\newcommand{\ra}{\rightarrow}
\newcommand{\lra}{\longrightarrow}
\newcommand{\into}{\hookrightarrow}
\newlength{\ownl}
\newcommand{\ndiv}{\nmid}
\newcommand{\Art}{{\operatorname{Art}\,}}
\newcommand{\Fil}{{\operatorname{Fil}\,}}
\newcommand{\Frob}{{\operatorname{Frob}}}
\newcommand{\Gal}{{\operatorname{Gal}\,}}
\newcommand{\Hom}{{\operatorname{Hom}\,}}
\newcommand{\Ind}{{\operatorname{Ind}\,}}
\newcommand{\Res}{{\operatorname{Res}}}
\newcommand{\WD}{{\operatorname{WD}}}
\newcommand{\Spec}{{\operatorname{Spec}\,}}
\newcommand{\ad}{{\operatorname{ad}\,}}
\newcommand{\gr}{{\operatorname{gr}\,}}
\newcommand{\rec}{{\operatorname{rec}}}
\newcommand{\diag}{{\operatorname{diag}}}
\newcommand{\Wdiag}{W^{\diag}}
\newcommand{\Wcris}{W^{\cris}}
\newcommand{\WBDJ}{W^{\operatorname{BDJ}}}
\newcommand{\WSch}{W^{\operatorname{Sch}}}
\newcommand{\WGHS}{W^{\operatorname{GHS}}}
\newcommand{\Wexplicit}{W^{\operatorname{explicit}}}
\newcommand{\cris}{{\operatorname{cris}}}
\newcommand{\ab}{{\operatorname{ab}}}
\newcommand{\univ}{{\operatorname{univ}}}
\newcommand{\A}{{\mathbb{A}}}
\newcommand{\C}{{\mathbb{C}}}
\newcommand{\F}{{\mathbb{F}}}
\newcommand{\Q}{{\mathbb{Q}}}
\newcommand{\R}{{\mathbb{R}}}
\newcommand{\Z}{{\mathbb{Z}}}
\newcommand{\CC}{{\mathcal{C}}}
\newcommand{\calD}{{\mathcal{D}}}
\newcommand{\CG}{{\mathcal{G}}}
\newcommand{\CO}{{\mathcal{O}}}
\newcommand{\CS}{{\mathcal{S}}}
\newcommand{\gothsl}{{\mathfrak{sl}}}
\newcommand{\gothgl}{{\mathfrak{gl}}}
\newcommand{\gm}{{\mathfrak{m}}}
\newcommand{\barF}{\overline{{F}}}
\newcommand{\barK}{\overline{{K}}}
\newcommand{\barFF}{\overline{{\F}}}
\newcommand{\barQQ}{\overline{{\Q}}}
\newcommand{\barr}{\overline{{r}}}
\newcommand{\tI}{\widetilde{{I}}}
\newcommand{\tS}{\widetilde{{S}}}
\newcommand{\tu}{\widetilde{{u}}}
\newcommand{\tv}{{\widetilde{{v}}}}
\newcommand{\epsilonbar    }{\overline{\epsilon}}
 \newcommand{\bartheta    }{\overline{\theta}}
 \newcommand{\barmu    }{\overline{\mu}}
 \newcommand{\barrho   }{{\overline{\rho}}}
 \newcommand{\tmu    }{\widetilde{\mu}}
\newcommand{\Qbar}{{\overline{\Q}}}
\newcommand{\Fbar}{{\overline{\F}}}
\def\RCS$#1: #2 ${\expandafter\def\csname RCS#1\endcsname{#2}}
\newcommand{\notdiv}{\nmid}
\newcommand{\To}{\longrightarrow}\newcommand{\m}{\mathfrak{m}}
\newcommand{\onto}{\twoheadrightarrow}
\newcommand{\isoto}{\stackrel{\sim}{\To}}
\newcommand{\bigO}{\mathcal{O}}
\newcommand{\Favoid}{F^{(\mathrm{avoid})}}
\newcommand{\thetabar}{\bar{\theta}}
\newcommand{\taubar}{\overline{\tau}}
\newcommand{\bb}{\mathbb} 
\newcommand{\mc}{\mathcal}
\newcommand{\mf}{\mathfrak}
\newcommand{\cA}{\mathcal{A}}
\newcommand{\cG}{\mathcal{G}}
\newcommand{\cO}{\mathcal{O}}
\DeclareMathOperator{\FL}{FL}
\newcommand{\rhobar}{\overline{\rho}} 
\newcommand{\chibar}{\overline{\chi}} 
\newcommand{\rbar}{\bar{r}}
\newcommand{\mubar}{\overline{\mu}}
\newcommand{\GL}{\operatorname{GL}}
\newcommand{\SO}{\operatorname{SO}}
\newcommand{\SU}{\operatorname{SU}}
\newcommand{\PGL}{\operatorname{PGL}}
\newcommand{\HT}{\operatorname{HT}}
 \newcommand{\Qp}{\Q_p}
\newcommand{\Ql}{\Q_l} 
\newcommand{\Qlbar}{{\overline{\Q}_{l}}}
\newcommand{\Fl}{{\F_l}}
\newcommand{\Flbar}{{\overline{\F}_l}}
\newcommand{\SL}{\operatorname{SL}}
\newcommand{\PSL}{\operatorname{PSL}}
\newcommand{\Sym}{\operatorname{Sym}}
 \newtheorem{ithm}{Theorem}
\newtheorem{thm}{Theorem}[subsection]
\newtheorem{cor}[thm]{Corollary}
 \newtheorem{lemma}[thm]{Lemma}
\newtheorem{lem}[thm]{Lemma} \newtheorem{prop}[thm]{Proposition}
\newtheorem{conj}[thm]{Conjecture} \theoremstyle{definition}
 \theoremstyle{definition}
\newtheorem{defn}[thm]{Definition} \theoremstyle{remark}
\newtheorem{rem}[thm]{Remark} 
\newtheorem{remark}[thm]{Remark} 
\numberwithin{equation}{subsection}
\theoremstyle{definition}
\newtheorem*{sublem}{Sublemma}
\begin{document}
\title{Serre weights for rank two unitary groups.}

\author{Thomas Barnet-Lamb}\email{tbl@brandeis.edu}\address{Department of Mathematics, Brandeis University}
\author{Toby Gee} \email{gee@math.northwestern.edu} \address{Department of
  Mathematics, Northwestern University}\author{David Geraghty}
\email{geraghty@math.princeton.edu}\address{Princeton University and
  Institute for Advanced Study}  \thanks{The second author was partially supported
  by NSF grant DMS-0841491, and the third author was partially supported
  by NSF grant DMS-0635607.}  \subjclass[2000]{11F33.}
\begin{abstract}We study the weight part of (a generalisation of)
  Serre's conjecture for mod $l$ Galois representations associated to
  automorphic representations on rank two unitary groups for odd
  primes $l$. We propose a conjectural set of Serre weights, agreeing
  with all conjectures in the literature, and under a mild assumption
  on the image of the mod $l$ Galois representation we are able to
  show that any modular representation is modular of each conjectured
  weight. We make no assumptions on the ramification or inertial
  degrees of $l$. Our main innovation is to make use of the lifting
  techniques introduced in \cite{blgg}, \cite{blggord} and
  \cite{BLGGT}.
\end{abstract}
\maketitle
\tableofcontents

\section{Introduction.}\label{sec:intro}

\subsection{} In recent years there has been considerable progress on
proving generalisations of the weight part of Serre's conjecture for
mod $l$ representations corresponding to automorphic representations
of $\GL_2$. Such a generalisation was initially formulated in
\cite{bdj}, for Hilbert modular forms over a totally real field $F^+$
in which $l$ is unramified, and was largely proved in \cite{geebdj}. A
generalisation of the conjecture of \cite{bdj} for tamely ramified
Galois representations was proposed in \cite{MR2430440}, and in the
case that $l$ is totally ramified in $F^+$ this conjecture was mostly
proved in \cite{geesavitttotallyramified}. In his forthcoming
University of Arizona PhD thesis, Ryan Smith uses essentially the same
argument to prove some cases when the inertial and ramification
indexes are both two.

While these results represent a considerable advance on our
understanding of 2-dimensional mod $l$ Galois representations, they
are limited in several respects. Firstly, it seems to be hopeless to
expect to be able to push the methods of proof to work over a general
totally real
field. This is not merely aesthetically unsatisfactory; it also limits
the applicability of the results, for example limiting the options of
combining them with base change techniques, or of applying them to
generalisations of the arguments of Khare and Wintenberger which
proved Serre's conjecture over $\Q$. Secondly, the techniques of
\cite{geebdj} do not allow one to prove results for all weights, but
only for weights which are sufficiently regular; in applications, for
example to modularity lifting theorems and the Breuil-Mezard
conjecture (cf. \cite{kisinICM}), one often needs a result for all
weights. Finally, the methods employed in these earlier papers entail
some exceedingly unpleasant combinatorial and $p$-adic Hodge theoretic
calculations.

In the present paper we resolve most of these difficulties, proving a
very general theorem about the weight part of Serre's conjecture for
rank two unitary groups. These groups are outer forms of $\GL_2$ over
totally real fields, as opposed to the inner forms studied in the
papers discussed above. We choose to use these groups for two
reasons. Firstly, we have developed a considerable body of material on
automorphy lifting theorems for these groups in our recent work
(\cite{blgg}, \cite{blggord}, \cite{BLGGT}). Secondly, the
relationship between the weights of mod $l$ Galois representations and
$l$-adic Galois representations is simpler than for the inner forms,
because there is no obstruction coming from the units in the
totally real field (this can already be seen for $\GL_1$: one has
considerably more flexibility to choose the weights of an algebraic
character over an imaginary CM field than over a totally real field).

Our main theorem is as follows (see Theorem \ref{thm: explicit local lifts implies Serre
    weight}). Given a modular representation $\rbar$, we define a set
  of Serre weights $\Wexplicit(\rbar)$, which is the set of predicted
  weights for $\rbar$ from the papers \cite{bdj}, \cite{MR2430440} and \cite{GHS}.
\begin{ithm}Let $F$ be an imaginary CM field with maximal totally real subfield
  $F^+$. Assume that $\zeta_l\notin F$, that $F/F^+$ is unramified at all finite places,
  that every place of $F^+$ dividing $l$ splits completely in $F$,
  and that $[F^+:\Q]$ is even. Suppose that $l>2$, and that
  $\rbar:G_F\to\GL_2(\Flbar)$ is an irreducible modular
  representation with split ramification. Assume that $\rbar(G_{F(\zeta_l)})$ is adequate.
 
 Let $a$ be a
  Serre weight. Assume that $a\in \Wexplicit(\rbar)$. Then $\rbar$ is
  modular of weight $a$. 
  \end{ithm}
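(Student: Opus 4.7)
The plan is to reduce the theorem, via the framework of \cite{BLGGT}, to producing suitable local crystalline lifts of $\rbar|_{G_{F_{\tilde{v}}}}$ at each place $\tilde{v}|l$, and then globalizing them by automorphy lifting. Concretely, to show that $\rbar$ is modular of weight $a$, it suffices to produce an automorphic $l$-adic lift $r:G_F\to\GL_2(\Qlbar)$ of $\rbar$ such that for every place $\tilde{v}|l$ of $F$ the restriction $r|_{G_{F_{\tilde{v}}}}$ is crystalline with Hodge-Tate weights prescribed by the local component $a_{\tilde{v}}$ of $a$. This is the standard dictionary between Serre weights and Hodge-Tate weights of crystalline lifts in the rank-two unitary setting, which is available precisely because $F/F^+$ is CM and all $v|l$ split in $F$, so that the weight of an automorphic form at $v$ can be read off from a single embedded local Galois representation without the obstruction from totally real units.

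The set $\Wexplicit(\rbar)$ is defined so that the condition $a\in\Wexplicit(\rbar)$ is, essentially by construction, equivalent to the existence, for each $\tilde{v}|l$, of a crystalline lift $r_{\tilde{v}}:G_{F_{\tilde{v}}}\to\GL_2(\cO)$ of $\rbar|_{G_{F_{\tilde{v}}}}$ with Hodge-Tate weights matching $a_{\tilde{v}}$. My first step is therefore to extract from the hypothesis $a\in\Wexplicit(\rbar)$ such a collection $\{r_{\tilde{v}}\}_{\tilde{v}|l}$ of explicit local lifts, and to make compatible minimal choices of local lifting data at all other places of ramification (possible because $\rbar$ has split ramification and we can work with the unitary-group setup).

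The core step is to globalize these local choices to a single automorphic lift. Starting from the modularity of $\rbar$ in some auxiliary Serre weight, I would apply the automorphy lifting machinery of \cite{blgg}, \cite{blggord}, and especially \cite{BLGGT}, in the ``lifting with prescribed local conditions'' form. A sequence of congruences and (if necessary) soluble base changes, combined with the existence of the $r_{\tilde{v}}$ as points on the appropriate local crystalline deformation rings, produces a global lift $r$ of $\rbar$ whose restriction to each $G_{F_{\tilde{v}}}$ lies in the prescribed component of the local deformation space. The adequacy of $\rbar(G_{F(\zeta_l)})$, the assumption $\zeta_l\notin F$, the evenness of $[F^+:\Q]$, and the splitting of all $v|l$ in $F/F^+$ are all exactly the hypotheses needed for the $R=\T$ theorems of \cite{BLGGT} to apply in this context, and for the necessary soluble base change and descent arguments to go through.

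The main obstacle is executing this globalization while preserving the prescribed crystalline behavior at every place above $l$ simultaneously and in the correct component of the potentially crystalline deformation ring; this is where the innovation advertised in the introduction lies, namely using the lifting technology of \cite{blgg}, \cite{blggord}, \cite{BLGGT} in place of the delicate $p$-adic Hodge-theoretic and combinatorial arguments of \cite{geebdj} and \cite{geesavitttotallyramified}. Once the global lift $r$ is constructed, its automorphy combined with the prescribed Hodge-Tate weights at $l$ exhibits an automorphic form of weight $a$ giving rise to $\rbar$, which is exactly the assertion that $\rbar$ is modular of weight $a$.
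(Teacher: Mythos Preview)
Your overall architecture is right, but there is a genuine gap at the heart of the argument: you never invoke the notion of a \emph{potentially diagonalizable} lift, and without it the globalization step fails.

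First, your description of $\Wexplicit(\rbar)$ is inaccurate. You say it is ``essentially by construction'' equivalent to the existence of crystalline lifts with the correct Hodge--Tate weights, but that is the definition of the larger set $\Wcris(\rbar)$. The set $\Wexplicit(\rbar)$ is defined more restrictively: by an explicit combinatorial condition on $\rhobar|_{I_K}$ when $\rhobar$ is irreducible (Definition~\ref{defn: Schein  niveau 2}), and by the existence of \emph{upper-triangular} crystalline lifts when $\rhobar$ is reducible (Definition~\ref{defn: GHS niveau 1}). The paper does not know how to prove the theorem for all of $\Wcris(\rbar)$; the gap between $\Wexplicit$ and $\Wcris$ is precisely the residual local problem left open.

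Second, and more seriously, the lifting theorems of \cite{BLGGT} that you appeal to do not produce automorphic lifts from arbitrary crystalline local data. Theorem~\ref{thm: existence of lifts, pot diag components} (and ultimately Theorem~\ref{diaglift}) require that the prescribed local lifts $\rho_{\tv}$ at places above $l$ be \emph{potentially diagonalizable}. The actual proof therefore has two separate ingredients: Proposition~\ref{prop: GHS weights are pot diag weights}, which verifies that the specific lifts coming from the definition of $\Wexplicit$ (extensions of two crystalline characters, or inductions of a crystalline character from the unramified quadratic extension) are potentially diagonalizable; and then Theorem~\ref{thm: potentially diagonalizable local lifts implies Serre weight}, which feeds these into the BLGGT machine. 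Your proposal collapses these into a single appeal to ``crystalline lifts in the correct component,'' but identifying that component and showing the explicit lifts lie on it is exactly the content you are missing.
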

(See Sections \ref{sec:Definitions} and \ref{sec:Conjectures} for any
unfamiliar terminology.) Note in particular that if $l\ge 7$, the
hypothesis that  $\rbar(G_{F(\zeta_l)})$ is adequate may be replaced
by the usual Taylor-Wiles assumption that $\rbar|_{G_{F(\zeta_l)}}$ is irreducible.

Our approach is related to that of \cite{geebdj}, in that we prove
that a mod $l$ Galois representation is modular of a given weight by
producing $l$-adic lifts with certain properties. In \cite{geebdj} we
were forced to work with potentially Barsotti-Tate lifts, due to our
dependence on the modularity lifting theorems proved in \cite{kis04}
and \cite{MR2280776}. This led to much of the combinatorial
difficulties mentioned above, which in turn limited us to working over
a totally real field in which $l$ is unramified. Thanks to the
techniques developed in our previous papers, and in particular the
lifting theorems proved in \cite{BLGGT}, in the present paper we are
able to produce lifts of arbitrary weight. This completely removes the
combinatorial difficulties, as we now explain.

Let $F$ be an imaginary CM field with maximal totally real subfield
$F^+$. Assume that $F/F^+$ is unramified at all finite places and
split at all places lying over $l$, and that $[F^+:\Q]$ is even. In
section \ref{sec:Definitions} below we define a certain rank two
unitary group $G$ over $F^+$, which is compact at all infinite places and
quasisplit at all finite places, and split over $F$. It is thus split
at all places dividing $l$, so there is a natural notion of a Serre
weight $a$, which is an irreducible representation of the product of
the $\GL_2(k_v)$, where $v$ runs over the places of $F$ dividing
$l$. We have a notion of an irreducible mod $l$ Galois representation
$\rbar:G_F\to\GL_2(\Flbar)$ being modular of some Serre weight, in
terms of algebraic modular forms on $G$. An elementary, but extremely
useful, fact is that any Serre weight $a$ can be lifted to a
characteristic $0$ weight $\lambda$ (that is, to an irreducible
algebraic representation of $\GL_2(\cO_{F^+,l})$). Since $G$ is
compact, it is easy to check that $\rbar$ being modular of weight $a$
is equivalent to $\rbar$ having a lift which corresponds to an
automorphic representation of weight $\lambda$ and level prime to $l$,
and by the theory of base change this is equivalent to $\rbar$ having
a lift which corresponds to a conjugate-self dual automorphic
representation of $\GL_2(\A_F)$ of weight $\lambda$ and level prime to
$l$.

The weight part of Serre's conjecture thus reduces to a question about
the existence of automorphic lifts of $\rbar$ with specific local
properties; the condition that the corresponding automorphic
representation has weight $\lambda$ and level prime to $l$ translates
to the condition that the Galois representation be crystalline with
Hodge-Tate weights determined by $\lambda$. This gives an obvious
necessary condition for $\rbar$ to be modular of weight $a$: for each
place $v|l$ of $F$, $\rbar|_{G_{F_v}}$ must have a crystalline lift of
the appropriate Hodge-Tate weights. Following \cite{gee061}, we
conjecture that this condition is also sufficient.

Our main result in this direction is that, subject to mild hypotheses
on the image $\rbar(G_F)$, if $\rbar$ is assumed to be modular and if for each place $v|l$ of $F$,
$\rbar|_{G_{F_v}}$ has a potentially diagonalizable crystalline lift
of the appropriate Hodge-Tate weights, then $\rbar$ is modular of
weight $a$. We refer the reader to section \ref{sec:A lifting theorem}
for the definition of the term ``potentially diagonalizable'', which
was introduced in \cite{BLGGT}. This result is a straightforward
consequence of the above discussion and the results of
\cite{BLGGT}, together with the results of \cite{kis04} and
\cite{MR2280776} (which show that $\rbar$ necessarily has some
automorphic lift which is potentially diagonalizable).

Since we do not know if every crystalline representation is
potentially diagonalizable, it is not immediately clear how useful the
above result is. Accordingly, we examine the explicit conjectures made
in \cite{bdj}, \cite{MR2430440} and \cite{GHS}, and note that in
(almost) every case, whenever
the conjectures made in those papers suggest that $\rbar$ should be
modular of weight $a$, we can find potentially diagonalizable
crystalline lifts of the correct Hodge-Tate weights. Indeed, we can
find potentially diagonalizable lifts of a particularly simple kind:
they are either an extension of two characters, or are induced from a
character.

Accordingly, we have reduced the weight part of Serre's conjecture in
this setting to a purely local question, of determining whether if a
mod $l$ Galois representation has a crystalline lift with specified
Hodge-Tate weights (constrained to lie in a particular range), it has
one which is furthermore potentially diagonalizable. We strongly
suspect that this question has an affirmative answer. In the
2-dimensional cases at hand, this is presumably accessible via a brute
force calculation in integral $p$-adic Hodge theory. We have not
attempted such a calculation, as we expect that it would be lengthy
and unenlightening. We do, however, completely determine the list of
weights when the absolute ramification index of each prime $v$ of $F$
dividing $l$ is at least $l$, and for each such $v$ the representation
$\rbar|_{G_{F_v}}$ is semisimple. Note that one can always reduce to
this case by base change, which may make this result particularly
valuable in applications.
We remark that some of the above discussion carries over to rank $n$
unitary groups for arbitrary $n$. However, there are several
difficulties with obtaining results as strong as those obtained
here. Firstly, the correspondence between weights in characteristic 0
and characteristic $l$ is less simple: there are irreducible
$\Flbar$-representations of $\GL_n(\F_{l})$ which do not lift to
irreducible $\Qlbar$-representations. Secondly, we do not know that
every modular $\rbar$ has an automorphic lift which is potentially
diagonalizable. Nonetheless, our methods give non-trivial results for
general $n$, which we will explain in a subsequent paper.

We now explain the structure of this paper. In section
\ref{sec:Definitions}, we define the unitary groups that we use, and
recall some basic facts about the automorphic representations and Galois
representations that we use. In section \ref{sec:A lifting theorem} we
deduce the main lifting theorem that we need from the results of
\cite{BLGGT}. In section \ref{sec:Conjectures} we explain the explicit
Serre weight
conjectures in the literature, and write down various explicit
potentially diagonalizable representations. In Section \ref{sec:main
  theorem} we deduce our main explicit theorems. Finally, in Appendix
\ref{app:adequacy} we discuss the adequate subgroups of
$\GL_2(\Flbar)$ for $l=3$ and $l=5$, and we improve on a result of
\cite{BLGGT}; this section allows us to treat the cases $l=3$, $5$ in
this paper, whereas a direct appeal to the results of \cite{BLGGT}
would force us to assume that $l\ge 7$.

We would like to thank Florian Herzig for his helpful comments on an
earlier draft of this paper.

\subsection{Notation and conventions}If $M$ is a field, we let $G_M$ denote its absolute Galois group.  We
write all matrix transposes on the left; so ${}^tA$ is the transpose
of $A$. Let $\epsilon_l$ denote the $l$-adic cyclotomic character, and
$\bar{\epsilon}_l$ or $\omega_l$ the mod $l$ cyclotomic character. If $M$ is
a finite extension of $\bb{Q}_p$ for some prime $p$, we write $I_M$ for the
inertia subgroup of $G_M$. If $M$ and $K$ are algebraic extensions of
$\Q_p$, then all homomorphisms $M\to K$ are assumed to be continuous
for the $p$-adic topology. If $R$ is a local ring we write
$\mf{m}_{R}$ for the maximal ideal of $R$.

If $K$ is a finite extension of $\Qp$, we will let $\rec_K$ be the local Langlands correspondence of
\cite{ht}, so that if $\pi$ is an irreducible
admissible complex representation of $\GL_n(K)$, then $\rec_K(\pi)$ is a
Weil-Deligne representation of the Weil group $W_K$. We will write $\rec$ for $\rec_K$
when the choice of $K$ is clear. We write $\Art_K:K^\times\to W_K$ for
the isomorphism of local class field theory, normalised so that
uniformisers correspond to geometric Frobenius elements. If $(V,r,N)$
is a Weil-Deligne representation of $W_K$ over some algebraically
closed field of characteristic zero, then we define its Frobenius
semisimplification $(V,r,N)^{F-ss}$ (resp. its semisimplification
$(V,r,N)^{ss}$) as in section 1 of \cite{ty}.

Let $W$ be a
continuous finite-dimensional representation of $G_K$ over
$\Qlbar$ for some prime $l$. If $p=l$, assume that $W$ is de Rham. Then we denote by
$\WD(W)$ the Weil-Deligne representation associated to $W$. Assume
now that $p=l$. If $\tau:K \into \barQQ_l$ is a continuous embedding, then by definition the multiset
$\HT_\tau(W)$ of Hodge-Tate weights of $W$ with respect to $\tau$ contains $i$ with multiplicity $\dim_{\barQQ_l} (W
\otimes_{\tau,K} \widehat{\barK}(i))^{G_K} $. Thus for example
$\HT_\tau(\epsilon_l)=\{ -1\}$.

\section{Definitions}\label{sec:Definitions}\subsection{}Let $l>2$ be a prime, and let $F$ be an imaginary CM field with
maximal totally real field subfield $F^+$. We assume throughout this
paper that:
\begin{itemize}
\item $F/F^+$ is unramified at all finite places.
\item Every place $v|l$ of $F^+$ splits in $F$.
\item $[F^+:\Q]$ is even.
\end{itemize}
Under these hypotheses, there is a reductive algebraic group $G/F^+$
with the following properties:
\begin{itemize}
\item $G$ is an outer form of $\GL_2$, with $G_{/F}\cong\GL_{2/F}$.
\item If $v$ is a finite place of $F^+$, $G$ is quasi-split at $v$.
\item If $v$ is an infinite place of $F^+$, then $G(F^+_v)\cong U_2(\R)$.
\end{itemize}
To see that such a group exists, one may argue as follows. Let $B$
denote the matrix algebra $M_2(F)$. An involution $\ddag$ of the
second kind on $B$ gives a reductive group $G_\ddag$ over $F^+$ by
setting \[G_\ddag(R)=\{g\in B\otimes_{F^+} R:g^\ddag g=1\}\] for any
$F^+$-algebra $R$. Any such $G_\ddag$is an outer form of $\GL_2$,
with $G_{\ddag/F}\cong\GL_{2/F}$. One can choose $\ddag$ such that
\begin{itemize}
 \item If $v$ is a finite place of $F^+$, $G_\ddag$ is quasi-split at $v$.
\item If $v$ is an infinite place of $F^+$, then $G_\ddag(F^+_v)\cong U_2(\R)$.
\end{itemize}
To see this, one uses the argument of Lemma I.7.1 of \cite{ht}; it is
here that we require the hypotheses that $F/F^+$ is unramified at all
finite places, and that $[F^+:\Q]$ is even. We then fix some choice of
$\ddag$ as above, and take $G=G_\ddag$.

As in section 3.3 of \cite{cht} we define a model for $G$ over
$\cO_{F^+}$ in the following way. We choose an order $\cO_B$ in $B$
such that $\cO_B^\ddag=\cO_B$, and $\cO_{B,w}$ is a maximal order in
$B_w$ for all places $w$ of $F$ which are split over $F^+$ (see
section 3.3 of \cite{cht} for a proof that such an order exists). Then
we can define $G$ over $\cO_{F^+}$ by setting \[G(R)=\{g\in
\cO_B\otimes_{\cO_{F^+}} R:g^\ddag g=1\}\] for any $\cO_{F^+}$-algebra $R$.

If $v$ is a place of $F^+$ which splits as $ww^c$ over $F$, then we
choose an isomorphism \[\iota_v:\cO_{B,v}\isoto
M_2(\cO_{F,v})=M_2(\cO_{F_w})\oplus M_2(\cO_{F_{w^c}})\] such that
$\iota_v(x^\ddag)={}^t\iota_v(x)^c$. This gives rise to an
isomorphism \[\iota_w:G(\cO_{F_v^+})\isoto \GL_2(\cO_{F_w})\] sending
$\iota_v^{-1}(x,{}^tx^{-c})$ to $x$.

Let $K$ be an algebraic extension of $\Ql$ in $\Qlbar$ which contains
the image of every embedding $F\into\Qlbar$, let $\cO$ denote the ring
of integers of $K$, and let $k$ denote the residue field of $K$. Let
$S_l$ denote the set of places of $F^+$ lying over $l$, and for each
$v\in S_l$ fix a place $\tv$ of $F$ lying over $v$. Let $\tilde{S}_l$
denote the set of places $\tilde{v}$ for $v\in S_l$.

Let $W$ be an $\cO$-module with an action of $G(\cO_{F^+,l})$, and let
$U\subset G(\A_{F^+}^\infty)$ be a compact open subgroup with the
property that for each $u\in U$, if $u_l$ denotes the projection of
$u$ to $G(F_l^+)$, then $u_l\in G(\cO_{F^+_l})$. Let $S(U,W)$ denote
the space of algebraic modular forms on $G$ of level $U$ and weight
$W$, i.e. the space of functions \[f:G(F^+)\backslash
G(\A_{F^+}^\infty)\to W\] with $f(gu)=u_l^{-1}f(g)$ for all $u\in U$.

Let $\tI_l$ denote the set of embeddings $F\into K$ giving rise to a
place in $\tS_l$. For any $\tv\in\tS_l$, let $\tI_\tv$ denote the set
of elements of $\tI_l$ lying over $\tv$. We can naturally identify
$\tI_\tv$ with $\Hom(F_\tv,\Qlbar)$. Let $\Z^2_+$ denote the set of pairs $(\lambda_1,\lambda_2)$ of
integers with $\lambda_1\ge \lambda_2$. If $\Omega$ is an algebraically closed field of
characteristic 0 we write $(\Z^2_+)_0^{\Hom(F,\Omega)}$ for the subset of elements
    $\lambda\in(\Z^2_+)^{\Hom(F,\Omega)}$ such
    that \[\lambda_{\tau,1}+\lambda_{\tau\circ c,2}=0\] for all $\tau$.
Note that we can identify $(\Z^2_+)_0^{\Hom(F,\Qlbar)}$ with $(\Z^2_+)^{\tI_l}$ in a natural fashion.
If $\lambda$ is an element of $(\Z^2_+)^{\tI_l}$ (resp.\
$(\Z^2_+)^{\Hom(F,\Qlbar)}$) and $w \in \tS_l$
(resp.\ $w|l$) is a place of $F$, we define $\lambda_w\in
(\Z^2_+)^{\Hom(F_w,K)}$ to be $(\lambda_\sigma)_\sigma$ with
$\sigma$ running over all embeddings $F\into K$ inducing $w$.

If $w|l$ is a place of $F$ and $\lambda\in(\Z^2_+)^{\Hom(F_w,\Qlbar)}$, let $W_\lambda$
be the free $\cO$-module with an action of $\GL_2(\cO_{F_w})$ given
by \[W_\lambda:=\otimes_{\tau\in\Hom(F_w,\Qlbar)}\det{}^{\lambda_{\tau,2}}\otimes\Sym^{\lambda_{\tau,1}-\lambda_{\tau,2}}\cO_{F_w}^2\otimes_{\cO_{F_w},\tau}\cO.\]
If $v=w|_{F^+}$, we give this an action of $G(\cO_{F^+,v})$ via $\iota_w$.
If $\lambda\in(\Z^2_+)^{\tI_l}$, we let $W_\lambda$
be the free $\cO$-module with an action of $G(\cO_{F^+,l})$ given
by \[W_\lambda:=\otimes_{\tv\in\tS_l} W_{\lambda_\tv}.\]

If $A$ is an $\cO$-module we let \[S_\lambda(U,A):=S(U,W_\lambda\otimes_\cO A).\]

For any compact open subgroup $U$ as above of $G(\A_{F^+}^\infty)$ we may write
$G(\A_{F^+}^\infty)=\coprod_i G(F^+)t_i U$ for some finite set
$\{t_i\}$. Then there is an isomorphism \[S(U,W)\to\oplus_i W^{U\cap
  t_i^{-1}G(F^+)t_i}\]given by $f\mapsto (f(t_i))_i$. 
We say that
$U$ is \emph{sufficiently small} if for some finite place $v$ of $F^+$
the projection of $U$ to $G(F^+_v)$ contains no element of finite
order other than the identity. Suppose that $U$ is sufficiently
small. Then for each $i$ as above we have
$U\cap t_i^{-1}G(F^+)t_i=\{1\}$, so taking $W=W_\lambda\otimes_\cO A$ we see
that for any $\cO$-algebra $A$, we
have \[S_\lambda(U,A)\cong S_\lambda(U,\cO)\otimes_\cO A.\]
We note when $U$ is not sufficiently small, we still have $S_\lambda(U,A)\cong
S_\lambda(U,\cO)\otimes_\cO A$ whenever $A$ is $\cO$-flat.

We now recall the relationship between our spaces of algebraic
modular forms and the space of automorphic forms on $G$. Write
$S_\lambda(\Qlbar)$ for the direct limit of the spaces $S_\lambda(U,\Qlbar)$ over compact
open subgroups $U$ as above (with the transition maps being the obvious
inclusions $S_\lambda(U,\Qlbar)\subset S_\lambda(V,\Qlbar)$ whenever $V\subset
U$). Concretely, $S_\lambda(\Qlbar)$ is the set of
functions \[f:G(F^+)\backslash G(\A_{F^+}^\infty)\to W_\lambda\otimes_\cO\Qlbar\]
such that there is a compact open subgroup $U$ of
$G(\A_{F^+}^{\infty,l})\times G(\cO_{F^+,l})$
with \[f(gu)=u_l^{-1}f(g)\] for all $u\in U$, $g\in G(\A_{F^+}^\infty)$. This
space has
a natural left action of $G(\A_{F^+}^\infty)$ via \[(g\cdot
f)(h):=g_lf(hg).\]

Fix an isomorphism $\imath:\Qlbar\isoto\C$. For each embedding
$\tau:F^+\into\R$, there is a unique embedding $\tilde{\tau}:F\into\C$
extending $\tau$ such that $\imath^{-1}\tilde{\tau}\in\tI_l$. Let $\sigma_{\imath\lambda}$ denote the representation of $G(F_\infty^+)$ given by
$W_\lambda\otimes_\cO\Qlbar\otimes_{\Qlbar,\imath}\C$, with an element $g\in
G(F^+_\infty)$ acting via
$\otimes_\tau\tilde{\tau}(\iota_{\tilde{\tau}}(g))$. Let $\cA$ denote
the space of automorphic forms on $G(F^+)\backslash G(\A_{F^+})$. From
the proof of Proposition 3.3.2 of \cite{cht}, one easily obtains the following.

\begin{lem}
  \label{lem: relationship of algebraic automorphic forms to classical
    automorphic forms} There is an isomorphism of
  $G(\A_{F^+}^\infty)$-modules \[\imath S_\lambda(\Qlbar)\isoto\Hom_{G(F^+_\infty)}(\sigma_{\imath\lambda}^\vee,\cA).\]
\end{lem}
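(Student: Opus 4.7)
The plan is to follow the template of Proposition 3.3.2 of \cite{cht}, adapted to our $\GL_2$ setting, by constructing explicit mutually inverse $G(\A_{F^+}^\infty)$-equivariant maps. The essential inputs are the compactness of $G(F^+_\infty)$ and the fact that the $G(\cO_{F^+,l})$-representation $W_\lambda$ is the restriction of an algebraic representation of the group scheme $G_{/F^+}$; this algebraicity is what bridges the $l$-adic and archimedean pictures.

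For the forward map $\Phi:\imath S_\lambda(\Qlbar)\to\Hom_{G(F^+_\infty)}(\sigma_{\imath\lambda}^\vee,\cA)$, given $f\in\imath S_\lambda(\Qlbar)$ I will first pass to a Gross-style form by setting $\tif(g^\infty) := g_l \cdot \imath f(g^\infty)$, using the extension of the $G(\cO_{F^+,l})$-action on $W_\lambda$ to a $G(F^+_l)$-action on $W_\lambda \otimes_\cO \Qlbar \otimes_{\Qlbar,\imath} \C$. Under this twist the relations $f(\gamma g) = f(g)$ and $f(gu) = u_l^{-1}f(g)$ become $\tif(\gamma g) = \gamma_l \cdot \tif(g)$ and $\tif(gu) = \tif(g)$. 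Next I will extend $\tif$ to a $\sigma_{\imath\lambda}$-valued function $\tilde F$ on $G(\A_{F^+})$ by $\tilde F(g_\infty g^\infty) := \sigma_{\imath\lambda}(g_\infty)^{-1}\cdot \tif(g^\infty)$, and for $v^\vee\in\sigma_{\imath\lambda}^\vee$ set $\Phi(f)(v^\vee)(g) := \langle v^\vee, \tilde F(g)\rangle$. The crucial verification is left $G(F^+)$-invariance of $\tilde F$: it reduces precisely to the assertion that for $\gamma\in G(F^+)$, the actions of $\gamma_\infty$ on $\sigma_{\imath\lambda}$ and of $\gamma_l$ on $W_\lambda\otimes_\cO\Qlbar$ correspond under $\imath$, which holds because both arise from the same algebraic representation of $G_{/F^+}$ (the compatibility is built into the formula $g\mapsto\otimes_\tau\tilde\tau(\iota_{\tilde\tau}(g))$). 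The remaining automorphy conditions come for free: $K$- and $\cZ(\gog)$-finiteness from finite-dimensionality of $\sigma_{\imath\lambda}$ and compactness of $G(F^+_\infty)$, and moderate growth from the finiteness of the double coset $G(F^+)\backslash G(\A_{F^+}^\infty)/U$. A direct unwinding then shows $\Phi(f)$ is $G(F^+_\infty)$-equivariant.

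For the inverse, given $\psi\in\Hom_{G(F^+_\infty)}(\sigma_{\imath\lambda}^\vee,\cA)$, I will define $\tilde F(g^\infty) \in \sigma_{\imath\lambda}\cong\sigma_{\imath\lambda}^{\vee\vee}$ as the element dual to the functional $v^\vee\mapsto\psi(v^\vee)(g^\infty)$, and then undo the twist to recover $f$; the $G(F^+_\infty)$-equivariance of $\psi$ is exactly what produces the required transformation properties. Checking that these assignments are mutually inverse and that $G(\A_{F^+}^\infty)$-equivariance holds on both sides is then routine, since all manipulations only modify the $l$-adic and archimedean components. The one place where the argument genuinely has content is the algebraic compatibility between $\gamma_\infty$ and $\gamma_l$ for $\gamma\in G(F^+)$, so I expect this to be the main obstacle to write out carefully.
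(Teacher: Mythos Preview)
Your proposal is correct and follows exactly the approach the paper indicates: the paper's proof consists solely of the sentence ``From the proof of Proposition 3.3.2 of \cite{cht}, one easily obtains the following,'' and you have spelled out that adaptation in detail, correctly identifying the algebraic compatibility between the $l$-adic and archimedean actions of $G(F^+)$ as the substantive point.
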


In particular, we note that $S_\lambda(\Qlbar)$ is a semisimple admissible $G(\A_{F^+}^\infty)$-module.

We now recall from \cite{cht} the notion of a RACSDC automorphic
representation. We say that an automorphic
representation $\pi$ of $\GL_2(\A_F)$ is
\begin{itemize}
\item {\it regular algebraic} if $\pi_\infty$ has the same infinitesimal character as some
  irreducible algebraic representation of $\Res_{F/\Q}\GL_2$;
\item {\it conjugate self dual} if $\pi^c\cong \pi^\vee$.
\end{itemize}
If $\pi$ satisfies both of these properties and is also cuspidal, we
well say that $\pi$ is {\it RACSDC} (regular, algebraic, conjugate self dual and cuspidal).
We say that $\pi$ {\it has level prime to} $l$ if $\pi_v$ is unramified for
all $v|l$. 

If $\lambda\in  (\Z^2_+)^{\Hom(F,\C)}$ we write $\Sigma_\lambda$ for the
irreducible algebraic representation of $\GL_2^{\Hom(F,\C)}\cong \Res_{F/\Q}\GL_2\times_\Q\C$ given by
the tensor product over $\tau$ of the irreducible representations with
highest weights $\lambda_\tau$; i.e. of the
representations \[\det{}^{\lambda_{\tau,2}}\otimes\Sym^{\lambda_{\tau,1}-\lambda_{\tau,2}}\C^2.\]
We say that a RACSDC automorphic representation $\pi$ of
$\GL_2(\A_F)$ {\it has weight} $\lambda\in(\Z^2_+)^{\Hom(F,\C)}$ if $\pi_\infty$ has the same
infinitesimal character as $\Sigma_\lambda^\vee$. If this is the case then
necessarily $\lambda\in(\Z^2_+)_0^{\Hom(F,\C)}$.

\begin{thm}
  \label{thm: existence of Galois reps attached to RACSDC}

If $\pi$ is a RACSDC automorphic representation of $\GL_2(\A_F)$ of
weight $\lambda$, then there is
a continuous irreducible
representation \[r_{l,\imath}(\pi):G_F\to\GL_2(\Qlbar)\] such that
\begin{enumerate}
\item $r_{l,\imath}(\pi)^c\cong
  r_{l,\imath}(\pi)^\vee\otimes\epsilon_l^{-1}$.
\item The representation $r_{l,\iota}(\pi)$ is de Rham, and is
  crystalline if $\pi$ has level prime to $l$. If $\tau:F\into\Qlbar$
  then \[\HT_\tau(r_{l,\imath}(\pi))=\{\lambda_{\imath\tau,1}+1,\lambda_{\imath\tau,2}\}.\]
\item If $v\nmid l$ then \[\imath\WD(r_{l,\imath}(\pi)|_{G_{F_v}})^{F-ss}\cong
  \rec(\pi_v^\vee\otimes|\det|^{-1/2}).\]
\item If $v|l$ then \[\imath\WD(r_{l,\imath}(\pi)|_{G_{F_v}})^{ss}\cong
  \rec(\pi_v^\vee\otimes|\det|^{-1/2})^{ss}.\]
\end{enumerate}

\end{thm}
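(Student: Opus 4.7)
The plan is that for $n=2$ and under our hypotheses on $F$, this theorem assembles results from the literature on Galois representations attached to regular algebraic conjugate self dual cuspidal automorphic representations of $\GL_n$ over CM fields; the task is to cite the correct construction and verify that conventions match. First I would use the conjugate self-duality of $\pi$ to descend it to an automorphic representation $\Pi$ of the unitary group $G/F^+$ defined in Section \ref{sec:Definitions}: the running hypotheses on $F$ (CM, $F/F^+$ unramified at finite places, $[F^+:\Q]$ even, places above $l$ split in $F/F^+$) are precisely those used in \cite{cht}, and $G$ is quasi-split at every finite place and compact at infinity, so such a $\Pi$ exists and has base change $\pi$ to $\GL_2(\A_F)$.

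I would then construct $r_{l,\imath}(\pi)$ by cutting out the Hecke-eigenspace corresponding to $\Pi$ in the $l$-adic \'etale cohomology of a unitary Shimura variety whose associated PEL datum has quasi-split inner form matching that of $G$, following \cite{ht} and \cite{cht}. Property (1), conjugate self-duality up to the cyclotomic twist, then follows from Poincar\'e duality on this Shimura variety, which pairs the realizations of $\pi$ and $\pi^{c,\vee}\cong\pi$ up to a Tate twist. Property (3) is the local-global compatibility theorem at places $v \nmid l$ (Taylor--Yoshida, Shin, Caraiani). For (2) and (4), I would apply Faltings' comparison theorem: de Rham-ness is automatic for $l$-adic \'etale cohomology of a smooth proper variety, and the Hodge-Tate weights $\{\lambda_{\imath\tau,1}+1,\lambda_{\imath\tau,2}\}$ are read off from the highest weight of the automorphic bundle on the Shimura variety (the shift by $1$ on the first coordinate reflects the convention $\HT_\tau(\epsilon_l)=\{-1\}$). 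When $\pi$ has level prime to $l$ the Shimura variety has good reduction at the prime below $v$, so the relevant piece of cohomology is crystalline. The semisimplified local-global compatibility at $v \mid l$ in (4) is also in \cite{cht} (the stronger Frobenius-semisimple statement is due to Caraiani but is not needed here).

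The hard part will be purely one of bookkeeping: matching the normalizations of $\rec$, of $\imath$, of the twist $|\det|^{-1/2}$, of the sign in the cyclotomic twist in (1), and of the Hodge-Tate weight convention, which differ slightly across sources. A useful sanity check is that, for $r=r_{l,\imath}(\pi)$, the Hodge-Tate weights of $r^c$ at $\tau$ are $\{\lambda_{\imath\tau c,1}+1,\lambda_{\imath\tau c,2}\}=\{-\lambda_{\imath\tau,2}+1,-\lambda_{\imath\tau,1}\}$ by the constraint $\lambda_{\tau,1}+\lambda_{\tau c,2}=0$, and these match the weights of $r^\vee\otimes\epsilon_l^{-1}$; this is why the twist in (1) is by $\epsilon_l^{-1}$. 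Once the conventions are pinned down, every assertion in the theorem, for $n=2$ under our hypotheses on $F$, appears in the literature and no new ingredient is needed.
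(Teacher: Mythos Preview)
Your proposal is not what the paper does. The paper's proof is a single sentence: ``This follows immediately from the main results of \cite{chenevierharris}, \cite{ana} and \cite{blggtlocalglobalII}.'' The theorem is quoted as a known result, with Chenevier--Harris supplying the existence and basic properties of $r_{l,\imath}(\pi)$, and Caraiani and \cite{blggtlocalglobalII} supplying the local--global compatibility statements (3) and (4). No construction is reproduced.

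Your sketch, by contrast, tries to outline the actual construction, and there are a couple of genuine slips. First, the group $G$ of Section~\ref{sec:Definitions} is compact at every infinite place, so its ``Shimura variety'' is a finite set of points; descending $\pi$ to $\Pi$ on this $G$ gives you algebraic modular forms (which is what the paper uses $G$ for) but no useful \'etale cohomology. The Shimura varieties that carry $r_{l,\imath}(\pi)$ come from unitary groups with at least one non-compact archimedean place (signature $(1,n-1)$ at one real place in \cite{ht}), and transferring $\pi$ there is a separate endoscopic step you have not addressed. Second, the direct cohomological construction does not cover all RACSDC $\pi$ on $\GL_n$ without extra hypotheses; the point of \cite{chenevierharris} is precisely to remove these via $p$-adic interpolation and congruences, so a purely geometric argument along the lines you indicate would not, by itself, prove the theorem in the stated generality. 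Finally, the attribution of (4) to \cite{cht} is off: semisimplified local--global compatibility at $v\mid l$ is the content of \cite{ana} and \cite{blggtlocalglobalII}, not of Clozel--Harris--Taylor.
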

\begin{proof}
  This follows immediately from the main results of
  \cite{chenevierharris}, \cite{ana} and \cite{blggtlocalglobalII}.
\end{proof}

After conjugating, we may assume that $r_{l,\imath}(\pi)$ takes values
in $\GL_2(\CO_\Qlbar)$. Composing with the map
$\GL_2(\CO_\Qlbar)\to\GL_2(\Flbar)$ and semisimplifying, we obtain a
representation $\rbar_{l,\imath}(\pi) : G_F \to \GL_2(\Flbar)$ which is
independent of any choices made.

We say that a continuous irreducible representation
$r:G_F\to\GL_2(\Qlbar)$ (respectively $\rbar:G_F\to\GL_2(\Flbar)$) is
{\it automorphic} if $r\cong r_{l,\imath}(\pi)$ (respectively $\rbar\cong\rbar_{l,\imath}(\pi)$) for some RACSDC
representation $\pi$ of $\GL_2(\A_F)$. We say that a continuous irreducible representation
$r:G_F\to\GL_2(\Qlbar)$ is {\it automorphic of weight} $\lambda\in
(\Z^2_+)^{\Hom(F,\Qlbar)}$ if $r\cong r_{l,\imath}(\pi)$ for some RACSDC
representation $\pi$ of $\GL_2(\A_F)$ of weight $\imath
\lambda$. By Th\'eor\`eme 3.13 of
\cite{MR1044819}, these notions do not depend on the choice of $\imath$.

The theory of base change gives a close relationship between
automorphic representations of $G(\A_{F^+})$ and automorphic representations of
$\GL_2(\A_F)$. For example, one has the
following consequences of Corollaire 5.3 and Th\'{e}or\`eme 5.4 of
\cite{labesse}.
\begin{thm}\label{thm: base change from GL to G}
  Suppose that $\pi$ is a RACSDC representation of $\GL_2(\A_F)$ of
  weight $\lambda\in(\Z_+^2)_0^{\Hom(F,\C)}$. Then there is an automorphic
  representation $\Pi$ of $G(\A_{F^+})$ such that
  \begin{enumerate}
  \item For each embedding $\tau:F^+\into\R$ and each
    $\hat{\tau}\into\C$ extending $\tau$, we have $\Pi_\tau\cong
    \Sigma_{\lambda_{\hat{\tau}}}^\vee\circ\iota_{\hat{\tau}}$.
  \item If $v$ is a finite place of $F^+$ which splits as $ww^c$ in
    $F$, then $\Pi_v\cong \pi_w\circ\iota_w$.
  \item If $v$ is a finite place of $F^+$ which is inert in $F$, and
    $\pi_v$ is unramified, then $\Pi_v$ has a fixed vector for some
    hyperspecial maximal compact subgroup of $G(F^+_v)$.
  \end{enumerate}

\end{thm}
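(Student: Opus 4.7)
The plan is to appeal directly to Labesse's stable base change results for the quasi-split unitary group, followed by a Jacquet--Langlands style transfer to the compact-at-infinity inner form $G$. Since $\pi$ is RACSDC, the conjugate self-duality $\pi^c \cong \pi^\vee$ is the precise hypothesis needed to place $\pi$ in the image of stable base change from the quasi-split unitary group $U_2^*$ over $F^+$ associated to the extension $F/F^+$. Applying Corollaire 5.3 of \cite{labesse} then produces an automorphic representation $\pi^+$ of $U_2^*(\A_{F^+})$ whose weak base change to $\GL_2(\A_F)$ is $\pi$; the regularity of $\pi_\infty$ forces the archimedean components of $\pi^+$ to be discrete series with the correct infinitesimal character.

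The next step is to transfer $\pi^+$ from $U_2^*$ to our inner form $G$. Since $G$ is quasi-split at every finite place of $F^+$, it coincides with $U_2^*$ away from infinity, so only the archimedean components need to be adjusted. Th\'eor\`eme 5.4 of \cite{labesse} carries out exactly this transfer: at each real place $v$, the discrete series component of $\pi^+_v$ is replaced by the unique finite dimensional representation of $G(F_v^+) = U_2(\R)$ with the same infinitesimal character, which after chasing through the identification $\iota_{\hat\tau}$ is precisely $\Sigma_{\lambda_{\hat\tau}}^\vee \circ \iota_{\hat\tau}$. This gives property (1) directly.

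Properties (2) and (3) are then read off from the local description of the base change map. At a split finite place $v = ww^c$ of $F^+$, the local transfer is the tautological identification $G(F_v^+) \isoto \GL_2(F_w)$ induced by $\iota_w$, so $\Pi_v \cong \pi_w \circ \iota_w$. At an inert finite place $v$ with $\pi_v$ unramified, unramified local base change sends the unramified principal series of $G(F_v^+)$ to unramified representations of $\GL_2(F_w)$; unfolding this, $\Pi_v$ itself must be spherical, and hence has a fixed vector for some hyperspecial maximal compact subgroup of $G(F_v^+)$.

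The main obstacle is essentially absorbed into the statements of Corollaire 5.3 and Th\'eor\`eme 5.4 of \cite{labesse}, whose proofs rely on delicate applications of the stable trace formula together with character identities at archimedean places and endoscopic transfer. On our side, what must be checked is only that the global hypotheses on $\pi$ (cuspidality, regular algebraicity, conjugate self-duality) together with our running assumptions on $F/F^+$ (unramifiedness at finite places, $[F^+:\Q]$ even, ensuring that $G$ is quasi-split at every finite place and compact at infinity) match the setup in \cite{labesse}; once this bookkeeping is done, the three asserted properties of $\Pi$ follow by reading off the local components of the transfer.
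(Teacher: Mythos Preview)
Your proposal is correct and matches the paper's approach exactly: the paper does not give a proof of this theorem but simply states it as a consequence of Corollaire~5.3 and Th\'eor\`eme~5.4 of \cite{labesse}, which are precisely the two results you invoke. Your write-up supplies more detail than the paper does about how to read off the three local properties from Labesse's transfer, but the underlying argument is the same.
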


\begin{thm}
  \label{thm: base change from G to GL} Suppose that $\Pi$ is an
  automorphic representation of $G(\A_{F^+})$.
  Then there is a regular algebraic, conjugate self dual automorphic
  representation $\pi$ of $\GL_2(\A_F)$ of some weight $\lambda\in(\Z^2_+)_0^{\Hom(F,\C)}$ such that either
  \begin{enumerate}[\ \ \ \ \ (a)]
  \item $\pi$ is cuspidal, or
  \item $\pi = \chi_1\boxplus \chi_2$ is the isobaric direct sum of
  characters $\chi_1,\chi_2: F^\times\backslash \A_F^\times\to\C^\times$ 
  \end{enumerate} and in either case we have:
    \begin{enumerate}
    \item For each embedding $\tau:F^+\into\R$ and each
    $\hat{\tau}\into\C$ extending $\tau$, we have $\Pi_\tau\cong
    \Sigma_{\lambda_{\hat{\tau}}}^\vee\circ\iota_{\hat{\tau}}$.
  \item If $v$ is a finite place of $F^+$ which splits as $ww^c$ in
    $F$, then $\Pi_v\cong \pi_w\circ\iota_w$.
  \item If $v$ is a finite place of $F^+$ which is inert in $F$, and
    $\Pi_v$ has a fixed vector for some
    hyperspecial maximal compact subgroup of $G(F^+_v)$, then $\pi_v$ is unramified.
    \end{enumerate}
\end{thm}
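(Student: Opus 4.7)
The plan is to derive the statement directly from Labesse's stable base change for unitary groups (\cite{labesse}, Corollaire 5.3 and Théorème 5.4), handling the dichotomy (a) versus (b) via the isobaric classification for $\GL_2$.

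First, I would apply Labesse's theorem to produce a weak base change: given the automorphic representation $\Pi$ of $G(\A_{F^+})$, Labesse yields an isobaric automorphic representation $\pi$ of $\GL_2(\A_F)$ that is conjugate self-dual (i.e.\ $\pi^c \cong \pi^\vee$) and is a base change of $\Pi$ at almost all places; in particular $\pi_w \cong \Pi_v \circ \iota_w^{-1}$ at every split place $v = ww^c$ and $\pi_v$ is unramified whenever $v$ is inert in $F$ and $\Pi_v$ admits a hyperspecial fixed vector. The infinite components $\Pi_\tau \cong \Sigma_{\lambda_{\hat\tau}}^\vee \circ \iota_{\hat\tau}$ (a finite-dimensional representation of $G(F^+_\tau) \cong U_2(\R)$) determine a weight $\lambda \in (\Z^2_+)_0^{\Hom(F,\C)}$ and force $\pi_\infty$ to have the same infinitesimal character as $\Sigma_\lambda^\vee$, so that $\pi$ is regular algebraic of weight $\lambda$; the condition $\lambda \in (\Z^2_+)_0$ is automatic from $\pi^c \cong \pi^\vee$.

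Second, I would analyse the global shape of $\pi$. By the classification of isobaric automorphic representations of $\GL_2(\A_F)$, $\pi$ is either cuspidal, a single Hecke character (viewed as a one-dimensional automorphic representation), or an isobaric sum $\chi_1 \boxplus \chi_2$ of two Hecke characters. The first and third possibilities are cases (a) and (b) respectively. To rule out the single-character case, observe that this would force $\pi_\infty$ to be one-dimensional, so that each $\Sigma_{\lambda_{\hat\tau}}$ is one-dimensional; this is consistent only if we allow the residual possibility to be interpreted as an isobaric sum of two characters (by writing the character as $\chi \boxplus \chi \cdot |\det|$, say), but more cleanly Labesse's construction (working through his endoscopic classification for $U(2)$) outputs either a stable cuspidal packet (giving (a)) or an endoscopic packet arising from $U(1) \times U(1)$ (giving (b)); so the dichotomy is built into his theorem.

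Third, I would verify the three local compatibility statements. Condition (1) at infinite places is immediate from the construction of $\lambda$ above together with the explicit description of $\Pi_\tau$. Condition (2) at split finite places is exactly the content of Labesse's local-global compatibility at split places, where the base change is simply the identification $G(F^+_v) \xrightarrow{\iota_{\tilde v}} \GL_2(F_{\tilde v})$. Condition (3) at inert places is Labesse's unramified base change: if $\Pi_v$ is spherical then its Satake parameter determines an unramified parameter for $\GL_2(F_v)$ via base change, so $\pi_v$ is unramified.

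The main obstacle is the case analysis giving (a) versus (b): for $n=2$ the endoscopic picture is mild, but one still needs Labesse's construction to guarantee there are no other possibilities (in particular, no non-tempered one-dimensional $\pi$); this is precisely why we appeal to Théorème 5.4 of \cite{labesse} rather than just a weak base change statement.
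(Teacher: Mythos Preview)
Your proposal is correct and matches the paper's approach: the paper gives no proof beyond citing Corollaire 5.3 and Th\'eor\`eme 5.4 of \cite{labesse} as the source of both base change theorems, so your sketch of how to extract the statement from Labesse (including the endoscopic dichotomy giving cases (a) and (b)) is exactly the intended derivation, just spelled out in more detail than the paper bothers to.
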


We now wish to define what it means for an irreducible representation
$\rbar:G_F\to\GL_2(\Flbar)$ to be modular of some weight. In order to
do so, we return to the spaces of algebraic modular forms considered
before. For each place $w|l$ of $F$, let $k_w$ denote the residue
field of $F_w$. If $w$ lies over a place $v$ of $F^+$, write
$v=ww^c$. Let $(\Z^2_+)_0^{\coprod_{w|l}\Hom(k_w,\Flbar)}$ denote the
subset of $(\Z^2_+)^{\coprod_{w|l}\Hom(k_w,\Flbar)}$ consisting of
elements $a$ such that for each $w|l$, if $\sigma\in\Hom(k_w,\Flbar)$
then \[a_{\sigma,1}+a_{\sigma c,2}=0.\] 
If $a \in (\Z^2_+)^{\coprod_{w|l}\Hom(k_w,\Flbar)}$ and $w|l$ is a
place of $F$, then we
denote by $a_w$ the element $(a_\sigma)_{\sigma \in
  \Hom(k_w,\Flbar)}\in (\Z^2_+)^{\Hom(k_w,\Flbar)}$.

If $\F$ is
a finite extension of $\Fl$, we say that an element $a\in(\Z^2_+)^{\Hom(\F,\Flbar)}$ is a \emph{Serre
  weight} if for each $\sigma\in\Hom(\F,\Flbar)$ we
have \[l-1\ge a_{\sigma,1}-a_{\sigma,2}.\]
If $a\in(\Z^2_+)^{\Hom(\F,\Flbar)}$ is a Serre weight then we define
an irreducible $\Flbar$-representation $F_a$ of $\GL_2(\F)$
by \[F_a:=\otimes_{\tau\in\Hom(\F,\Flbar)}\det{}^{a_{\tau,2}}\otimes\Sym^{a_{\tau,1}-a_{\tau,2}}\F^2\otimes_{\F,\tau}\Flbar.\]
We say that two Serre weights $a$ and $b$ are \emph{equivalent} if and only if
$F_a\cong F_b$ as representations of $\GL_2(\F)$. This is equivalent
to demanding that for each $\sigma\in\Hom(\F,\Flbar)$, we
have \[a_{\sigma,1}-a_{\sigma,2}=b_{\sigma,1}-b_{\sigma,2},\] and the
character $\F^\times\to\Flbar^\times$ given
by \[x\mapsto\prod_{\sigma\in\Hom(\F,\Flbar)}\sigma(x)^{a_{\sigma_2}-b_{\sigma_2}}\]is
trivial. If $L$ is
a finite extension of $\Ql$ with residue field $\F$, we say that an
element $\lambda\in(\Z^2_+)^{\Hom(L,\Qlbar)}$ is a \emph{lift} of an
element $a\in(\Z^2_+)^{\Hom(\F,\Flbar)}$ if for each $\sigma\in\Hom(\F,\Flbar)$ there is an element
$\tau\in\Hom(L,\Qlbar)$ lifting $\sigma$ such
that $\lambda_{\tau}=a_\sigma$, and for all other
$\tau'\in\Hom(L,\Qlbar)$ lifting $\sigma$ we have
$\lambda_{\tau'}=0$.

We say that an
element  $a\in(\Z^2_+)_0^{\coprod_{w|l}\Hom(k_w,\Flbar)}$ is a \emph{Serre
  weight} if $a_w$ is a Serre weight for each $w|l$.
 If $a\in(\Z^2_+)_0^{\coprod_{w|l}\Hom(k_w,\Flbar)}$ is a Serre weight,
we define an irreducible $\Flbar$-representation $F_a$ of
$G(\cO_{F^+,l})$ as follows: we define \[F_a=\otimes_{\tv\in\tS_l}
F_{a_\tv},\] an irreducible representation of
$\prod_{\tv\in\tilde{S}_l}\GL_2(k_\tv)$, and we let $G(\cO_{F^+,l})$ act on
$F_{a_\tv}$ by the composition of $\iota_\tv$ and the map $\GL_2(\cO_{F_\tv})\to\GL_2(k_\tv)$. Again, we say that two Serre weights $a$ and $b$ are equivalent
if and only if $F_a\cong F_b$ as representations of
$G(\cO_{F^+,l})$. This is equivalent to demanding that for each place
$w|l$ and each $\sigma\in\Hom(k_w,\Flbar)$ we have  \[a_{\sigma,1}-a_{\sigma,2}=b_{\sigma,1}-b_{\sigma,2},\] and the
character $k_w^\times\to\Flbar^\times$ given
by \[x\mapsto\prod_{\sigma\in\Hom(k_w,\Flbar)}\sigma(x)^{a_{\sigma_2}-b_{\sigma_2}}\]is
trivial. We say that a weight $\lambda\in (\Z^2_+)_0^{\Hom(F,\Qlbar)}$ is a
\emph{lift} of a Serre weight
$a\in(\Z^2_+)_0^{\coprod_{w|l}\Hom(k_w,\Flbar)}$ if for each $w|l$,
$\lambda_w$ is a lift of $a_w$.

For the rest of this section, fix $K=\Qlbar$.
\begin{defn}\label{defn: good subgroup} We say that a compact open subgroup
of $G(\A_{F^+}^\infty)$ is \emph{good} if $U=\prod_vU_v$ with $U_v$ a
compact open subgroup of $G(F^+_v)$ such that:
\begin{itemize}
\item $U_v\subset G(\bigO_{F^+_v})$ for all $v$ which split in $F$;
 \item $U_v=G(\bigO_{F^+_v})$ if $v|l$;
  \item $U_v$ is a hyperspecial maximal compact subgroup of $G(F_v^+)$
    if $v$ is inert in $F$.
\end{itemize}
\end{defn}

Let $U$ be a good compact open subgroup of $G(\A_{F^+}^\infty)$. Let $T$ be a finite set of finite places of $F^+$ which split in $F$,
containing $S_l$ and all the places $v$ which split in $F$ for which
$U_v\neq G(\bigO_{F^+_v})$. We let $\mathbb{T}^{T,univ}$ be the
commutative $\bigO$-polynomial algebra generated by formal variables
$T_w^{(j)}$ for all $1\le j\le 2$, $w$ a place of $F$ lying over a
place $v$ of $F^+$ which splits in $F$ and is not contained in $T$.
For any $\lambda\in (\Z^n_+)^{\tI_l}$ (resp. any Serre weight $a\in(\Z^2_+)_0^{\coprod_{v|l}\Hom(k_v,\Flbar)}$), the algebra
$\mathbb{T}^{T,univ}$ acts on $S_\lambda(U,\cO)$ (resp.\ $S(U,F_a)$) via the
 Hecke operators
  \[ T_{w}^{(j)}:=  \iota_{w}^{-1} \left[ GL_2(\mc{O}_{F_w}) \left( \begin{matrix}
      \varpi_{w}1_j & 0 \cr 0 & 1_{2-j} \end{matrix} \right)
GL_2(\mc{O}_{F_w}) \right] 
\] for $w\not \in T$ and $\varpi_w$ a uniformiser in $\mc{O}_{F_w}$.
Suppose that $\mf{m}$ is a maximal ideal of
$\mathbb{T}^{T,univ}$ with residue field $\Flbar$ such that
$S_\lambda(U,\Qlbar)_{\mf{m}}\neq 0$. Then (cf. Proposition 3.4.2 of \cite{cht}) by Lemma \ref{lem: relationship of algebraic automorphic forms to classical
    automorphic forms}, Theorem \ref{thm: base change from G to GL},
  and Theorem \ref{thm: existence of Galois reps attached to RACSDC}, 
there is a continuous semisimple
representation \[\rbar_{\mf{m}}:G_{F}\to\GL_2(\Flbar)\]associated
to $\mf{m}$, which is uniquely determined by the properties that:
\begin{itemize}
\item $\rbar_{\mf{m}}^c\cong\rbar_{\mf{m}}^\vee\epsilonbar_l^{-1}$,
\item for all finite
places $w$ of $F$ not lying over $T$, $\rbar_{\mf{\m}}|_{G_{F_w}}$ is
unramified, and\item if $w$ is a finite place of $F$ which doesn't lie over $T$ and which splits over $F^+$,
  then the characteristic polynomial of  $\rbar_{\mf{\m}}(\Frob_w)$
  is \[X^2-T_w^{(1)}X+(\mathbf{N}w)T_w^{(2)}. \]
\end{itemize}

\begin{lem}\label{Lemma: equivalence of modular in char 0 and l of
    some weight, unitary group version}
  Suppose that $U$ is sufficiently small, and let $\mf{m}$ be a maximal ideal of
$\mathbb{T}_\lambda^{T,univ}$ with residue field $\Flbar$. Suppose
that $a\in(\Z^2_+)_0^{\coprod_{v|l}\Hom(k_v,\Flbar)}$ is a Serre weight,
and that $\lambda\in(\Z^2_+)^{\tI_l}$ is a lift of $a$. Then
\[S_\lambda(U,\Qlbar)_{\mf{m}}\neq 0\] if and only if \[S(U,F_a)_{\mf{m}}\neq 0.\]
\end{lem}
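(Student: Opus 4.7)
The plan is to realize both $S_\lambda(U,\Qlbar)_\mf{m}$ and $S(U,F_a)_\mf{m}$ as the generic and special fibers of a common finite free $\cO$-module equipped with the Hecke action, and then to observe that any such module is non-zero on one fiber if and only if it is non-zero on the other.

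First I would show that $W_\lambda \otimes_\cO \Flbar \cong F_a$ as representations of $G(\cO_{F^+,l})$. Unpacking definitions, for each $\tv \in \tS_l$ the factor $W_{\lambda_\tv}$ is a tensor product over all $\tau \in \Hom(F_\tv,\Qlbar)$. Because $\lambda$ is a lift of $a$, for each $\sigma \in \Hom(k_\tv,\Flbar)$ there is a unique lift $\tau_\sigma$ with $\lambda_{\tau_\sigma}=a_\sigma$, and every other lift $\tau'$ of $\sigma$ satisfies $\lambda_{\tau'}=0$. Reducing modulo $\mf{m}_\cO$, the trivial factors stay trivial, and the factor at $\tau_\sigma$ becomes $\det{}^{a_{\sigma,2}}\otimes\Sym^{a_{\sigma,1}-a_{\sigma,2}} k_\tv^2 \otimes_{k_\tv,\sigma}\Flbar$, which is precisely the $\sigma$-factor of $F_{a_\tv}$. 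Assembling over all $\sigma$ and all $\tv$ yields the claimed isomorphism, and in particular $S(U,F_a) \cong S_\lambda(U,\Flbar)$.

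Next I would exploit the sufficiently small hypothesis. Writing $M := S_\lambda(U,\cO)$, the double-coset decomposition of $G(\A_{F^+}^\infty)$ together with the triviality of the groups $U \cap t_i^{-1}G(F^+)t_i$ gives $M \cong \bigoplus_i W_\lambda$, so $M$ is finite free over $\cO$ and $S_\lambda(U,A) \cong M\otimes_\cO A$ for every $\cO$-algebra $A$, in a manner compatible with the action of $\mathbb{T}^{T,univ}$. Localizing at $\mf{m}$ therefore gives
\[ S_\lambda(U,\Qlbar)_\mf{m} \cong M_\mf{m}\otimes_\cO\Qlbar \quad\text{and}\quad S(U,F_a)_\mf{m} \cong M_\mf{m}\otimes_\cO\Flbar. \]

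It then suffices to show that $M_\mf{m}$ is itself a finite free $\cO$-module, as the $\cO$-rank $r$ of $M_\mf{m}$ will simultaneously control the dimensions of both fibers above. The Hecke algebra acts through its image in the finite $\cO$-algebra $\End_\cO(M)$, and after descending (in the standard way) to a finite extension $K_0/\Q_l$ large enough to contain the Hecke eigenvalues attached to $\mf{m}$, the corresponding localization is visibly a direct summand of a finite free $\cO_{K_0}$-module, since the image of the Hecke algebra in $\End_{\cO_{K_0}}(S_\lambda(U,\cO_{K_0}))$ is a finite $\cO_{K_0}$-algebra and so decomposes as a product over its finitely many maximal ideals; extending scalars back to $\cO$ gives freeness of $M_\mf{m}$ of some rank $r \ge 0$, and both displayed fibers have dimension $r$ over their respective coefficient fields. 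The only delicate point I foresee is precisely this descent step, needed because $\cO = \overline{\Z}_l$ is not Noetherian; everything else is a routine consequence of freeness and base change.
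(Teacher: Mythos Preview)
Your argument is correct and follows the same overall strategy as the paper: establish the isomorphism $W_\lambda\otimes_{\cO}\Flbar\cong F_a$ of $G(\cO_{F^+,l})$-modules, use the sufficiently small hypothesis to identify $S_\lambda(U,A)$ with $S_\lambda(U,\cO)\otimes_\cO A$, and then compare the generic and special fibers of $M_\mf{m}$.

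The only difference is that your final step is more elaborate than necessary. You go to the trouble of proving that $M_\mf{m}$ is finite free over $\cO$ via a descent to a finite extension $K_0/\Q_l$ (to get around the non-Noetherianness of $\cO_{\Qlbar}$). The paper avoids this entirely: for the $\Qlbar$-direction it simply uses that $M=S_\lambda(U,\cO_{\Qlbar})$ is $\cO$-torsion-free, so $M_\mf{m}\hookrightarrow M_\mf{m}\otimes\Qlbar$ and hence one side is nonzero iff the other is; for the $\Flbar$-direction it uses Nakayama's lemma applied to $M_\mf{m}$ as a finitely generated module over the local ring $(\T^{T,univ})_\mf{m}$ (noting $\mf{m}_\cO\subset\mf{m}$), which gives $M_\mf{m}\neq 0\Rightarrow M_\mf{m}\otimes_\cO\Flbar\neq 0$ directly. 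Your freeness statement is true and your descent argument is the standard one, but it is not needed for the lemma as stated.
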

\begin{proof}
Since $\Qlbar$ is $l$-torsion free, we have $S_\lambda(U,\Qlbar)_{\mf{m}}=S_\lambda(U,\cO_\Qlbar)_{\mf{m}}\otimes\Qlbar$, so $S_\lambda(U,\Qlbar)_{\mf{m}}\neq
  0$ if and only if  $S_\lambda(U,\cO_\Qlbar)_{\mf{m}}\neq
  0$. Since $U$ is sufficiently small, $S_\lambda(U,\Flbar)_{\mf{m}}\neq
  0$ if and only if  $S_\lambda(U,\cO_\Qlbar)_{\mf{m}}\neq
  0$, so that $S_\lambda(U,\Qlbar)_{\mf{m}}\neq
  0$ if and only if  $S_\lambda(U,\Flbar)_{\mf{m}}\neq
  0$. 

It then suffices to note that there is a natural isomorphism of
$G(\cO_{F^+,l})$-representations
$W_\lambda\otimes_{\cO_\Qlbar}\Flbar\isoto F_a$, so that we obtain a
$\mathbb{T}^{T,univ}$-equivariant isomorphism  $S_\lambda(U,\Flbar)\isoto S(U,F_a)$.
\end{proof}
We have the following definitions.

\begin{defn}\label{defn: galois split ramification}
  If $R$
  is a commutative ring and
  $r:G_F\to\GL_2(R)$ is a representation, we say that $r$ has
  \emph{split ramification} if $r|_{G_{F_w}}$ is unramified for any
  finite place $w\in F$ which does not split over $F^+$.
\end{defn}

\begin{defn}
  If $\pi$ is a RACSDC automorphic representation of $\GL_2(\A_F)$, we
  say that $\pi$ has \emph{split ramification} if $\pi_w$ is unramified for any
  finite place $w\in F$ which does not split over $F^+$.
\end{defn}

\begin{defn}\label{defn: modular of some Serre weight}
  Suppose that $\rbar:G_F\to\GL_2(\Flbar)$ is a continuous
  irreducible representation. Then we say that $\rbar$ \emph{is modular
  of weight} $a\in(\Z^2_+)_0^{\coprod_{w|l}\Hom(k_w,\Flbar)}$ if $a$
is a Serre weight and there is a sufficiently small, good level $U$, a set of places $T$ as
  above and a maximal ideal $\mathfrak{m}$ of
  $\mathbb{T}^{T,univ}$ with residue field $\Flbar$ such
  that
  \begin{itemize}
  \item $S(U,F_a)_{\mf{m}}\neq 0$, and
  \item $\rbar\cong \rbar_{\mathfrak{m}}$.
  \end{itemize} (Note that $\rbar_{\mf{m}}$ exists by Lemma \ref{Lemma: equivalence of modular in char 0 and l of
    some weight, unitary group version}.) We say that $\rbar$ is \emph{modular} if it is modular of
  some weight.
\end{defn}
\begin{remark}
  Note that if $\rbar:G_F\to\GL_2(\Flbar)$ is modular then $\rbar$
  must have split ramification and
  $\rbar^c\cong\rbar^\vee\epsilonbar_l^{-1}$. Note also that this
  definition is independent of the choice of $\tilde{S}_l$ (because  $F_{a_{\tv}}\circ
    \imath_{\tv}\cong F_{a_{c\tv}}\circ \imath_{c\tv}$, we see that $F_a$ itself
    is independent of the choice of $\tilde{S}_l$). 
\end{remark}

\begin{lem}\label{lem: equivalence of modular of Serre weight and
    RACSDC lift}Suppose that $\rbar:G_F\to\GL_2(\Flbar)$ is a continuous
  irreducible representation with split ramification. Let
  $a\in(\Z^2_+)_0^{\coprod_{w|l}\Hom(k_w,\Flbar)}$ be a Serre weight, and
  let $\lambda\in(\Z^2_+)_0^{\Hom(F,\Qlbar)}$ be a lift of $a$. Then $\rbar$ is modular
  of weight $a\in(\Z^2_+)_0^{\coprod_{w|l}\Hom(k_w,\Flbar)}$ if and only
  if there is a RACSDC automorphic representation $\pi$ of
  $\GL_2(\A_F)$ of weight $\imath\lambda$ and level prime to
  $l$ which has split ramification, and which satisfies $\rbar_{l,\imath}(\pi)\cong\rbar$.
\end{lem}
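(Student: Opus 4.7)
The plan is to use Lemma \ref{Lemma: equivalence of modular in char 0 and l of some weight, unitary group version} as a bridge between the mod $l$ statement (existence of a Hecke eigensystem in $S(U,F_a)$) and a characteristic zero statement on $G$ (a Hecke eigensystem in $S_\lambda(U,\Qlbar)$), and then to pass between $G(\A_{F^+})$ and $\GL_2(\A_F)$ via the two base change theorems, using Lemma \ref{lem: relationship of algebraic automorphic forms to classical automorphic forms} to translate between algebraic modular forms on $G$ and classical automorphic forms.

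For the forward direction, I would take a good sufficiently small $U$, a set $T$, and a maximal ideal $\mathfrak{m}\subset \mathbb{T}^{T,univ}$ with $S(U,F_a)_{\mathfrak{m}}\neq 0$ and $\rbar\cong\rbar_{\mathfrak{m}}$. Applying Lemma \ref{Lemma: equivalence of modular in char 0 and l of some weight, unitary group version} gives $S_\lambda(U,\Qlbar)_{\mathfrak{m}}\neq 0$, and Lemma \ref{lem: relationship of algebraic automorphic forms to classical automorphic forms} produces an automorphic representation $\Pi$ of $G(\A_{F^+})$ with $\Pi^U\neq 0$, whose Hecke eigenvalues reduce mod $l$ to those giving $\rbar_{\mathfrak{m}}$, and whose archimedean component is pinned down by $\lambda$. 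The fact that $U_v=G(\cO_{F^+_v})$ at $v|l$ (split in $F$) forces $\Pi$ to be unramified there, and at inert places the hyperspecial condition on $U_v$ likewise forces $\Pi_v$ unramified. Theorem \ref{thm: base change from G to GL} yields a regular algebraic conjugate self-dual $\pi$ on $\GL_2(\A_F)$, either cuspidal or an isobaric sum $\chi_1\boxplus\chi_2$. I would rule out the isobaric case using the irreducibility of $\rbar$: in that case $r_{l,\imath}(\chi_1)\oplus r_{l,\imath}(\chi_2)$ would match $\pi$ and its reduction would match $\rbar_{\mathfrak{m}}$ at unramified split Frobenii, so by Chebotarev $\rbar$ would be reducible, a contradiction. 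Thus $\pi$ is RACSDC; the compatibilities in Theorem \ref{thm: base change from G to GL} give weight $\imath\lambda$, level prime to $l$, and split ramification, while matching characteristic polynomials of Frobenius at split unramified places via Theorem \ref{thm: existence of Galois reps attached to RACSDC} give $\rbar_{l,\imath}(\pi)\cong \rbar$.

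For the reverse direction, I would apply Theorem \ref{thm: base change from GL to G} to $\pi$ to produce an automorphic representation $\Pi$ of $G(\A_{F^+})$ whose local components are determined by $\pi$: unramified at all $v|l$ (since $\pi$ has level prime to $l$) and at all inert $v$ (since $\pi$ has split ramification), with the correct infinitesimal character at infinity. I can then choose a good compact open subgroup $U$ with $\Pi^U\neq 0$ by taking $U_v=G(\cO_{F^+_v})$ at $v|l$ and at places where $\Pi_v$ is unramified, and suitable smaller $U_v$ elsewhere; to arrange $U$ sufficiently small I enlarge $T$ to include an auxiliary split place $v_0$ of $F^+$ at which I shrink $U_{v_0}$ further (keeping $\Pi^U\neq 0$, which is possible since we may replace $\Pi^{U_{v_0}}$ by any nonzero subspace on which the image of $U_{v_0}$ acts faithfully by shrinking enough). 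Lemma \ref{lem: relationship of algebraic automorphic forms to classical automorphic forms} then gives a Hecke eigensystem in $S_\lambda(U,\Qlbar)$, and by construction and Theorem \ref{thm: existence of Galois reps attached to RACSDC} the associated $\mathfrak{m}$ satisfies $\rbar_{\mathfrak{m}}\cong\rbar$. Finally, Lemma \ref{Lemma: equivalence of modular in char 0 and l of some weight, unitary group version} converts this to $S(U,F_a)_{\mathfrak{m}}\neq 0$, exhibiting modularity of weight $a$.

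The main obstacle is essentially bookkeeping: (i) ensuring the good level $U$ produced in the reverse direction can be made sufficiently small without killing $\Pi^U$, which requires enlarging $T$ by an auxiliary split place and invoking Theorem \ref{thm: existence of Galois reps attached to RACSDC} to identify the Hecke data at the new places with those coming from $\rbar$; and (ii) ruling out the isobaric direct sum alternative in the forward application of Theorem \ref{thm: base change from G to GL}, where irreducibility of $\rbar$ is exactly the input needed. All other steps are direct applications of the results quoted in Section \ref{sec:Definitions}.
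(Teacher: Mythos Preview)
Your proposal is correct and follows essentially the same route as the paper's proof: Lemma~\ref{Lemma: equivalence of modular in char 0 and l of some weight, unitary group version} as the bridge, Lemma~\ref{lem: relationship of algebraic automorphic forms to classical automorphic forms} to pass to automorphic representations of $G$, and the two base change theorems to go to and from $\GL_2(\A_F)$. You are in fact more explicit than the paper in ruling out the isobaric alternative in Theorem~\ref{thm: base change from G to GL} via irreducibility of $\rbar$ and Chebotarev, while the paper is more concrete about the auxiliary-place trick in the reverse direction (it takes $v'$ above a prime $p$ with $[F(\zeta_p):F]>2$ and sets $\iota_{w'}(U_{v'})=\ker(\GL_2(\cO_{w'})\to\GL_2(k_{w'}))$, which directly forces $U$ to be sufficiently small).
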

\begin{proof}Suppose firstly that $\rbar$ is modular of weight
  $a$. Then by definition there is a good $U$ and a $T$ as
  above with $U$ sufficiently small, and a maximal ideal $\mathfrak{m}$ of
  $\mathbb{T}^{T,univ}$ with residue field $\Flbar$ such
  that
  \begin{itemize}
  \item $S(U,F_a)_{\mf{m}}\neq 0$, and
  \item $\rbar\cong \rbar_{\mathfrak{m}}$.
  \end{itemize}By Lemma \ref{Lemma: equivalence of modular in char 0 and l of
    some weight, unitary group version}, the first condition is
  equivalent to $S_\lambda(U,\Qlbar)_{\mf{m}}\neq 0$. Define a compact
  open subgroup $U'=\prod_w U_w'$ of $\GL_2(\A_F^\infty)$ by
  \begin{itemize}
  \item $U_w'=\GL_2(\cO_{F_w})$ if $w$ is not split over $F^+$.
  \item $U_w'=\iota_w(U_{w|_{F^+}})$ if $w$ splits over $F^+$.
  \end{itemize}By Lemma \ref{lem: relationship of algebraic automorphic forms to classical
    automorphic forms}, Theorem \ref{thm: base change from G to GL},
  and Theorem \ref{thm: existence of Galois reps attached to RACSDC},
  there is a RACSDC automorphic representation $\pi$ of weight
  $\imath\lambda$  which satisfies $\rbar_{l,\iota}(\pi)\cong\rbar$, and
  $\pi_w^{U'_w}\ne 0$ for all finite places $w$ of $F$. Since $U$ is
  good, we see that $\pi$ has level prime to $l$, and it has split
  ramification, as required.

Conversely, suppose that there is a RACSDC automorphic representation $\pi$ of
  $\GL_2(\A_F)$ of weight $\imath\lambda$ which has split ramification and level prime to $l$ with
  $\rbar_{l,\iota}(\pi)\cong\rbar$.  Let $U=\prod_v U_v$ be a compact
  open subgroup of
  $G(\A_{F^+}^\infty)$ such that:
  \begin{itemize}
  \item $U_v=G(\cO_{F^+_v})$ if $v$ is inert in $F$;
  \item if $v$ splits as $v=ww^c$ in $F$, then $\pi_w^{\iota_w(U_v)}\neq (0)$;
  \item there is a finite place $v'$ of $F$ which splits as
    $w'w'^c$ in $F$ and is such
    that
    \begin{itemize}
    \item $v'$ lies above a rational prime $p$ with $[F(\zeta_p):F]>2$, and
    \item $\iota_{w'}(U_{v'})=\ker(\GL_2(\cO_{w'})\to\GL_2(k_{w'}))$.
    \end{itemize}
  \end{itemize}
The third bullet point implies that $U$ is
sufficiently small. Then by Lemma \ref{lem: relationship of algebraic automorphic forms to classical
    automorphic forms} and Theorem \ref{thm: base
    change from GL to G} we have $S_\lambda(U,\Qlbar)_{\mf{m}}\ne
  0$. The result follows from Lemma \ref{Lemma: equivalence of modular in char 0 and l of
    some weight, unitary group version}.
\end{proof}

\section{A lifting theorem}\label{sec:A lifting
  theorem}\subsection{}We recall some terminology from \cite{BLGGT},
specialized to the crystalline (as opposed to potentially crystalline)
case. Fix a prime
$l$. Let $K$ be a finite extension of $\Ql$, and let $\cO$ be the ring of
integers in a finite extension of $\Ql$ in $\Qlbar$, with residue field
$k$. Assume that for each continuous embedding $K \into \barQQ_l$, the
image is contained in the field of fractions of $\cO$.

Let $\rhobar:G_K\to\GL_n(k)$ be a continuous representation, and let
$R_{\cO,\rhobar}^\Box$ be the universal $\cO$-lifting ring. Let $\{
H_\tau \}$ be a collection of $n$-element multisets of integers
parametrized by $\tau \in \Hom(K,\barQQ_l)$. Then
$R_{\CO,\barrho}^\Box$ has a unique quotient $R_{\CO,\barrho,\{
  H_\tau\}, \cris}^\Box$ which is reduced and without $l$-torsion and
such that a $\barQQ_l$-point of $R_{\CO,\barrho}^\Box$ factors through
$R_{\CO,\barrho,\{ H_\tau\}, \cris}^\Box$ if and only if it
corresponds to a representation $\rho:G_K \ra \GL_n(\barQQ_l)$ which is
crystalline and has $\HT_\tau(\rho)=H_\tau$ for all $\tau:K \into
\barQQ_l$. We will write $R_{\barrho,\{ H_\tau\}, \cris}^\Box \otimes
\barQQ_l$
for  $R_{\CO,\barrho,\{ H_\tau\}, \cris}^\Box \otimes_{\CO} \barQQ_l$. 
This definition is independent of the choice of $\CO$. The scheme
$\Spec (R_{\barrho,\{ H_\tau\}, \cris}^\Box \otimes \barQQ_l)$ is
formally smooth over $\Qlbar$. (See \cite{kisindefrings}.)

If $\rho_i : G_K \to \GL_n(\CO_\Qlbar)$ are continuous representations
for $i=1,2$,
we say that $\rho_1$ {\em connects to} $\rho_2$, which we denote $\rho_1
\sim \rho_2$, if and only if 
\begin{itemize}
\item the reduction $\barrho_1:=\rho_1 \bmod \gm_{\barQQ_l}$ is equivalent to the reduction $\barrho_2:= \rho_2 \bmod \gm_{\barQQ_l}$;
\item $\rho_1$ and $\rho_2$ are both  crystalline;
\item for each $\tau:K \into \barQQ_l$ we have $\HT_\tau(\rho_1)=\HT_\tau(\rho_2)$;
\item and $\rho_1$ and $\rho_2$ define points on the same irreducible component of the scheme $\Spec (R_{\barrho_1,\{ \HT_\tau(\rho_1)\}, \cris}^\Box \otimes \barQQ_l)$.
\end{itemize}
(In this last bullet point, we mean that $\rho_1$ and $A\rho_2A^{-1}$
define points on the same irreducible component of $\Spec
(R_{\barrho_1,\{ \HT_\tau(\rho_1)\}, \cris}^\Box \otimes \barQQ_l)$
where $A\in \GL_n(\CO_{\Qlbar})$ is such that $A\barrho_2
A^{-1}=\barrho_1$. This condition is independent of the choice of $A$
by Lemma 1.2.1 of \cite{BLGGT}.) 
As in section 1.4 of \cite{BLGGT}, we have the following:
\begin{enumerate}
\item The relation $\rho_1 \sim \rho_2$ does not on the $\GL_n(\CO_{\barQQ_l})$-conjugacy class of $\rho_1$ or $\rho_2$.
\item $\sim$ is symmetric and transitive. 
\item If $K'/K$ is a finite extension and $\rho_1 \sim \rho_2$ then $\rho_1|_{G_{K'}} \sim \rho_2|_{G_{K'}}$. 
\item If $\rho_1 \sim \rho_2$ and $\rho_1' \sim \rho_2'$ then
  $\rho_1\oplus \rho_1'  \sim \rho_2 \oplus \rho_2'$ and
  $\rho_1\otimes \rho_1'  \sim \rho_2 \otimes \rho_2'$ and $\rho_1^\vee\sim\rho_2^\vee$.
\item If $\mu:G_K \ra \barQQ_l^\times$ is a continuous unramified
  character with $\mubar=1$ and $\rho_1$ is crystalline then $\rho_1 \sim \rho_1 \otimes \mu$.
\item \label{semisimp} Suppose that $\rho_1$ crystalline and that $\barrho_1$ is
  semisimple. Let $\Fil^i$ be an invariant filtration on $\rho_1$ by
  $\CO_{\barQQ_l}$ direct summands. Then $\rho_1 \sim \bigoplus_i
  \gr^i \rho_1$.
\end{enumerate}

We will call a crystalline representation $\rho:G_K \ra \GL_n(\CO_{\barQQ_l})$ {\em diagonal} if 
it is of the form $\chi_1 \oplus \dots \oplus \chi_n$ with $\chi_i:G_K \ra \CO_{\barQQ_l}^\times$.
We will call a crystalline representation $\rho:G_K \ra \GL_n(\CO_{\barQQ_l})$ {\em diagonalizable} if 
it connects to some diagonal representation. We will call a representation $\rho:G_K \ra \GL_n(\CO_{\barQQ_l})$ {\em potentially diagonalizable} if there is a finite extension $K'/K$ such that $\rho|_{G_{K'}}$ is diagonalizable.
Note that if $K''/K$ is a finite extension and $\rho$ is diagonalizable (resp. potentially diagonalizable)
then $\rho|_{G_{K''}}$ is diagonalizable (resp. potentially
diagonalizable).

As in \cite{BLGGT}, we make the following convention: Suppose that $F$
is a global field and that $r:G_F \ra GL_n(\barQQ_l)$ is a continuous
representation with irreducible reduction $\barr$. In this case there
is model $r^\circ:G_F \ra \GL_n(\CO_{\barQQ_l})$ of $r$, which is
unique up to $\GL_n(\CO_{\barQQ_l})$-conjugation. If $v|l$ is a place
of $F$ we write $r|_{G_{F_v}} \sim \rho_2$ to mean $r^\circ|_{G_{F_v}}
\sim \rho_2$. We will also say that $r|_{G_{F_v}}$ is (potentially)
diagonalizable to mean that $r^\circ|_{G_{F_v}}$ is.

Fix an isomorphism $\iota:\Qlbar\isoto\C$. Let $F$ be an imaginary CM field with maximal totally real subfield
$F^+$. We now demonstrate that any irreducible modular representation
$\rbar:G_F\to\GL_2(\Flbar)$ is, after a solvable base change,
congruent to an automorphic Galois representation which is diagonalizable at all places dividing $l$. The argument is similar to
that of the proof of Lemma 6.1.1 of \cite{blggord}, which proves an
analogous result over totally real fields.
\begin{lem}\label{lem: in 2 dimensions, exists a pot diag lift}
  Suppose that $\pi$ is a RACSDC automorphic representation of $\GL_2(\A_F)$ and
  that $\rbar_{l,\imath}(\pi)$ is irreducible. Let $\Favoid/F$ be a
  finite extension. Then there is a finite solvable extension $F'/F$
  and a RACSDC automorphic representation $\pi'$ of $\GL_2(\A_{F'})$
  such that
  \begin{itemize}
  \item $F'$ is linearly disjoint from $\Favoid$ over $F$.
\item $\pi'$ has weight 0.
  \item $\rbar_{l,\imath}(\pi')\cong \rbar_{l,\imath}(\pi)|_{G_{F'}}$.
  \item For each place $w|l$ of $F'$, $r_{l,\imath}(\pi')|_{G_{F'_w}}$ is diagonalizable.
  \end{itemize}

\end{lem}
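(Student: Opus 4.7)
The plan is to perform a sequence of solvable base changes to simplify $\barrho:=\rbar_{l,\imath}(\pi)$ locally at places above $l$, to exhibit at each such place an explicit diagonal crystalline lift of Hodge--Tate weights $\{0,1\}$, and then to invoke an automorphy lifting theorem from \cite{BLGGT} to assemble these into the desired $\pi'$.

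First I would choose a finite solvable CM extension $F_1/F$, linearly disjoint from $\Favoid\cdot F(\zeta_l)$ over $F$, with the property that for each place $w|l$ of $F_1$ the restriction $\barrho|_{G_{F_{1,w}}}$ is a direct sum of two unramified characters $\barchi_{w,1}\oplus\barchi_{w,2}$. Since $\barrho$ has finite image, this is arranged by killing the (pro-$l$) wild part and the (cyclic, prime-to-$l$) tame part of $\barrho|_{I_{F_v}}$ at each $v|l$ by local solvable extensions, then globalising via weak approximation/Grunwald--Wang to produce a single solvable extension with the prescribed local behaviour, while also arranging $\zeta_l\in F_1$ and linear disjointness from $\Favoid$. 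Solvable cyclic base change (Theorems~\ref{thm: base change from GL to G} and~\ref{thm: base change from G to GL}, combined with \cite{MR1044819}) then yields a RACSDC representation $\pi_1$ of $\GL_2(\A_{F_1})$ with $r_{l,\imath}(\pi_1)\cong r_{l,\imath}(\pi)|_{G_{F_1}}$, and irreducibility of $\barrho|_{G_{F_1}}$ can be ensured by taking $F_1$ also disjoint from the splitting field of $\barrho$.

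Next, at each $w|l$ of $F_1$, I would construct a diagonal crystalline lift $\rho_w=\chi_{w,1}\oplus\chi_{w,2}:G_{F_{1,w}}\to\GL_2(\CO_{\Qlbar})$ of $\barrho|_{G_{F_{1,w}}}$ with Hodge--Tate weights $\{0,1\}$: choose $\chi_{w,1}$ to be an unramified crystalline lift of $\barchi_{w,1}$ with all Hodge--Tate weights $0$, and $\chi_{w,2}$ to be a crystalline lift of $\barchi_{w,2}$ with all Hodge--Tate weights $1$, both of which exist by local class field theory. By construction $\rho_w$ is diagonal, hence diagonalizable, and its Hodge--Tate data corresponds to weight $\lambda=0$. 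I would then apply the automorphy lifting theorem with prescribed potentially diagonalizable local lifts from \cite{BLGGT}: since $\barrho|_{G_{F_1}}$ is automorphic via $\pi_1$, irreducible, and admits the potentially diagonalizable crystalline local lifts $\{\rho_w\}$, there exists, possibly after a further solvable extension $F'/F_1$ chosen linearly disjoint from $\Favoid$ over $F$, a RACSDC $\pi'$ of $\GL_2(\A_{F'})$ of weight $0$ with $\rbar_{l,\imath}(\pi')\cong\barrho|_{G_{F'}}$ and $r_{l,\imath}(\pi')|_{G_{F'_w}}\sim\rho_w|_{G_{F'_w}}$; the last condition gives the required diagonalizability at each place above $l$.

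The principal obstacle is verifying the Taylor--Wiles/adequacy hypothesis needed to invoke the lifting theorem, since the lemma only assumes irreducibility of $\barrho$. For $l\ge 7$ this reduces to irreducibility of $\barrho|_{G_{F'(\zeta_l)}}$, which can be arranged by a further solvable base change disjoint from $\Favoid$; for $l=3,5$ one appeals to Appendix~\ref{app:adequacy}. Throughout, the avoidance conditions must be accumulated at each stage so that every extension remains linearly disjoint from $\Favoid$ over $F$, and split ramification and the RACSDC property must be tracked through the successive cyclic base changes, both of which are routine.
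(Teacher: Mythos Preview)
Your proposal has a genuine circularity problem. The lemma you are trying to prove is precisely the input that supplies the ``potentially diagonalizable seed'' required by the lifting theorems you want to invoke. Theorem~\ref{diaglift} of this paper explicitly assumes the existence of an automorphic $\pi'$ over some $F'/F$ with $r_{l,\imath}(\pi')$ potentially diagonalizable at every place above $l$; Lemma~\ref{lem: in 2 dimensions, exists a pot diag lift} is exactly what furnishes that hypothesis in the proof of Theorem~\ref{thm: existence of lifts, pot diag components}. If instead you mean to cite Theorem~4.3.1 of \cite{BLGGT} directly, that result carries the hypothesis $l\ge 2(d+1)$ (needed for its internal potential automorphy step), which for $n=2$ forces $l\ge 7$. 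The present lemma, however, is stated with no hypothesis on $\barrho$ beyond irreducibility, and must cover $l=3,5$ including the exceptional projective images where adequacy fails. Your appeal to Appendix~\ref{app:adequacy} does not close this gap: that appendix shows irreducibility implies adequacy \emph{except} in certain cases, and nothing in the lemma excludes those cases.

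The paper's argument avoids automorphy lifting theorems altogether and hence sidesteps any adequacy assumption. It works on the unitary group side using congruences between algebraic modular forms: one first replaces $\pi$ by a weight $0$ congruent form via a Jordan--H\"older argument on the coefficient module, then (using cuspidal types as in Lemma~3.1.5 of \cite{kis04}) by a form that is supercuspidal at each $v|l$, and after a further solvable base change by one that is unramified at each $v|l$. None of these steps require adequacy. The diagonalizability conclusion then comes from the \emph{structure of the Barsotti--Tate crystalline deformation ring}: after base changing so that $\barrho$ is locally trivial, Proposition~2.3 of \cite{MR2280776} and Corollary~2.5.16 of \cite{kis04} show that the ring $R^\Box_{\barrho,\{0,1\},\cris}\otimes\Qlbar$ has at most two components (ordinary and non-ordinary), and one exhibits a diagonal point on each. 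Hence \emph{any} crystalline lift with Hodge--Tate weights $\{0,1\}$ --- in particular $r_{l,\imath}(\pi')|_{G_{F'_w}}$ --- is automatically diagonalizable. This deformation-ring structure is the key idea your proposal is missing.
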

\begin{proof}
  We first show that after a solvable base change,
  $\rbar_{l,\imath}(\pi)$ has a lift which is automorphic of weight 0. (This is presumably true
  without making a base change but the weaker statement will suffice
  for our purposes and allows us to transfer to a definite unitary group.) Choose a finite solvable extension of CM fields $F_1/F$ such that
\begin{itemize}

\item $F_1$ is linearly disjoint from $\Favoid\bar{F}^{\ker\rbar_{l,\imath}(\pi)}$ over $F$.
\item $F_1/F_1^+$ is unramified at all finite places.
\item $[F_1^+:\Q]$ is even.
\item Every place $v|l$ of $F_1^+$ splits in $F_1$.
\item If $\pi_1$ denotes the base change of $\pi$ to $F_1$, then
  $\pi_1$ is unramified at all finite places of $F_1$ lying over an
  inert place of $F^+_1$.
\item If $v|l$ is a place of $F_1$ such that $\pi_v$ is ramified, then
  $\pi_v$ is an unramified twist of the Steinberg representation.
\end{itemize}
As in section \ref{sec:Definitions}, we can choose a rank two unitary group
$G/F_1^+$ which is quasisplit at all finite places, compact at all
infinite places, and is split over $F_1$. Fix a model for $G$ over
$\cO_{F^+_1}$ as in section \ref{sec:Definitions}. We will freely use
the notation introduced in section \ref{sec:Definitions} to describe
spaces of algebraic modular forms on $G$. 

Suppose that $\pi_1$ has weight $a\in(\Z^2_+)^{\Hom(F_1,\C)}_0$. By Theorem \ref{thm: base change from GL to G} there is an automorphic
representation $\Pi$ of $G$ such that\begin{itemize}

\item If $v$ is a finite place of $F_1^+$ which is inert in $F_1$,
  then $\Pi_v$ has a fixed vector for some hyperspecial maximal
  compact subgroup of $G(F_{1,v}^+)$.
\item If $v$ is a finite place of $F_1^+$ which splits as $ww^c$ in $F_1$,
  then $\Pi_v\cong\pi_{1,w}\circ\iota_w$.
\item For each embedding $\tau:F_1^+\into\R$ and each $\tilde{\tau}$
  extending $\tau$, we have $\Pi_\tau\cong \Sigma^\vee_{a_\tau}\circ\iota_{\tilde{\tau}}$.
\end{itemize}Define a representation $W$ of $G(\cO_{F^+_1,l})$ on a
finite-dimensional $\Qlbar$-vector space as follows. Let $S_l$ denote
the set of places of $F^+_1$ lying over $l$, and let $\tS_l$ denote a
subset of the places of $F_1$ lying over $l$ consisting of exactly one
place $\tv$ lying over each place $v\in S_l$. Let $\tI_l$ denote the
set of embeddings $F_1\into\Qlbar$ giving rise to a place in $\tS_l$,
and for each $\tv\in\tS_l$ let $\tI_\tv$ denote the subset of $\tI_l$
of elements lying over $\tv$. Let $V_a$ be the  $\Qlbar$-vector space
with an action of $G(\cO_{F^+_{1,l}})$ given by
$W_a\otimes_{\cO_\Qlbar}\Qlbar$, where $W_a$ is defined as in section
\ref{sec:Definitions}. Let $V_l$ be the $\Qlbar$-vector space
with an action of $G(\cO_{F^+_{1,l}})$ given by \[V_l:=\otimes_{v\in
  S_l}V_v\]where $V_v$ is the (self-dual)
representation $\Ind_{I(\tv)}^{\GL_2(\cO_{F_{1,\tv}})}1_\Qlbar$, where $1_\Qlbar$ is the trivial
$\Qlbar$-representation of the standard Iwahori subgroup $I(\tv)$ of
$\GL_2(\cO_{F_{1,\tv}})$, and $G(\cO_{F^+_{1,l}})$ acts on $V_v$ via
$\iota_\tv$. Finally, let $W:=V_a\otimes V_l$, and let $W^\circ$ be a
$G(\cO_{F^+_{1,l}})$-stable $\cO_\Qlbar$-lattice in $W$.

Lemma \ref{lem: relationship of algebraic automorphic forms to
  classical automorphic forms} and the existence of $\Pi$ imply that
there is a compact open subgroup $U\in G(\A_{F_1^+}^\infty)$ which is
good in the sense of Definition \ref{defn: good subgroup} and is
sufficiently small, together with a finite set of places $T$ of
$F^+_1$ as in section \ref{sec:Definitions}, such that there is a
maximal ideal $\mf{m}$ of $\mathbb{T}^{T,univ}$ with:
\begin{itemize}
\item $S(U,W)_{\mf{m}}\ne 0$.
\item $\rbar_{\mf{m}}\cong \rbar_{l,\iota}(\pi_1)$.
\end{itemize}
Since $U$ is sufficiently small, we see (as in the proof of Lemma \ref{Lemma: equivalence of modular in char 0 and l of
    some weight, unitary group version}) that
  $S(U,W^\circ\otimes_{\cO_\Qlbar}\Flbar)_{\mf{m}}\ne 0$. Thus there is a Jordan-H\"older factor $F$ of the
$G(\cO_{F^+_{1,l}})$-representation
$W^\circ\otimes_{\cO_\Qlbar}\Flbar$ such that
$S(U,F)_{\mf{m}}\ne 0$. There is a smooth irreducible $\Qlbar$-representation $W_{sm}$ of
$G(\cO_{F^+_{1,l}})$ containing a stable $\cO_{\Qlbar}$-lattice $W_{sm}^\circ$
such that $F$ is a Jordan-H\"older factor of
$W_{sm}^\circ \otimes_{\cO_\Qlbar}\Flbar$ (this follows from the fact that
$F$ is a subquotient of $\otimes_{\tv \in
  \tS_l}\Ind_{1}^{\GL_2(k_\tv)}1_\Flbar$, so we may take $W_{sm}$ to
be a subquotient of the representation $\otimes_{\tv \in
  \tS_l}\Ind_{K_{\tv,1}}^{\GL_2(\mc{O}_{F_{1,\tv}})}1_{\Qlbar}$ where
$K_{\tv,1}=\ker(\GL_2(\mc{O}_{F_{1,\tv}})\to \GL_2(k_\tv))$). Since $U$ is
sufficiently small, we see that $S(U,W_{sm}^\circ)_\mf{m}\ne 0$, so
$S(U,W_{sm})_{\mf m}\ne 0$. Again, by Lemma \ref{lem: relationship of algebraic automorphic forms to classical
    automorphic forms} and Theorem \ref{thm: base change from
  G to GL} we see that there is a RACSDC automorphic representation
$\pi'_1$ of $\GL_2(\A_{F_1})$ of weight $0$ such that
$\rbar_{l,\imath}(\pi'_1)\cong \rbar_{\mf{m}}$ (the fact that
$\rbar_{\mf{m}}$ is irreducible allows us to deduce that $\pi_1'$ is cuspidal). After possibly making a
further solvable base change, we can assume that in addition to the
properties of $F_1$ listed above,
\begin{itemize}
  \item if $v|l$ is a place of $F_1$ such that $\pi'_{1,v}$ is ramified, then
  $\pi'_{1,v}$ is an unramified twist of the Steinberg representation.
\end{itemize}

We now repeat the argument above with $\pi_1$ replaced by $\pi'_1$
and hence with $a$ replaced by $0$.
By Lemma 3.1.5 of \cite{kis04}, we can choose $W_{sm}^\circ$ to be of the
form $W_{sm}=\otimes_{v\in S_l} W_{sm,v}^\circ \circ \imath_{\tv}$ where each $W_{sm,v}^\circ$ is a
cuspidal $F_{\tv}$-type (in the sense of \emph{loc.\ cit.}).
We see that there is a RACSDC automorphic representation
$\pi_1''$ of $\GL_2(\A_{F_1})$ of weight 0 such that
$\rbar_{l,\imath}(\pi_1'')\cong \rbar_{\mf{m}}$, and for each place
$v|l$, $\pi_{1,v}''$ is supercuspidal; so, after making another solvable
base change, we may assume that
\begin{itemize}
\item $\pi_{1,v}''$ is unramified for all
$v|l$.
\end{itemize} 
Summarising, we have obtained a solvable extension $F_1/F$ of CM
fields, and a RACSDC automorphic representation $\pi_1''$ of $\GL_2(\A_{F_1})$ such that
\begin{itemize}
\item $F_1$ is linearly disjoint from $\Favoid$ over $F$.
\item $\pi_1''$ has weight 0.
  \item $\rbar_{l,\imath}(\pi_1'')\cong \rbar_{l,\imath}(\pi)|_{G_{F_1}}$. 
\end{itemize}By Theorem \ref{thm: existence of Galois reps attached to
  RACSDC}, we see that for each place $v|l$ we have
\begin{itemize}
\item $r_{l,\imath}(\pi_1'')|_{G_{F_{1,v}}}$ is crystalline, and for
  each embedding $\tau:F_{1,v}\into\Qlbar$, we have
  $\HT_\tau(r_{l,\imath}(\pi_1'')|_{G_{F_{1,v}}})=\{0,1\}$.
\end{itemize}

Making a further base change, we may assume in addition that
\begin{itemize}

\item For each place $v|l$ of $F_1$,  $\rbar_{l,\imath}(\pi_1'')|_{G_{F_{1,v}}}$ is
  trivial, and there are crystalline representations $\rho_1$,
  $\rho_2:G_{F_{1,v}}\to\GL_2(\cO_\Qlbar)$ such that
  \begin{itemize}
  \item $\rhobar_1=\rhobar_2$ is the trivial representation.
  \item $\rho_1$ and $\rho_2$ are both diagonal.
  \item $\rho_1$ is ordinary, and $\rho_2$ is non-ordinary.
  \item For each $\tau: F_{1,v}\into\Qlbar$, $\HT_\tau(\rho_1)=\HT_\tau(\rho_2)=\{0,1\}$.
  \end{itemize}
\end{itemize}
From the existence of $\rho_1$ and $\rho_2$, Proposition 2.3 of
\cite{MR2280776}, and Corollary 2.5.16 of \cite{kis04}, it follows that
\begin{itemize}
\item For each place $v|l$,  $r_{l,\imath}(\pi_1'')|_{G_{F_{1,v}}}$ is diagonalizable.
\end{itemize}
The result follows, taking $F'=F_1$ and $\pi'=\pi_1''$.
\end{proof}
The following Theorem will allow us to ``change the weight'' of a
modular Galois representation. For the notion of an \emph{adequate}
subgroup of $\GL_2(\Flbar)$, which was originally defined in
\cite{jack}, we refer the reader to Appendix \ref{app:adequacy}, where a
detailed discussion of this condition is given. In particular, we
remind the reader that if $l\ge 7$, any irreducible subgroup of
$\GL_2(\Flbar)$ is adequate.

\begin{thm}\label{thm: existence of lifts, pot diag components}
  Let $l>2$ be prime and let $F$ be a CM field with maximal
  totally real subfield $F^+$. Assume that $\zeta_l \not \in F$ and
  that the extension $F/F^+$ is split at all places dividing $l$. Let
  $S$ be a finite set of finite places of $F^+$ which split in $F$ and
  assume that $S$ contains all the places dividing $l$. For each $v\in
  S$ choose a place $\tv$ of $F$ above $v$.

Suppose that \[\rbar:G_F\to\GL_2(\Flbar)\] is an irreducible
representation which is unramified at all places not lying over $S$ and which satisfies the following properties.
\begin{enumerate}
\item $\rbar$ is automorphic.\item $\rbar(G_{F(\zeta_l)})$ is adequate.
\end{enumerate}
For each $v\in S$, let $\rho_{\tv}:G_{F_\tv}\to\GL_2(\CO_\Qlbar)$ be a
lift of $\rbar|_{G_{F_\tv}}$. Assume that
\begin{itemize}
\item if $v|l$, then $\rho_\tv$ is crystalline and potentially diagonalizable, and if
  $\tau:F_\tv\into\Qlbar$ is any embedding, then $\HT_\tau(\rho_\tv)$
  consists of two distinct integers.\end{itemize}

Then there is a RACSDC automorphic representation $\pi$ of
$\GL_2(\A_F)$ of level prime to $l$ such that
\begin{itemize}
\item $\rbar\cong\rbar_{l,\iota}(\pi)$.
\item $\pi_v$ is unramified for all $v$ not lying over a place of $S$,
  so that $r_{l,\iota}(\pi_v)$ is unramified at all such $v$.
\item $r_{l,\iota}(\pi)|_{G_{F_\tv}}\sim\rho_\tv$ for all $v\in S$. In
  particular, for each place $v|l$,  $r_{l,\iota}(\pi)|_{G_{F_\tv}}$ is
  crystalline and for each embedding $\tau:F_\tv\into\Qlbar$, $\HT_\tau(r_{l,\iota}(\pi)|_{G_{F_\tv}})=\HT_\tau(\rho_\tv)$.
\end{itemize}

\end{thm}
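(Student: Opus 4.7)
The strategy is to combine Lemma \ref{lem: in 2 dimensions, exists a pot diag lift} with the automorphic lifting theorems of \cite{BLGGT}. The point is that those lifting theorems require, as input, an automorphic lift of $\rbar$ that is potentially diagonalizable at every place above $l$; our hypotheses do not provide this directly, but Lemma \ref{lem: in 2 dimensions, exists a pot diag lift} provides exactly such a witness after passing to a suitable solvable extension, which is acceptable input for the \cite{BLGGT} machinery.

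First I would choose an auxiliary finite extension $\Favoid/F$ large enough to absorb the various linear-disjointness conditions needed downstream. Concretely, $\Favoid$ should contain $F(\zeta_l)$, the fixed field of $\ker \rbar$, and the fixed fields cut out by the reductions modulo a high enough power of $l$ of each local lift $\rho_{\tv}$, so that for any finite extension $F_1/F$ linearly disjoint from $\Favoid$ the adequacy of $\rbar(G_{F(\zeta_l)})$ is inherited by $\rbar(G_{F_1(\zeta_l)})$ and the restrictions $\rho_\tv|_{G_{F_{1,\tv'}}}$ remain potentially diagonalizable crystalline lifts of $\rbar|_{G_{F_{1,\tv'}}}$ with distinct Hodge--Tate weights at every place $\tv'$ of $F_1$ above $\tv$. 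Since $\rbar$ is automorphic by hypothesis, we fix a RACSDC representation $\pi_0$ of $\GL_2(\A_F)$ with $\rbar_{l,\iota}(\pi_0) \cong \rbar$.

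Next I would apply Lemma \ref{lem: in 2 dimensions, exists a pot diag lift} to $\pi_0$ with the chosen $\Favoid$, producing a finite solvable CM extension $F_1/F$, linearly disjoint from $\Favoid$ over $F$, and a RACSDC automorphic representation $\pi_1$ of $\GL_2(\A_{F_1})$ of weight $0$ with $\rbar_{l,\iota}(\pi_1) \cong \rbar|_{G_{F_1}}$ and $r_{l,\iota}(\pi_1)|_{G_{F_{1,w}}}$ diagonalizable for every $w|l$ of $F_1$. This is the potentially diagonalizable global witness required by \cite{BLGGT}. With this witness in hand I would then invoke the main automorphic lifting theorem of \cite{BLGGT} (the potentially diagonalizable lifting theorem, in the spirit of Theorem 4.3.1 of loc.\ cit.), applied to $\rbar$ over $F$ itself, with the prescribed local deformations given by $\rho_{\tv}$ for $v \in S$ (and the unramified deformation elsewhere); this produces the desired RACSDC representation $\pi$ of $\GL_2(\A_F)$ satisfying $\rbar \cong \rbar_{l,\iota}(\pi)$ and $r_{l,\iota}(\pi)|_{G_{F_\tv}} \sim \rho_\tv$ for all $v \in S$, with level prime to $l$ since the $\rho_{\tv}$ for $v|l$ are crystalline.

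The main technical obstacle is bookkeeping rather than deep input: one must verify that the hypotheses of the cited \cite{BLGGT} lifting theorem (adequacy, linear disjointness of $F_1$ from all the auxiliary fields, preservation of the local deformation-theoretic conditions under restriction from $G_F$ to $G_{F_1}$, and the distinct Hodge--Tate weights condition) are all simultaneously met, which is ensured by taking $\Favoid$ large enough in the very first step. A secondary subtlety is that at the places of $F^+$ that are inert in $F$ no constraints are imposed by our hypotheses, so one must be careful that the choice of local deformation there (which can be taken to be an unramified lift) is compatible with the quasi-split unitary group setup used to define modularity; this causes no trouble because we are free to enlarge $S$ and to allow arbitrary local behavior at the extra places.
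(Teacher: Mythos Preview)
Your overall strategy coincides with the paper's: use Lemma~\ref{lem: in 2 dimensions, exists a pot diag lift} to produce, over a solvable CM extension $F'/F$ linearly disjoint from $\barF^{\ker\rbar}(\zeta_l)$, a RACSDC $\pi'$ with $\rbar_{l,\iota}(\pi')\cong\rbar|_{G_{F'}}$ and $r_{l,\iota}(\pi')$ diagonalizable at every place above $l$; then feed this into a lifting theorem in the style of \cite{BLGGT} to build the global lift $r$ with the prescribed local behaviour, and descend the automorphy of $r|_{G_{F'}}$ to $r$ by solvable base change. (The paper also records the preliminary step of extending $\rbar$ to $\rhobar:G_{F^+}\to\CG_2(\barFF_l)$ via \cite{belchen}, needed to put oneself in the $\CG_n$-framework of \cite{BLGGT}.)

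The gap is in the sentence ``invoke the main automorphic lifting theorem of \cite{BLGGT} \dots\ in the spirit of Theorem~4.3.1''. Theorem~4.3.1 of \cite{BLGGT} does \emph{not} take a potentially diagonalizable automorphic witness as an input hypothesis; it manufactures one internally via potential automorphy (their Proposition~3.3.1), and for that step it assumes $l\ge 2(d+1)$ where $d$ is the maximal dimension of an irreducible constituent of $\barFF_l^n$ for the subgroup of $\rbar(G_{F(\zeta_l)})$ generated by $l$-power-order elements. For $n=2$ this forces $l\ge 7$. Under the hypotheses of the theorem you are proving (only $l>2$ and adequacy of $\rbar(G_{F(\zeta_l)})$), a direct citation of \cite{BLGGT} Theorem~4.3.1 is therefore illegitimate when $l\in\{3,5\}$. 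The paper deals with exactly this by proving a variant, Theorem~\ref{diaglift}, which replaces the $l\ge 2(d+1)$ hypothesis by ``$\breve{\rbar}(G_{F(\zeta_l)})$ adequate'' \emph{together with} the existence of an $F'$ and $(\pi',\chi')$ as above; your witness $\pi_1$ is precisely what is needed to feed into that variant, but you must cite (and, in a self-contained paper, prove) that variant rather than the original Theorem~4.3.1.

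Two minor comments. First, your $\Favoid$ is more elaborate than necessary: crystallinity, potential diagonalizability, and distinctness of Hodge--Tate weights for $\rho_\tv$ are preserved under restriction to any finite extension, so no linear-disjointness condition is needed for them; $\Favoid=\barF^{\ker\rbar}(\zeta_l)$ suffices (it ensures $\rbar(G_{F'(\zeta_l)})=\rbar(G_{F(\zeta_l)})$ remains adequate). Second, the remark about inert places of $F^+$ is a red herring: in the $\CG_n$-deformation setup of \cite{BLGGT} the local conditions are imposed only at the chosen split places in $S$, and no separate treatment of inert places is required.
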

\begin{proof}Let $\cG_2$ be the group scheme over $\Z$ defined in
  section 2.1 of \cite{cht}. Then by the main result of
  \cite{belchen}, $\rbar$ extends to a representation
  $\rhobar:G_{F^+}\to\cG_2(\Flbar)$ with multiplier
  $\epsilonbar_l^{-1}$.

By Lemma \ref{lem: in 2 dimensions, exists a pot diag lift}, we may
find a finite solvable extension $F'/F$ of CM fields and a RACSDC
automorphic representation $\pi'$ of $\GL_2(\A_{F'})$ such that
\begin{itemize}
\item $\rbar_{l,\iota}(\pi')\cong\rbar|_{G_{F'}}$.

\item $F'$ is linearly disjoint from $\bar{F}^{\ker \rbar}(\zeta_l)$
  over $F$.
\item $\pi'$ is unramified at all finite places.
\item For each place $w|l$ of $F'$, $r_{l,\imath}(\pi')|_{G_{F_w}}$ is crystalline and
  potentially diagonalizable.
\end{itemize}

We now apply Theorem \ref{diaglift} below, with
\begin{itemize}
\item $F$, $F'$, $S$ and $l$ as in the present setting,
\item $n=2$,
\item $\rbar$ our present $\rhobar$,
\item $\rho_v$ our $\rho_\tv$,
\item $\mu=\epsilon_l^{-1}$.
\end{itemize}

We conclude that there is a lift $r:G_F\to\GL_2(\CO_\Qlbar)$ (the
restriction to $G_F$ of the representation $r$ produced by Theorem
\ref{diaglift}) of $\rbar$ such that
\begin{itemize}
\item $r^c\cong r^\vee\epsilon_l^{-1}$,
\item if $v\in S$ then $r|_{G_{F_\tv}}\sim \rho_\tv$,
\item $r$ is unramified outside $S$.
\item $r|_{G_{F'}}$ is automorphic of level potentially prime to $l$.
\end{itemize}
Since the extension $F'/F$ is solvable,  we deduce that $r$ is
automorphic. Let $\pi$ be the RACSDC automorphic representation of
$\GL_2(\A_F)$ with $r_{l,\iota}(\pi)\cong r$.
By Theorem \ref{thm: existence of
  Galois reps attached to RACSDC}, we see that (since $r|_{G_{F_w}}$
is crystalline for all $w|l$, and unramified at all places $w$ not
lying over a place in $S$)
$\pi_w$ is unramified for all $w|l$ and all $w$ not lying over a place
in $S$, as required.\end{proof}

\section{Serre weight
  conjectures}\label{sec:Conjectures}\subsection{}We now recall
various formulations of Serre weight conjectures for $\GL_2$,
following \cite{bdj}, \cite{MR2430440}, \cite{gee061}, and
\cite{GHS}. These conjectures were formulated for various inner forms
of $\GL_2$ (indefinite and definite quaternion algebras), but it is
widely believed that they should also apply to outer forms of $\GL_2$,
such as the groups considered in the present paper. These conjectures
all consist of purely local descriptions of sets of weights, in a
sense which we will now explain (as in the rest of the paper, we work
with unitary groups, but the local formulations are the same as for inner
forms of $\GL_2$ which are split at all places lying over $l$).

Let $K$ be a finite extension of $\Ql$, with ring of integers $\cO_K$
and residue field $k$. Let $\rhobar:G_K\to\GL_2(\Flbar)$ be a
continuous representation. Then it is a folklore conjecture that there
is a set $W(\rhobar)$ of Serre weights of $\GL_2(k)$ with the property
that if $F$ is a CM field and $\rbar:G_F\to\GL_2(\Flbar)$ is an
irreducible modular representation (so in particular it is conjugate
self-dual), and $v|l$ is a place of $F$, then $\rbar$ is modular of
some Serre weight $\sigma_w\otimes_{\Flbar}\sigma^w$ (where $\sigma_w$
is a representation of $\GL_2(k_w)$) for some $\sigma^w$ if and only
if $\sigma_w\in W(\rbar|_{G_{F_w}})$.

It is natural to believe that there is a description of $W(\rhobar)$ in terms of the
existence of crystalline lifts with particular Hodge-Tate weights, as
we now explain. This is one of the motivations for the general Serre
weight conjectures explained in \cite{GHS}.

\begin{defn}
  \label{defn: Galois representation of Hodge type some weight}Let
  $K/\Ql$ be a finite extension, let
  $\lambda\in(\Z^2_+)^{\Hom(K,\Qlbar)}$, and let
  $\rho:G_K\to\GL_2(\Qlbar)$ be a de Rham representation. Then we say
  that $\rho$ has \emph{Hodge type} $\lambda$ if for each
  $\tau\in\Hom(K,\Qlbar)$, we have $\HT_\tau(\rho)=\{\lambda_{\tau,1}+1,\lambda_{\tau,2}\}$.
\end{defn}
\begin{rem}\label{rem: hodge type of auto Galois rep}
  As an immediate consequence of the definition and of Theorem
  \ref{thm: existence of Galois reps attached to RACSDC}, we see that
  if $\pi$ is a RACSDC automorphic representation of weight
  $\lambda\in(\Z^2_+)_0^{\Hom(F,\C)}$, then for each place $w|l$,
  $r_{l,\imath}(\pi)|_{G_{F_w}}$ has Hodge type $(\imath^{-1}\lambda)_w$.
\end{rem}

\begin{lemma}\label{lem:modularofaweightimpliescrystallinelifts -
    before the definition of the set of weights}
  Let $F$ be an imaginary CM field with maximal totally real subfield
  $F^+$, and suppose that $F/F^+$ is unramified at all finite places,
  that every place of $F^+$ dividing $l$ splits completely in $F$,
  and that $[F^+:\Q]$ is even. Suppose that $l>2$, and that
  $\rbar:G_F\to\GL_2(\Flbar)$ is an irreducible modular
  representation with split ramification. Let $a\in(\Z^2_+)_0^{\coprod_{w|l}\Hom(k_w,\Flbar)}$ be a
  Serre weight, and let $\lambda\in(\Z^2_+)_0^{\Hom(F,\Qlbar)}$ be a
  lift of $a$. If $\rbar$ is modular of weight $a$, then for each
  place $w|l$ there is a continuous lift $r_w:G_{F_w}\to\GL_2(\Qlbar)$
  of $\rbar|_{G_{F_w}}$
  such that
  \begin{itemize}

  \item $r_w$ is crystalline.
  \item $r_w$ has Hodge type $\lambda_w$.  

  \end{itemize}

\end{lemma}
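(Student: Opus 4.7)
The plan is to reduce directly to Lemma \ref{lem: equivalence of modular of Serre weight and RACSDC lift} together with Theorem \ref{thm: existence of Galois reps attached to RACSDC} and Remark \ref{rem: hodge type of auto Galois rep}. Since $\rbar$ is modular of weight $a$ and has split ramification, Lemma \ref{lem: equivalence of modular of Serre weight and RACSDC lift} produces a RACSDC automorphic representation $\pi$ of $\GL_2(\A_F)$ of weight $\imath\lambda$ and level prime to $l$ such that $\rbar_{l,\imath}(\pi) \cong \rbar$.

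Now set $r_w := r_{l,\imath}(\pi)|_{G_{F_w}}$ for each place $w \mid l$ of $F$. Then $r_w$ is a lift of $\rbar|_{G_{F_w}}$ by construction. Since $\pi$ has level prime to $l$, part (2) of Theorem \ref{thm: existence of Galois reps attached to RACSDC} guarantees that $r_{l,\imath}(\pi)$ is crystalline at every place dividing $l$, so $r_w$ is crystalline. Finally, Remark \ref{rem: hodge type of auto Galois rep} tells us that $r_{l,\imath}(\pi)|_{G_{F_w}}$ has Hodge type $(\imath^{-1}(\imath\lambda))_w = \lambda_w$, which is exactly what is required.

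Thus the lemma is essentially a repackaging of the existence of Galois representations attached to RACSDC automorphic representations together with the equivalence between modularity of weight $a$ on the unitary group and the existence of a RACSDC lift of weight $\imath\lambda$ and level prime to $l$. There is no real obstacle: all the heavy lifting is already done by the results cited. The only small point to check is the precise matching of Hodge-Tate weights, which follows immediately from the normalization in Definition \ref{defn: Galois representation of Hodge type some weight} and the formula $\HT_\tau(r_{l,\imath}(\pi))=\{\lambda_{\imath\tau,1}+1,\lambda_{\imath\tau,2}\}$ of Theorem \ref{thm: existence of Galois reps attached to RACSDC}.
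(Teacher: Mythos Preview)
Your proof is correct and follows essentially the same approach as the paper: invoke Lemma \ref{lem: equivalence of modular of Serre weight and RACSDC lift} to obtain a RACSDC representation $\pi$ of weight $\imath\lambda$ and level prime to $l$, then take $r_w := r_{l,\imath}(\pi)|_{G_{F_w}}$ and appeal to Remark \ref{rem: hodge type of auto Galois rep}. The paper's proof is slightly terser, folding the crystallinity statement into the citation of the remark, but the argument is the same.
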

\begin{proof}
  By Lemma \ref{lem: equivalence of modular of Serre weight and
    RACSDC lift} there is a RACSDC automorphic representation $\pi$ of
  $\GL_2(\A_F)$, which has level prime to $l$ and weight
  $\imath\lambda$, such that $\rbar_{l,\imath}(\pi)\cong\rbar$. Then
  we may take $r_w:=r_{l,\imath}(\pi)|_{G_{F_w}}$, which satisfies the
  above conditions by Remark \ref{rem: hodge type of auto Galois rep}.\end{proof}

This suggests the following definition, first made in \cite{gee061}.
\begin{defn}Let $K$ be a finite extension of $\Ql$, with ring of
  integers $\cO_K$ and residue field $k$. Let
  $\rhobar:G_K\to\GL_2(\Flbar)$ be a continuous representation. Then
  we let $\Wcris(\rhobar)$ be the set of Serre weights
  $a\in(\Z_+^2)^{\Hom(k,\Flbar)}$ with the property that there is a
  crystalline representation
  $\rho:G_K\to\GL_2(\Qlbar)$ lifting $\rhobar$, such that
  \begin{itemize}
  \item $\rho$ has Hodge type $\lambda$ for some lift
    $\lambda\in(\Z^2_+)^{\Hom(K,\Qlbar)}$ of $a$.

  \end{itemize}

\end{defn}

The results of section \ref{sec:A lifting theorem} inspire the
following definition.

\begin{defn}Let $K$ be a finite extension of $\Ql$, with ring of
  integers $\cO_K$ and residue field $k$. Let
  $\rhobar:G_K\to\GL_2(\Flbar)$ be a continuous representation. Then
  we let $\Wdiag(\rhobar)$ be the set of Serre weights
  $a\in(\Z_+^2)^{\Hom(k,\Flbar)}$ with the property that there is a
  continuous potentially diagonalizable crystalline representation
  $\rho:G_K\to\GL_2(\Qlbar)$ lifting $\rhobar$, such that
  \begin{itemize}
  \item  $\rho$ has Hodge type $\lambda$ for some lift
    $\lambda\in(\Z^2_+)^{\Hom(K,\Qlbar)}$ of $a$.

  \end{itemize}

\end{defn}
\begin{remark} 
Note that if a lift $\rho$ exists for one such $\lambda$, then
  composition of this lift with automorphisms of $\Qlbar$ provides a
  lift for any other choice of $\lambda$. If $a$ and $b$ are
  equivalent Serre weights, then $a\in\Wcris(\rhobar)$ (respectively
  $\Wdiag(\rhobar)$) if and only if $b\in\Wcris(\rhobar)$
  (respectively $\Wdiag(\rhobar)$). This is an easy consequence of
  Lemma \ref{lem:existence of crystalline chars} below, which provides
  a crystalline character with trivial reduction by which one can twist
  the crystalline Galois representations of Hodge type some lift of
  $a$ to obtain crystalline representations of Hodge type some lift of
  $b$. The same remarks apply to the set $\Wexplicit(\rhobar)$ defined
  below.
\end{remark}

Thus by definition we have $\Wdiag(\rhobar)\subset \Wcris(\rhobar)$. We
``globalise'' these definitions in the obvious way:
\begin{defn}\label{defn: serre weights global}Let $\rbar:G_F\to\GL_2(\Flbar)$ be a continuous
  representation with $\rbar^c\cong\rbar^\vee\epsilonbar_l^{-1}$. Then we let $\Wcris(\rbar)$ (respectively
  $\Wdiag(\rbar)$) be the set of Serre weights
  $a\in(\Z^2_+)_0^{\coprod_{w|l}\Hom(k_w,\Flbar)}$ such that for each
  place $w|l$, the corresponding Serre weight
  $a_w\in(\Z^2_+)^{\Hom(k_w,\Flbar)}$ is an element of
  $\Wcris(\rbar|_{G_{F_w}})$ (respectively $\Wdiag(\rbar|_{G_{F_w}})$).
  
\end{defn}
The
point of these definitions is the following corollary and theorem.

\begin{cor}
  \label{cor: modular of some weight implies crystalline lifts exist}Let $F$ be an imaginary CM field with maximal totally real subfield
  $F^+$, and suppose that $F/F^+$ is unramified at all finite places,
  that every place of $F^+$ dividing $l$ splits completely in $F$,
  and that $[F^+:\Q]$ is even. Suppose that $l>2$, and that
  $\rbar:G_F\to\GL_2(\Flbar)$ is an irreducible modular
  representation with split ramification. Let $a\in(\Z^2_+)_0^{\coprod_{w|l}\Hom(k_v,\Flbar)}$ be a
  Serre weight. If $\rbar$ is modular of weight $a$, then $a\in \Wcris(\rbar)$.
\end{cor}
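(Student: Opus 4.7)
The proof is essentially a direct unpacking of definitions, combined with the lemma just proved. My plan is as follows.

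First, I would pick any lift $\lambda \in (\Z^2_+)_0^{\Hom(F,\Qlbar)}$ of the Serre weight $a$; such a lift exists by inspection of the definitions (for each $w|l$ and each $\sigma \in \Hom(k_w,\Flbar)$, choose one extension $\tau$ of $\sigma$ to $F_w \into \Qlbar$ and set $\lambda_\tau = a_\sigma$, with all other $\lambda_{\tau'}$ above $\sigma$ equal to $0$; the conjugate self-dual condition on $a$ determines the components at the complex conjugate embeddings). By construction, $\lambda_w$ is a lift of $a_w$ in the sense required by the definition of $\Wcris$.

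Next, since $\rbar$ is assumed modular of weight $a$, Lemma \ref{lem:modularofaweightimpliescrystallinelifts - before the definition of the set of weights} applies and produces, for each place $w|l$, a continuous crystalline lift $r_w : G_{F_w} \to \GL_2(\Qlbar)$ of $\rbar|_{G_{F_w}}$ of Hodge type $\lambda_w$. This is exactly the condition required for $a_w$ to belong to $\Wcris(\rbar|_{G_{F_w}})$: there is a crystalline representation lifting $\rbar|_{G_{F_w}}$ whose Hodge type is a lift (namely $\lambda_w$) of $a_w$.

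Finally, since this holds for every place $w|l$, Definition \ref{defn: serre weights global} gives $a \in \Wcris(\rbar)$, completing the proof. There is no real obstacle here; the corollary is a packaging of Lemma \ref{lem:modularofaweightimpliescrystallinelifts - before the definition of the set of weights} into the language of the locally defined set $\Wcris$, and the only thing to check is that the global lift $\lambda$ of $a$ restricts place-by-place to lifts of the local components $a_w$, which is immediate from the definitions.
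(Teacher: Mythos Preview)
Your proof is correct and follows exactly the approach of the paper, which simply cites Lemma \ref{lem:modularofaweightimpliescrystallinelifts - before the definition of the set of weights} and Definition \ref{defn: serre weights global}. You have merely unpacked these references in slightly more detail.
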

\begin{proof}
  This is an immediate consequence of Lemma \ref{lem:modularofaweightimpliescrystallinelifts -
    before the definition of the set of weights} and Definition \ref{defn: serre weights global}.
\end{proof}
\begin{thm}
  \label{thm: potentially diagonalizable local lifts implies Serre
    weight}Let $F$ be an imaginary CM field with maximal totally real subfield
  $F^+$. Assume that $\zeta_l\notin F$, that $F/F^+$ is unramified at all finite places,
  that every place of $F^+$ dividing $l$ splits completely in $F$,
  and that $[F^+:\Q]$ is even. Suppose that $l>2$, and that
  $\rbar:G_F\to\GL_2(\Flbar)$ is an irreducible modular
  representation with split ramification. Assume that $\rbar(G_{F(\zeta_l)})$ is adequate.

 Let $a\in(\Z^2_+)_0^{\coprod_{w|l}\Hom(k_w,\Flbar)}$ be a
  Serre weight. Assume that $a\in \Wdiag(\rbar)$. Then $\rbar$ is
  modular of weight $a$. 
\end{thm}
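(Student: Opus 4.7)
The plan is to feed the hypothesis $a\in\Wdiag(\rbar)$ into the lifting theorem \ref{thm: existence of lifts, pot diag components} to produce a RACSDC automorphic representation of $\GL_2(\A_F)$ with prescribed Hodge type, then invoke Lemma \ref{lem: equivalence of modular of Serre weight and RACSDC lift} to convert that RACSDC lift into modularity of weight $a$. Essentially, the theorem should be a direct consequence of the local-global machinery of the previous section together with the definition of $\Wdiag$; all the real work is done by the input from \cite{BLGGT}.

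Concretely, I would first choose a lift $\lambda\in(\Z^2_+)_0^{\Hom(F,\Qlbar)}$ of $a$, which exists by definition. For each place $v|l$ of $F^+$, split in $F$ as $\tv\tv^c$, the hypothesis $a_\tv\in\Wdiag(\rbar|_{G_{F_\tv}})$ together with the remark following the definition of $\Wdiag$ (permuting embeddings via automorphisms of $\Qlbar$) yields a crystalline, potentially diagonalizable lift $\rho_\tv:G_{F_\tv}\to\GL_2(\cO_\Qlbar)$ of $\rbar|_{G_{F_\tv}}$ of Hodge type exactly $\lambda_\tv$. Note that the two Hodge--Tate numbers $\{\lambda_{\tau,1}+1,\lambda_{\tau,2}\}$ are automatically distinct since $\lambda_{\tau,1}\geq\lambda_{\tau,2}$, so the distinctness hypothesis required by Theorem \ref{thm: existence of lifts, pot diag components} comes for free.

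Next, let $S$ be the set of finite places of $F^+$ comprising all $v|l$ together with all places at which $\rbar$ ramifies; by the split-ramification hypothesis these are all split in $F$. Because $\rbar$ is modular, pick any RACSDC representation $\pi_0$ with $\rbar_{l,\imath}(\pi_0)\cong\rbar$ and, for each $v\in S\setminus S_l$, simply set $\rho_\tv:=r_{l,\imath}(\pi_0)|_{G_{F_\tv}}$ — the lifting theorem imposes no further constraint at such places, so any lift will do. I would then apply Theorem \ref{thm: existence of lifts, pot diag components} with this data (the hypotheses $\zeta_l\notin F$ and $\rbar(G_{F(\zeta_l)})$ adequate are available by assumption) to obtain a RACSDC representation $\pi$ of level prime to $l$ with $\rbar_{l,\imath}(\pi)\cong\rbar$, unramified outside $S$ (so of split ramification), and satisfying $r_{l,\imath}(\pi)|_{G_{F_\tv}}\sim\rho_\tv$ for every $v\in S$. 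In particular $r_{l,\imath}(\pi)|_{G_{F_\tv}}$ has Hodge type $\lambda_\tv$ at every $v|l$, so by Theorem \ref{thm: existence of Galois reps attached to RACSDC} the weight of $\pi$ is forced to be $\imath\lambda$. Lemma \ref{lem: equivalence of modular of Serre weight and RACSDC lift} then delivers the conclusion that $\rbar$ is modular of weight $a$.

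There is no real obstacle left once the framework of Section \ref{sec:A lifting theorem} is in place: the only subtle point is the verification that the local data $(\rho_\tv)_{v\in S}$ genuinely satisfies the hypotheses of Theorem \ref{thm: existence of lifts, pot diag components} — potentially diagonalizable and crystalline of distinct Hodge--Tate weights at places dividing $l$ — and this is precisely the content of the assumption $a\in\Wdiag(\rbar)$. The hard work is hidden inside the lifting theorem itself, which we are treating as a black box.
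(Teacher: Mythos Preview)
Your proposal is correct and follows essentially the same route as the paper: unpack the definition of $\Wdiag(\rbar)$ to obtain potentially diagonalizable crystalline lifts of the right Hodge type at places above $l$, feed these (together with arbitrary lifts at the remaining ramified split places, whose existence is guaranteed by modularity) into Theorem~\ref{thm: existence of lifts, pot diag components}, and then invoke Lemma~\ref{lem: equivalence of modular of Serre weight and RACSDC lift}. The only difference is that you spell out a few points the paper leaves implicit (distinctness of the Hodge--Tate weights, and that the Hodge types force $\pi$ to have weight $\imath\lambda$), but the argument is the same.
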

\begin{proof}By the assumption that $a\in \Wdiag(\rbar)$, there is a
  lift $\lambda$ of $a$ such that for each
  $w|l$ there is a potentially
  diagonalizable crystalline lift $\rho_w:G_{F_w}\to\GL_2(\Qlbar)$ of
  $\rbar|_{G_{F_w}}$ of Hodge type $\lambda_w$.

By Theorem \ref{thm: existence of lifts, pot diag components} 
(applied with the set $S$ of that theorem being the set of places dividing $l$
together with the places at which $\rbar$ is ramified, and taking the
lifts $\rho_{\tv}$ to be those defined in the previous paragraph for $v|l$ and
arbitrary for $v$ not dividing $l$, noting that the fact that $\rbar$ is modular
guarantees the existence of lifts),
there is a RACSDC
automorphic representation $\pi$ of $\GL_2(\A_F)$ of weight
$\imath\lambda$, of level prime to $l$ and with split
ramification, such that $\rbar_{l,\iota}(\pi)\cong\rbar$. The result follows from Lemma \ref{lem: equivalence of modular of Serre weight and
    RACSDC lift}.\end{proof}
The majority of the rest of this paper will be devoted to making this
theorem more explicit. We believe that in fact
$\Wdiag(\rbar)=\Wcris(\rbar)$ in all cases, and we are able to show
strong results in this direction. In addition, we exhibit many
explicit weights in $\Wdiag(\rbar)$ (and again, conjecturally all
such weights). In view of Corollary \ref{cor: modular of some weight
  implies crystalline lifts exist} and Theorem \ref{thm: potentially diagonalizable local lifts implies Serre
    weight} (and the trivial inclusion $\Wdiag(\rbar)\subset
  \Wcris(\rbar)$), we are reduced to purely local questions, so we
  return to the setting of a finite extension $K/\Ql$ with residue
  field $k$ and absolute ramification index $e$, and we fix a continuous
  representation $\rhobar:G_K\to\GL_2(\Flbar)$. We then consider the
  following two questions:
  \begin{itemize}
    \item What is a good lower bound for the set $\Wdiag(\rhobar)$?
    \item What is a good upper bound for the set $\Wcris(\rhobar)$?
  \end{itemize}
If these two questions have the same answer, then the above work gives
a complete determination of the Serre weights of 2-dimensional mod $l$
Galois representations. In particular, we
conjecture (following \cite{GHS}) that the lower bound we provide for
$\Wdiag(\rhobar)$ is also an upper bound for $\Wcris(\rhobar)$.

The papers \cite{bdj}, \cite{MR2430440} and \cite{GHS} all give
explicit conjectural descriptions of $\Wcris(\rhobar)$ in increasing
orders of generality. [Strictly speaking, \cite{bdj} and
\cite{MR2430440} do not phrase their conjectures in the language of
crystalline lifts, but the results above make it reasonable to discuss
their descriptions in this optic; that is, we would like to see
whether their lists of weights can be proved to be lower bounds for
$\Wdiag(\rhobar)$ or upper bounds for $\Wcris(\rhobar)$. We will see
that the lower bound we provide for $\Wdiag(\rhobar)$ agrees with the
sets of weights predicted in \cite{bdj} and \cite{MR2430440} in most
cases, and conjecturally in all cases.] We now recall these
conjectures.

 We begin by defining the
fundamental characters of the inertia group of a finite extension $K$ of
$\Ql$. For each $\sigma\in \Hom(k,\Flbar)$ we
define the fundamental character $\omega_{\sigma}$ corresponding to
$\sigma$ to be the composite $$\xymatrix{I_{K^{\ab}/K}\ar[r]^{\Art_K^{-1}} &
  \bigO_{K}^{\times}\ar[r] & k^{\times}\ar[r]^{\sigma^{-1}} &
  \Flbar^{\times}.}$$ Let $K'$ denote the quadratic unramified
extension of $K$ inside $\Qlbar$, with residue field
$k'\subset\Flbar$.

We now recall a slight variant of the conjectures of
\cite{bdj}, who associate a set of weights to any continuous
representation $\rhobar:G_K\to\GL_2(\Flbar)$ in the case that $K/\Ql$
is unramified. We define a set of weights $\WBDJ(\rhobar)$ as follows:

\begin{defn}
  \label{defn: BDJ niveau 2} Assume that $K/\Ql$ is unramified. If $\rhobar$ is irreducible, then a Serre
  weight $a\in(\Z^2_+)^{\Hom(k,\Flbar)}$ is in $\WBDJ(\rhobar)$ if
  and only if there is a subset $J\subset\Hom(k',\Flbar)$ consisting of
  exactly one embedding extending each element of $\Hom(k,\Flbar)$, such that if we write
  $\Hom(k',\Flbar)=J\coprod J^c$, then (where here and below, if
  $\sigma\in\Hom(k',\Flbar)$ we write $a_{\sigma,i}$ for $a_{\sigma|_k,i}$)  \[\rhobar|_{I_K}\cong
  \begin{pmatrix}\prod_{\sigma\in
      J}\omega_{\sigma}^{a_{\sigma,1}+1}\prod_{\sigma\in
      J^c}\omega_\sigma^{a_{\sigma,2}}&0\\ 0& \prod_{\sigma\in
      J^c}\omega_\sigma^{a_{\sigma,1}+1}\prod_{\sigma\in
      J}\omega_\sigma^{a_{\sigma,2}}   
  \end{pmatrix}.\]
\end{defn}If $\tau\in \Hom(K,\Qlbar)$, we let
$\overline{\tau}$ be the induced element of $\Hom(k,\Flbar)$.

\begin{defn}
  \label{defn: BDJ niveau 1}Assume that $K/\Ql$ is unramified. If $\rhobar$ is reducible, then a Serre
  weight $a\in(\Z^2_+)^{\Hom(k,\Flbar)}$ is in $\WBDJ(\rhobar)$ if
  and only if there is a subset $J\subset\Hom(k,\Flbar)$ such that
  $\rhobar$ has a crystalline lift of the form \[  \begin{pmatrix}\psi_1&*\\ 0& \psi_2
  \end{pmatrix}\]where $\HT_\tau(\psi_1)=a_{\taubar,1}+1$ if $\tau\in
  J$ and $a_{\taubar,2}$ otherwise, and
  $\HT_\tau(\psi_2)=a_{\taubar,2}$ if $\tau\in J$, and
  $a_{\taubar,1}+1$ otherwise. In particular, if we write
  $\Hom(k,\Flbar)=J\coprod J^c$ and $a\in \WBDJ(\rhobar)$ then we necessarily have  \[\rhobar|_{I_K}\cong
  \begin{pmatrix}\prod_{\sigma\in
      J}\omega_\sigma^{a_{\sigma,1}+1}\prod_{\sigma\in
      J^c}\omega_\sigma^{a_{\sigma,2}}&*\\ 0& \prod_{\sigma\in
      J^c}\omega_\sigma^{a_{\sigma,1}+1}\prod_{\sigma\in
      J}\omega_\sigma^{a_{\sigma,2}}

  \end{pmatrix}.\]
\end{defn}

[The description of $\rhobar|_{I_K}$ in the reducible case is
immediate from Lemma \ref{lem:existence of crystalline chars} below
(see also Lemma \ref{lem:explicit semisimple crystalline lifts in niveau 1}). To see
the relationship of these definitions to those of \cite{bdj} is
straightforward. In the irreducible case, it follows at once from
equation 3.1(1) of \cite{bdj} that our description agrees with that of
\cite{bdj} (where the set that we denote $\WBDJ(\rhobar)$ is called $W_{\mf{p}}(\rho)$). 

In the reducible case, it is possible that our set
$\WBDJ(\rhobar)$ differs from the set proposed in \cite{bdj},
although it is conjectured in \cite{bdj} that this is not the case,
and in any case we shall see below that $\WBDJ(\rhobar)\subset
\Wdiag(\rhobar)$. Suppose firstly that $\rhobar$ is not a twist of
an extension of the trivial character by either the trivial character
or the cyclotomic character. Then the definition of $W_{\mf{p}}(\rho)$
in \cite{bdj} agrees with our $\WBDJ(\rhobar)$, except that
\cite{bdj} make an additional prescription on the character
$\psi_1\psi_2^{-1}$ (they demand that it takes a certain value on a fixed
Frobenius element). However, Remark 3.10 of \cite{bdj} explains that
in most cases these two formulations are equivalent, and conjectures
that they are always equivalent.

In the remaining cases, it is not immediately clear that our
definitions agree, although the authors of \cite{bdj} have indicated
to us that they conjecture that they agree, and that their definition
is intended as a refinement of the definition given here. The
definition given in \cite{bdj} is better suited to comparisons of the
sets $\WBDJ(\rhobar)$ as $\rhobar$ varies over representations with
the same semisimplification.]

We now turn to the formulation given in \cite{MR2430440}. We drop the
assumption that $K/\Ql$ is unramified, but assume instead that
$\rhobar|_{I_K}$ is semisimple. In this case a set $\WSch(\rhobar)$
of Serre weights is proposed in \cite{MR2430440} as follows.

\begin{defn}
  \label{defn: Schein  niveau 2} If $\rhobar$ is irreducible, then a Serre
  weight $a\in(\Z^2_+)^{\Hom(k,\Flbar)}$ is in $\WSch(\rhobar)$ if
  and only if there is a subset $J\in\Hom(k',\Flbar)$ consisting of
  exactly one embedding extending each element of $\Hom(k,\Flbar)$,
  and for each $\sigma\in\Hom(k,\Flbar)$ an integer $0\le
  \delta_\sigma\le e-1$ such that if we write
  $\Hom(k',\Flbar)=J\coprod J^c$, then
(where here and below we write
  $\delta_\sigma$ for $\delta_{\sigma|k}$)  \[\rhobar|_{I_K}\cong
  \begin{pmatrix}\prod_{\sigma\in
      J}\omega_{\sigma}^{a_{\sigma,1}+1+\delta_{\sigma}}\prod_{\sigma\in
      J^c}\omega_\sigma^{a_{\sigma,2}+e-1-\delta_{\sigma}}&0\\ 0& \prod_{\sigma\in
      J^c}\omega_\sigma^{a_{\sigma,1}+1+\delta_{\sigma}}\prod_{\sigma\in
      J}\omega_\sigma^{a_{\sigma,2}+e-1-\delta_{\sigma}}

  \end{pmatrix}.\]
\end{defn}

\begin{defn}
  \label{defn: Schein  niveau 1} If $\rhobar$ is reducible and
  $\rhobar|_{I_K}$ is semisimple, then a Serre
  weight $a\in(\Z^2_+)^{\Hom(k,\Flbar)}$ is in $\WSch(\rhobar)$ if
  and only if there is a subset $J\in\Hom(k,\Flbar)$,
  and for each $\sigma\in\Hom(k,\Flbar)$ an integer $0\le
  \delta_\sigma\le e-1$ such that if we write
  $\Hom(k,\Flbar)=J\coprod J^c$, then  \[\rhobar|_{I_K}\cong
  \begin{pmatrix}\prod_{\sigma\in
      J}\omega_\sigma^{a_{\sigma,1}+1+\delta_\sigma}\prod_{\sigma\in
      J^c}\omega_\sigma^{a_{\sigma,2}+\delta_\sigma}&0\\ 0& \prod_{\sigma\in
      J^c}\omega_\sigma^{a_{\sigma,1}+e-\delta_\sigma}\prod_{\sigma\in
      J}\omega_\sigma^{a_{\sigma,2}+e-1-\delta_\sigma}

  \end{pmatrix}.\]
\end{defn}
[That these agree with the definitions of \cite{MR2430440} is
immediate from the statements of Theorems 2.4 and 2.5 of
\cite{MR2430440} (after replacing $\delta_\sigma$ by
$e-1-\delta_\sigma$ in the case $\sigma\in J^c$).] Finally, following \cite{GHS}, we define an explicit
set of weights $\WGHS(\rhobar)$ in the case that $\rhobar$ is
reducible but not necessarily decomposable when restricted to $I_K$ (without assuming that
$K/\Ql$ is unramified). \begin{defn}
  \label{defn: GHS niveau 1}If $\rhobar$ is reducible, then a Serre
  weight $a\in(\Z^2_+)^{\Hom(k,\Flbar)}$ is in $\WGHS(\rhobar)$ if
  and only if $\rhobar$ has a crystalline lift of the
  form \[ \begin{pmatrix}\psi_1&*\\ 0& \psi_2
  \end{pmatrix}\] which has Hodge type $\lambda$ for some lift
  $\lambda\in(\Z^2_+)^{\Hom(K,\Qlbar)}$ of $a$. In particular, if $a\in \WGHS(\rhobar)$ then it is necessarily the case that there is a decomposition
  $\Hom(k,\Flbar)=J\coprod J^c$ and for each $\sigma\in \Hom(k,\Flbar)$ there is an integer
  $0\le \delta_\sigma\le e-1$ such that \[\rhobar|_{I_K}\cong
  \begin{pmatrix}\prod_{\sigma\in
      J}\omega_\sigma^{a_{\sigma,1}+1+\delta_\sigma}\prod_{\sigma\in
      J^c}\omega_\sigma^{a_{\sigma,2}+\delta_\sigma}&*\\ 0& \prod_{\sigma\in
      J^c}\omega_\sigma^{a_{\sigma,1}+e-\delta_\sigma}\prod_{\sigma\in
      J}\omega_\sigma^{a_{\sigma,2}+e-1-\delta_\sigma}

  \end{pmatrix}.\]
\end{defn}
[Again, the form of $\rhobar|_{I_K}$ is immediate from Lemma
\ref{lem:existence of crystalline chars} below.]
In order to see the relationship between these definitions, we now
study the question of when it is ``obvious'' that one can write down a
crystalline lift with specified Hodge-Tate weights of a given
$\rhobar$. If $\chi$ is a character of $G_{K}$ or $I_{K}$ valued in $\CO_\Qlbar^\times$, we denote
its reduction mod $l$ by $\overline{\chi}$. 

\begin{lem}\label{lem:existence of crystalline chars}Let
  $A=\{a_{\tau}\}_{\tau\in \Hom(K,\Qlbar)}$ be a set of integers. Then
  there is a crystalline character $\epsilon_{A}$ of $G_{K}$
  such that $\HT_{\tau}(\epsilon_{A})=a_{\tau}$ for all $\tau\in
  \Hom(K,\Qlbar)$, and $\epsilon_{A}$ is unique up to unramified
  twist. Furthermore,
  $\epsilonbar_{A}|_{I_{K}}=\prod_{\sigma\in
    \Hom(k,\Flbar)}\omega_{\sigma}^{b_{\sigma}},$ where $$b_{\sigma}=\sum_{\tau\in
    \Hom(K,\Qlbar):\overline{\tau}=\sigma}a_{\tau}.$$
\end{lem}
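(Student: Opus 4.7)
\emph{Proof plan.} The natural approach is to construct $\epsilon_A$ as a product of Lubin--Tate characters, one for each embedding. For each $\tau\in\Hom(K,\Qlbar)$, the Lubin--Tate character $\chi_{LT,K}:G_K\to\CO_K^\times$ (normalised so that via $\Art_K$ a uniformiser of $\CO_K^\times$ acts trivially on the Tate module, or equivalently so that the induced character on inertia corresponds to $u\mapsto u^{-1}$ under $\Art_K^{-1}$) gives, after post-composing with $\tau$, a crystalline character $\chi_\tau:=\tau\circ\chi_{LT,K}$ of $G_K$ valued in $\CO_\Qlbar^\times$. A standard $p$-adic Hodge theory computation shows that $\HT_{\tau}(\chi_\tau)=1$ and $\HT_{\tau'}(\chi_\tau)=0$ for $\tau'\ne\tau$. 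Setting
\[
\epsilon_A \;:=\; \prod_{\tau\in\Hom(K,\Qlbar)}\chi_\tau^{a_\tau}
\]
then gives a crystalline character of $G_K$ with $\HT_\tau(\epsilon_A)=a_\tau$ for every $\tau$.

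For uniqueness, if $\epsilon_A'$ is another such character, then $\epsilon_A/\epsilon_A'$ is a crystalline character of $G_K$ all of whose Hodge--Tate weights are $0$. Any such character is Hodge--Tate of weight $0$ and hence, being crystalline, its restriction to inertia has finite image; since its restriction to $I_K$ is trivial on an open subgroup and takes values in a pro-$l$ group on the wild inertia, it is in fact unramified. Thus $\epsilon_A$ is determined up to unramified twist.

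Finally, to compute $\epsilonbar_A|_{I_K}$, use local class field theory. By construction, $\chi_\tau\circ\Art_K$ restricted to $\CO_K^\times$ is the character $u\mapsto \tau(u)^{-1}$, so
\[
(\epsilon_A\circ\Art_K)|_{\CO_K^\times}(u) \;=\; \prod_{\tau}\tau(u)^{-a_\tau}.
\]
Reducing modulo $\gm_{\Qlbar}$, each $\tau(u)\bmod\gm_{\Qlbar}$ equals $\bar\tau(\bar u)\in k'^\times\subset\Flbar^\times$ where $\bar u\in k^\times$ is the image of $u$; collecting embeddings according to their reductions and comparing with the definition of $\omega_\sigma$ yields
\[
\epsilonbar_A|_{I_K} \;=\; \prod_{\sigma\in\Hom(k,\Flbar)}\omega_\sigma^{b_\sigma},
\qquad b_\sigma=\sum_{\tau:\bar\tau=\sigma}a_\tau,
\]
as required.

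The only delicate step is making sure that all the sign and normalisation conventions (for $\Art_K$, for the Lubin--Tate character, and for the fundamental characters $\omega_\sigma$) are consistent; once that bookkeeping is fixed the rest is routine. The main ``work'' is the standard identification of Hodge--Tate weights of Lubin--Tate characters, which we invoke rather than reprove.
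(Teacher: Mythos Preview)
Your approach is the standard one and is essentially what lies behind the reference the paper cites (the paper itself gives no argument, simply invoking Lemma~6.2 of \cite{geesavitttotallyramified}). The construction of $\epsilon_A$ as a product of Lubin--Tate characters is correct, as is the reduction computation, and you are right to flag the normalisation bookkeeping as the main thing to verify.

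There is, however, a genuine gap in your uniqueness argument. You correctly deduce from $\HT_\tau=0$ for all $\tau$ that $(\epsilon_A/\epsilon_A')|_{I_K}$ has finite image (this is Sen's theorem and does not require crystallinity). But the sentence ``since its restriction to $I_K$ is trivial on an open subgroup and takes values in a pro-$l$ group on the wild inertia, it is in fact unramified'' does not establish unramifiedness: a finite-order tamely ramified character also has finite inertia image, and nothing you wrote rules this out. The crystalline hypothesis must be used again at this step, not just to get finite inertia image. The clean argument is to pass to the associated weakly admissible filtered $\varphi$-module: with all Hodge--Tate weights zero the filtration on $D_{\dR}$ is trivial, so the filtered $\varphi$-module is determined entirely by $\varphi$ acting on a one-dimensional $K_0$-space, which is exactly the data of an unramified character; full faithfulness of $D_{\cris}$ then forces $\epsilon_A/\epsilon_A'$ to be unramified.
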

\begin{proof}
  This is Lemma 6.2 of \cite{geesavitttotallyramified}. [Note that
  the definitions of fundamental characters in this paper are the
  inverse of those defined in section 5
of \cite{geesavitttotallyramified}; this is because our conventions
for Hodge-Tate weights are the opposite of those of
\cite{geesavitttotallyramified}.]
\end{proof}

\begin{lem}\label{lem:explicit semisimple crystalline lifts in niveau 1}
  Suppose that $a\in(\Z^2_+)^{\Hom(k,\Flbar)}$ is a Serre weight, and that $\rhobar:G_K\to\GL_2(\Flbar)$ is
  a continuous 
representation which is a direct sum of two characters. Suppose that there is a
  decomposition $\Hom(k,\Flbar)=J\coprod J^c$ and for each $\sigma\in
  \Hom(k,\Flbar)$ there is an integer $0\le \delta_\sigma\le e-1$ with \[\rhobar|_{I_K}\cong
  \begin{pmatrix}\prod_{\sigma\in
      J}\omega_\sigma^{a_{\sigma,1}+1+\delta_\sigma}\prod_{\sigma\in
      J^c}\omega_\sigma^{a_{\sigma,2}+\delta_\sigma}&0\\ 0& \prod_{\sigma\in
      J^c}\omega_\sigma^{a_{\sigma,1}+e-\delta_\sigma}\prod_{\sigma\in
      J}\omega_\sigma^{a_{\sigma,2}+e-1-\delta_\sigma}

  \end{pmatrix}.\]

Then for any $\lambda\in(\Z^2_+)^{\Hom(K,\Qlbar)}$ lifting $a$,
$\rhobar$ has a diagonal crystalline lift of Hodge type
$\lambda$.
\end{lem}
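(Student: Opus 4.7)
The strategy is to construct crystalline characters $\psi_1, \psi_2$ with $\rhobar \cong \bar\psi_1 \oplus \bar\psi_2$ by specifying their Hodge--Tate weights $\tau$ by $\tau$, using Lemma \ref{lem:existence of crystalline chars}, in such a way that the summation formula there recovers the diagonal entries of the given $\rhobar|_{I_K}$.

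Write $\rhobar = \chi_1 \oplus \chi_2$, with $\chi_1|_{I_K}$ the top diagonal entry in the displayed formula. For each $\sigma \in \Hom(k,\Flbar)$, let $\tau_\sigma \in \Hom(K,\Qlbar)$ be the distinguished embedding in the data of $\lambda$ with $\lambda_{\tau_\sigma} = a_\sigma$, and let $\tau'$ range over the remaining $e-1$ embeddings lifting $\sigma$, all of which satisfy $\lambda_{\tau'} = (0,0)$. At each such $\tau'$, the Hodge type requirement $\HT_{\tau'}(\psi_1 \oplus \psi_2) = \{1,0\}$ leaves us free to choose which of $\psi_1, \psi_2$ gets the weight $1$.

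The allocation is as follows. For $\sigma \in J$: assign $\HT_{\tau_\sigma}(\psi_1) = a_{\sigma,1}+1$ and $\HT_{\tau_\sigma}(\psi_2) = a_{\sigma,2}$; among the $e-1$ embeddings $\tau'$ lifting $\sigma$, give $\psi_1$ weight $1$ at exactly $\delta_\sigma$ of them and weight $0$ at the remaining $e-1-\delta_\sigma$. For $\sigma \in J^c$: assign $\HT_{\tau_\sigma}(\psi_1) = a_{\sigma,2}$ and $\HT_{\tau_\sigma}(\psi_2) = a_{\sigma,1}+1$, and again give $\psi_1$ weight $1$ at exactly $\delta_\sigma$ of the remaining $\tau'$. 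A direct sum then yields
\[
\sum_{\tau:\bar\tau=\sigma} \HT_\tau(\psi_1) = \begin{cases} a_{\sigma,1}+1+\delta_\sigma & \sigma\in J,\\ a_{\sigma,2}+\delta_\sigma & \sigma\in J^c,\end{cases}
\]
and analogously for $\psi_2$, so Lemma \ref{lem:existence of crystalline chars} delivers crystalline characters $\psi_1, \psi_2 : G_K \to \CO_\Qlbar^\times$ whose Hodge--Tate weights sum to the exponents in the displayed formula for $\rhobar|_{I_K}$. By construction $\rho := \psi_1 \oplus \psi_2$ has Hodge type $\lambda$ and $\bar\psi_i|_{I_K} = \chi_i|_{I_K}$.

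Finally, Lemma \ref{lem:existence of crystalline chars} gives $\psi_i$ only up to unramified twist, which is exactly the slack needed to fix the Frobenius. The character $\chi_i \bar\psi_i^{-1}$ is unramified with values in $\Flbar^\times$; lifting it to an unramified (hence crystalline) character $\eta_i : G_K \to \CO_\Qlbar^\times$ via Teichm\"uller and replacing $\psi_i$ by $\psi_i \eta_i$ preserves all Hodge--Tate weights while arranging $\bar\psi_i = \chi_i$. Then $\psi_1 \oplus \psi_2$ is the desired diagonal crystalline lift. There is no genuine obstacle here: the only content is the bookkeeping that the four cases $(\sigma \in J, \text{top})$, $(\sigma\in J, \text{bottom})$, $(\sigma\in J^c, \text{top})$, $(\sigma\in J^c, \text{bottom})$ all come out correctly, which is exactly why the exponents in the statement were designed in the form $a_{\sigma,i}+\delta_\sigma$ or $a_{\sigma,i}+(e-1-\delta_\sigma)$.
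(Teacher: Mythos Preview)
Your proof is correct and follows essentially the same approach as the paper: explicitly prescribe the Hodge--Tate weights of two crystalline characters embedding by embedding, invoke Lemma~\ref{lem:existence of crystalline chars} to produce them up to unramified twist, and then adjust by unramified characters to match $\chi_1,\chi_2$ exactly. Your allocation of weights at the non-distinguished embeddings over $\sigma\in J^c$ is in fact cleaner than the paper's written version (which contains a harmless bookkeeping slip there), but the argument is the same.
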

\begin{proof}We define sets $B=\{b_{\tau}\}_{\tau\in \Hom(K,\Qlbar)}$ and
  $C=\{c_{\tau}\}_{\tau\in \Hom(K,\Qlbar)}$ of integers as follows. For each
  $\sigma\in \Hom(k,\Flbar)$, let $S_\sigma$ be the subset of $\Hom(K,\Qlbar)$ consisting of
  those $\tau$ with $\taubar=\sigma$. By definition, for each $\sigma$
  there is a distinguished element $\tilde{\sigma}$ of $S_\sigma$ with
  $\lambda_{\tilde{\sigma},i}=a_{\sigma,i}$, and for each element
  $\tau\ne\tilde{\sigma}$ of $S_\sigma$ we have
  $\lambda_{\tau,i}=0$. Choose a subset $K_\sigma$ of
  $S_\sigma\backslash\{\tilde{\sigma}\}$ of size $\delta_\sigma$. 

  Suppose $\sigma \in J$. We let $b_{\tilde{\sigma}}=a_{\sigma,1}+1$, we let
  $b_{\tau}=1$ if $\tau\in K_\sigma$, and $b_\tau=0$
  for all other $\tau \in S_\sigma$. Similarly, we let $c_{\tilde{\sigma}}=a_{\sigma,2}$, we let
  $c_{\tau}=1$ if $\tau \in S_\sigma\backslash K_\sigma\cup
  \{\tilde{\sigma}\}$ and $c_\tau=0$ for $\tau \in K_\sigma$.

  Suppose $\sigma \notin J$. We let $c_{\tilde{\sigma}}=a_{\sigma,1}+1$, we let
  $c_{\tau}=1$ if $\tau\in K_\sigma$, and $c_\tau=0$
  for all other elements of $S_\sigma$. Similarly, we let $b_{\tilde{\sigma}}=a_{\sigma,2}$, we let
  $b_{\tau}=1$ if $\tau \in S_\sigma\backslash K_\sigma\cup
  \{\tilde{\sigma}\}$ and $b_\tau=0$ for $\tau \in K_\sigma$.
  
Then by Lemma \ref{lem:existence of crystalline chars}, $\rhobar$ has
a lift given by the direct sum of unramified twist of $\epsilon_B$ and
an unramified twist of $\epsilon_C$. By definition, this is a
diagonal crystalline lift of Hodge type $\lambda$.
\end{proof}
\begin{cor}\label{cor: if e is big then all weights are explicit,
    niveau 1}
  Suppose that $e\ge l$, and $\rhobar:G_K\to\GL_2(\Flbar)$ is a
  continuous representation which is a direct sum of two characters. Suppose that
  $a\in(\Z^2_+)^{\Hom(k,\Flbar)}$ is a Serre weight such that
  \[\det\rhobar|_{I_K}=\prod_{\sigma\in
    \Hom(k,\Flbar)}\omega_\sigma^{a_{\sigma,1}+a_{\sigma,2}+e}.\]Then for any $\lambda\in(\Z^2_+)^{\Hom(K,\Qlbar)}$ lifting $a$,
$\rhobar$ has a diagonal crystalline lift of Hodge type
$\lambda$. 
\end{cor}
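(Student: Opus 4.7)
The plan is to reduce to Lemma~\ref{lem:explicit semisimple crystalline lifts in niveau 1} by exhibiting data $(J,\{\delta_\sigma\})$ compatible with $\rhobar|_{I_K}$. Write $\rhobar|_{I_K}\cong\chi_1\oplus\chi_2$. It suffices to find $J\subseteq\Hom(k,\Flbar)$ and integers $0\le\delta_\sigma\le e-1$ so that one of the diagonal characters, say $\chi_1$, satisfies
\[
\chi_1|_{I_K}=\prod_{\sigma\in J}\omega_\sigma^{a_{\sigma,1}+1+\delta_\sigma}\prod_{\sigma\notin J}\omega_\sigma^{a_{\sigma,2}+\delta_\sigma}.
\]
The determinant hypothesis then forces $\chi_2|_{I_K}$ into the matching pattern of the lemma automatically, since the two exponents for each $\sigma$ must sum to $a_{\sigma,1}+a_{\sigma,2}+e$.

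Because $\Flbar^\times$ has order coprime to $l$, each $\chi_i$ kills wild inertia and factors through the local class field theory map $\CO_K^\times\twoheadrightarrow k^\times$. Fix an embedding $\sigma_0\in\Hom(k,\Flbar)$ and let $\sigma_0,\sigma_1,\ldots,\sigma_{f-1}$ be its orbit under $l$-power Frobenius, where $f=[k:\F_l]$. All $\omega_{\sigma_i}$ are powers of $\omega_{\sigma_0}$, so I may write $\chi_1|_{I_K}=\omega_{\sigma_0}^N$ for some $N\in\Z/(l^f-1)\Z$, and an expression $\chi_1|_{I_K}=\prod_i\omega_{\sigma_i}^{n_i}$ amounts (after re-indexing if needed) to a congruence $\sum_i n_il^i\equiv N\pmod{l^f-1}$. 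The problem thus reduces to finding integers $n_i$ in the allowed set
\[
S_i:=[a_{\sigma_i,2},\,a_{\sigma_i,2}+e-1]\cup[a_{\sigma_i,1}+1,\,a_{\sigma_i,1}+e]
\]
satisfying this congruence; the data $(J,\{\delta_{\sigma_i}\})$ can then be read off from each $n_i$ according to which interval it falls into.

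The only genuine content of the argument is a short pigeonhole count that uses the hypothesis $e\ge l$. Since $a_{\sigma_i,1}-a_{\sigma_i,2}\le l-1\le e-1$ (combining the Serre weight condition with $e\ge l$), the two intervals comprising $S_i$ overlap or abut, so $S_i$ contains the block $[a_{\sigma_i,2},\,a_{\sigma_i,2}+l-1]$ of $l$ consecutive integers. As $(n_i)$ ranges over the product of these blocks for $i=0,\ldots,f-1$, base-$l$ expansion shows that $\sum_i n_il^i$ traverses the $l^f$ consecutive integers from $\sum_ia_{\sigma_i,2}l^i$ to $\sum_ia_{\sigma_i,2}l^i+l^f-1$, and any $l^f$ consecutive integers exhaust every residue class modulo $l^f-1$. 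Hence $N$ is realized, and Lemma~\ref{lem:explicit semisimple crystalline lifts in niveau 1} applied to the resulting $(J,\{\delta_\sigma\})$ produces the desired diagonal crystalline lift of Hodge type~$\lambda$. I expect the interval-length/pigeonhole verification to be the only step where anything non-formal happens; once that is in hand, the lemma does all the work.
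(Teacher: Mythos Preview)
Your proposal is correct and takes essentially the same approach as the paper: reduce to Lemma~\ref{lem:explicit semisimple crystalline lifts in niveau 1} by producing $(J,\{\delta_\sigma\})$, using the base-$l$ digit expansion of a tame character together with $e\ge l$. The paper's version is a bit more direct: it simply fixes $J=\Hom(k,\Flbar)$, writes $\psi_1|_{I_K}\prod_\sigma\omega_\sigma^{-(a_{\sigma,1}+1)}=\prod_\sigma\omega_\sigma^{c_\sigma}$ with $0\le c_\sigma\le l-1$, and takes $\delta_\sigma=c_\sigma$ (which is $\le l-1\le e-1$); your pigeonhole argument effectively re-proves the existence of this digit expansion, and the discussion of the two intervals overlapping is unnecessary since either interval alone already has length $e\ge l$.
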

\begin{proof}Suppose that $\rhobar\cong\psi_1\oplus\psi_2$. Since any
  representation as in the statement of Lemma \ref{lem:explicit
    semisimple crystalline lifts in niveau 1} has  $\det\rhobar|_{I_K}=\prod_{\sigma\in
    \Hom(k,\Flbar)}\omega_\sigma^{a_{\sigma,1}+a_{\sigma,2}+e}$, it suffices to
  show that we can choose $J$ and $\delta_\sigma$ as in the statement
  of Lemma \ref{lem:explicit semisimple crystalline lifts in niveau 1}
  such that \[\psi_1|_{I_K}=\prod_{\sigma\in
      J}\omega_\sigma^{a_{\sigma,1}+1+\delta_\sigma}\prod_{\sigma\in
      J^c}\omega_\sigma^{a_{\sigma,2}+\delta_\sigma}.\] Take $J=\Hom(k,\Flbar)$,
    and
    write $\psi_1|_{I_K}\prod_{\sigma\in
      J}\omega_\sigma^{-(a_{\sigma,1}+1)}$ in the form
    $\prod_{\sigma\in J}\omega_\sigma^{c_\sigma}$ with $0\le
    c_\sigma\le l-1$. Then we may take $\delta_\sigma=c_\sigma$.
\end{proof}
\begin{remark} Contrary to the claim made in the introduction to
  \cite{MR2430440}, once $e=l-1$, it is no longer the case that for every $\rhobar$
with determinant $\prod_{\sigma\in
    \Hom(k,\Flbar)}\omega_\sigma^{a_{\sigma,1}+a_{\sigma,2}+e}$ can 
we can apply Lemma \ref{lem:explicit semisimple crystalline lifts in niveau 1}
to find a crystalline diagonal lift.
For a counterexample,
take $l=7$, $[k:\F_l]=2$, and label the two embeddings $k\into \Fbar_l$ as $\sigma_1$
and $\sigma_2$. Then take 
\begin{align*}
a_{\sigma_1,1} &= l-1,  & a_{\sigma_2,1} &= 1, \\
a_{\sigma_1,2}&=a_{\sigma_2,2} = 0, & \rhobar &= \psi_1 \oplus \psi_2,\\
\text{where }\hspace{1cm} \psi_1 &= \omega_{\sigma_1}^{l-1} \omega_{\sigma_2}^{4}
& \psi_2 &= \omega_{\sigma_1}^{l-1} \omega_{\sigma_2}^{l-4}.
\end{align*}
Then it is easy to see (by considering all
4 possible sets $J$) that we can never choose $\delta_\sigma$ to make $\rhobar$
equivalent to the representation in Lemma \ref{lem:explicit semisimple crystalline lifts in niveau 1}.
\end{remark}

We now consider the case of irreducible representations
$\rhobar:G_K\to\GL_2(\Flbar)$. Recall that $K'$ denotes the unique unramified
quadratic extension of $K$ and $k'$ denotes its residue field. Then $\rhobar$ is induced from a character
of $G_{K'}$, and $\rhobar|_{I_K}$ decomposes as a sum of characters.
\begin{lem}\label{lem:explicit semisimple crystalline lifts in niveau 2}
  Suppose that $a\in(\Z^2_+)_0^{\Hom(k,\Flbar)}$ is a Serre weight, and that $\rhobar:G_K\to\GL_2(\Flbar)$ is
  a continuous irreducible representation. Suppose that there is a
  decomposition  $\Hom(k',\Flbar)=J\coprod J^c$ such that $J$ contains
  exactly one embedding extending each element of $\Hom(k,\Flbar)$,
  and for each $\sigma\in \Hom(k,\Flbar)$ there is an integer $0\le \delta_\sigma\le e-1$ with \[\rhobar|_{I_K}\cong
  \begin{pmatrix}\prod_{\sigma\in
      J}\omega_{\sigma}^{a_{\sigma,1}+1+\delta_{\sigma}}\prod_{\sigma\in
      J^c}\omega_\sigma^{a_{\sigma,2}+e-1-\delta_{\sigma}}&0\\ 0& \prod_{\sigma\in
      J^c}\omega_\sigma^{a_{\sigma,1}+1+\delta_{\sigma}}\prod_{\sigma\in
      J}\omega_\sigma^{a_{\sigma,2}+e-1-\delta_{\sigma}}
  \end{pmatrix}.\]

Then for any  $\lambda\in(\Z^2_+)_0^{\Hom(K,\Qlbar)}$ lifting $a$,
$\rhobar$ has a potentially diagonalizable crystalline lift of Hodge type
$\lambda$ which becomes diagonal when restricted to $G_{K'}$.
\end{lem}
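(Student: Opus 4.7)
\emph{Proof proposal.} Since $\rhobar$ is irreducible and $\rhobar|_{I_K}$ is a direct sum of characters, $\rhobar$ is induced from a character $\bar\psi_0\colon G_{K'}\to\Flbar^\times$. After possibly swapping $\bar\psi_0$ with its $\Gal(K'/K)$-conjugate (which leaves $\Ind\bar\psi_0$ unchanged), we may assume $\bar\psi_0|_{I_{K'}}$ equals the top-left entry of the given diagonal description of $\rhobar|_{I_K}$; because conjugation by $c$ interchanges $J$ and $J^c$, $\bar\psi_0^c|_{I_{K'}}$ then automatically equals the bottom-right entry. The plan is to use Lemma \ref{lem:existence of crystalline chars} to construct a crystalline character $\epsilon_A$ of $G_{K'}$ with $\bar\epsilon_A|_{I_{K'}}=\bar\psi_0|_{I_{K'}}$ and with Hodge--Tate weights tailored so that $\Ind\epsilon_A$ has Hodge type $\lambda$, then correct by an unramified twist and induce up.

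Concretely, $\epsilon_A$ is determined by a tuple $A=\{a_\tau\}_{\tau\in\Hom(K',\Qlbar)}$. For each $\sigma_0\in\Hom(k,\Flbar)$, let $\tilde\sigma_0\in\Hom(K,\Qlbar)$ be the distinguished embedding at which $\lambda$ is supported (so $\lambda_{\tilde\sigma_0}=a_{\sigma_0}$ and $\lambda_\tau=(0,0)$ for the other $e-1$ embeddings $\tau$ lifting $\sigma_0$), and for each such $\tau$ denote its two extensions to $K'$ by $\tau_J$ (lifting the $J$-element over $\sigma_0$) and $\tau_{J^c}$. I will set $(a_{\tau_J},a_{\tau_{J^c}})=(a_{\sigma_0,1}+1,\,a_{\sigma_0,2})$ when $\tau=\tilde\sigma_0$, assign the pair $(1,0)$ to $\delta_{\sigma_0}$ of the remaining $e-1$ embeddings lifting $\sigma_0$, and assign $(0,1)$ to the other $e-1-\delta_{\sigma_0}$; the hypothesis $0\le\delta_{\sigma_0}\le e-1$ makes this partition possible. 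A short bookkeeping check then gives $\sum_{\bar\tau=\sigma}a_\tau=a_{\sigma_0,1}+1+\delta_{\sigma_0}$ for the $J$-element $\sigma$ over $\sigma_0$ and $\sum_{\bar\tau=\sigma c}a_\tau=a_{\sigma_0,2}+e-1-\delta_{\sigma_0}$, so Lemma \ref{lem:existence of crystalline chars} yields a crystalline $\epsilon_A$ with $\bar\epsilon_A|_{I_{K'}}=\bar\psi_0|_{I_{K'}}$. By the very construction, $\{\HT_{\tau_J}(\epsilon_A),\HT_{\tau_{J^c}}(\epsilon_A)\}=\{\lambda_{\tau,1}+1,\lambda_{\tau,2}\}$ for every $\tau\in\Hom(K,\Qlbar)$, which is exactly the prescribed Hodge type of $\Ind\epsilon_A$ at $\tau$.

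To finish, $\bar\epsilon_A$ and $\bar\psi_0$ differ by an unramified character $\bar\mu\colon G_{K'}\to\Flbar^\times$; choose any unramified lift $\mu\colon G_{K'}\to\cO_{\Qlbar}^\times$ of $\bar\mu$ and set $\psi:=\epsilon_A\mu^{-1}$, which remains crystalline with the same Hodge--Tate weights since unramified twists are Hodge--Tate of weight zero, and satisfies $\bar\psi=\bar\psi_0$. Then $r:=\Ind_{G_{K'}}^{G_K}\psi$ is a crystalline lift of $\rhobar$ of Hodge type $\lambda$, and Mackey gives $r|_{G_{K'}}=\psi\oplus\psi^c$, a sum of characters and hence diagonal. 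Therefore $r$ is potentially diagonalizable and becomes diagonal upon restriction to $G_{K'}$, as required. No substantial obstacle arises: the argument is essentially combinatorial bookkeeping of Hodge--Tate weights feeding into Lemma \ref{lem:existence of crystalline chars}, the one nontrivial input being the observation that the distinguished-lift structure of $\lambda$ leaves enough $(0,1)/(1,0)$ slack to absorb any admissible value of $\delta_{\sigma_0}$.
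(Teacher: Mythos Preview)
Your proof is correct and follows essentially the same approach as the paper: write $\rhobar\cong\Ind_{G_{K'}}^{G_K}\bar\psi_0$, use Lemma~\ref{lem:existence of crystalline chars} to build a crystalline character of $G_{K'}$ with the right residual inertial character by distributing the Hodge--Tate weights $(a_{\sigma,1}+1,a_{\sigma,2})$ at the distinguished lift and $(1,0)$ or $(0,1)$ at the remaining $e-1$ lifts according to $\delta_\sigma$, then twist by an unramified character and induce. The paper's argument is identical up to notation (their $B$, $\tau_1$, $\tau_2$, $K_\sigma$ play the roles of your $A$, $\tau_J$, $\tau_{J^c}$, and chosen $\delta_{\sigma_0}$-subset).
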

\begin{proof}We may write \[\rhobar\cong\Ind_{G_{K'}}^{G_K}\psi\]for
  some character $\psi:G_{K'}\to\Flbar^\times$ which satisfies \[\psi|_{I_{K'}}=\prod_{\sigma\in
      J}\omega_{\sigma}^{a_{\sigma,1}+1+\delta_{\sigma}}\prod_{\sigma\in
      J^c}\omega_\sigma^{a_{\sigma,2}+e-1-\delta_{\sigma}}.\] We
    define a set $B=\{b_\tau\}_{\tau\in \Hom(K',\Qlbar)}$ as follows. 
For each $\tau\in \Hom(K,\Qlbar)$, we denote the two extensions of $\tau$ to
elements of $\Hom(K',\Qlbar)$ by $\tau_1$ and $\tau_2$, where
$\taubar_1\in J$ and $\taubar_2\in J^c$. 

 For each
  $\sigma\in \Hom(k,\Flbar)$, let $S_\sigma$ be the
  subset of $\Hom(K,\Qlbar)$ consisting of
  those $\tau$ with $\taubar=\sigma$. By definition, for each
  $\sigma\in \Hom(k,\Flbar)$
  there is a distinguished element $\tilde{\sigma}$ of $S_\sigma$ with
  $\lambda_{\tilde{\sigma},i}=a_{\sigma,i}$, and for each element
  $\tau\ne\tilde{\sigma}$ of $S_\sigma$ we have
  $\lambda_{\tau,i}=0$. Choose a subset $K_\sigma$ of
  $S_\sigma\backslash\{\tilde{\sigma}\}$ of size $\delta_\sigma$. 

Then we let $b_{\tilde{\sigma}_1}=a_{\sigma,1}+1$, and
$b_{\tilde{\sigma}_2}=a_{\sigma,2}$. If $\tau\in K_\sigma$, we let
  $b_{\tau_1}=1$ and $b_{\tau_2}=0$. If $\tau\in
  S_\sigma\backslash\{\tilde\sigma\}\cup K_\sigma$, we let
  $b_{\tau_1}=0$ and $b_{\tau_2}=1$.

Then by Lemma \ref{lem:existence of crystalline chars} there is a
crystalline character $\tilde{\psi}$ of $G_{K'}$ lifting $\psi$, which
is an unramified twist of the character $\epsilon_B$. The
representation $\Ind_{G_{K'}}^{G_K}\tilde{\psi}$ gives the required lift.   
\end{proof}

\begin{cor}\label{cor: if e is big then all weights are explicit,
    niveau 2}
  Suppose that $e\ge l$, and $\rhobar:G_K\to\GL_2(\Flbar)$ is a
  continuous irreducible representation. Suppose that
  $a\in(\Z^2)_+^{\Hom(k,\Flbar)}$ is a Serre weight such that 
\[\det\rhobar|_{I_K}=\prod_{\sigma\in
    \Hom(k,\Flbar)}\omega_\sigma^{a_{\sigma,1}+a_{\sigma,2}+e}.\]Then for any weight $\lambda\in(\Z^2)_+^{\Hom(K,\Qlbar)}$ lifting $a$,
$\rhobar$ has a potentially diagonalizable crystalline lift of Hodge type
$\lambda$, which becomes diagonal upon restriction to
$G_{K'}$.
\end{cor}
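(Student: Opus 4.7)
The plan is to derive the corollary from Lemma~\ref{lem:explicit semisimple crystalline lifts in niveau 2}, in direct analogy with the niveau-one argument of Corollary~\ref{cor: if e is big then all weights are explicit, niveau 1}. Since $\rhobar$ is irreducible, write $\rhobar \cong \Ind_{G_{K'}}^{G_K} \psi$ for a character $\psi : G_{K'} \to \Flbar^\times$; then $\rhobar|_{I_K} \cong \psi|_{I_{K'}} \oplus \psi^c|_{I_{K'}}$, where $c$ denotes the nontrivial element of $\Gal(k'/k)$. It therefore suffices to exhibit a subset $J \subset \Hom(k',\Flbar)$ containing exactly one extension of each $\sigma \in \Hom(k,\Flbar)$, together with integers $0 \le \delta_\sigma \le e-1$, such that $\psi|_{I_{K'}}$ equals the character $\prod_{\sigma \in J}\omega_\sigma^{a_{\sigma,1}+1+\delta_\sigma}\prod_{\sigma \in J^c}\omega_\sigma^{a_{\sigma,2}+e-1-\delta_\sigma}$ appearing in that Lemma.

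Starting from an arbitrary choice of $J$ and writing $\sigma^+$, $\sigma^-$ for the extensions of each $\sigma \in \Hom(k,\Flbar)$ lying in $J$, $J^c$ respectively, define
\[
\chi := \psi|_{I_{K'}} \cdot \prod_{\sigma \in \Hom(k,\Flbar)} \omega_{\sigma^+}^{-(a_{\sigma,1}+1)}\,\omega_{\sigma^-}^{-(a_{\sigma,2}+e-1)}.
\]
Using $\omega_{\sigma^+}\omega_{\sigma^-} = \omega_\sigma$ on $I_K$ together with the determinant hypothesis $\psi|_{I_{K'}}\cdot\psi^c|_{I_{K'}} = \det\rhobar|_{I_K} = \prod_\sigma \omega_\sigma^{a_{\sigma,1}+a_{\sigma,2}+e}$, a direct computation yields $\chi\cdot\chi^c = 1$. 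Since $c$ acts on niveau-two fundamental characters of $I_K$ by raising to the $l^f$-th power, this is equivalent to $\chi^{l^f+1} = 1$, so $\chi$ lies in the unique subgroup $T$ of order $l^f+1$ in the cyclic character group of $I_K$ of order dividing $l^{2f}-1$.

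The task is now to show that, after possibly modifying $J$, we can write $\chi = \prod_\sigma (\omega_{\sigma^+}/\omega_{\sigma^-})^{\delta_\sigma} = \prod_\sigma \omega_{\sigma^+}^{(1-l^f)\delta_\sigma}$ with $0 \le \delta_\sigma \le e-1$. Each factor $\omega_{\sigma^+}^{1-l^f}$ has order exactly $l^f+1$ and hence generates $T$; moreover, swapping $\sigma^+$ with $\sigma^-$ negates the corresponding exponent, since $\omega_{\sigma^-}^{1-l^f} = \omega_{\sigma^+}^{l^f(1-l^f)} \equiv \omega_{\sigma^+}^{-(1-l^f)}$ modulo $l^{2f}-1$. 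Fixing a reference $\tau_0 \in \Hom(k',\Flbar)$ and indexing $\Hom(k',\Flbar)$ via its Frobenius orbit as $\tau_0, \tau_0\circ\Frob, \ldots, \tau_0\circ\Frob^{2f-1}$, the combined freedom of choosing $J$ and $\delta_\sigma \in [0,e-1]$ realizes every element of $T$ of the form $\omega_{\tau_0}^{(1-l^f)\sum_{i=0}^{f-1} l^i d_i}$ with $d_i \in \{-(e-1),\ldots,e-1\}$.

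I expect the main step to be the combinatorial claim that the hypothesis $e \ge l$ suffices for this construction to exhaust $T$; equivalently, that every residue in $\Z/(l^f+1)$ is representable as $\sum_{i=0}^{f-1} l^i d_i$ with $d_i \in \{-(l-1),\ldots,l-1\}$. This will follow from the standard base-$l$ expansion of integers in $[0, l^f-1]$ to cover all positive residues, together with the identity $-1 \equiv l^f \pmod{l^f+1}$ to pick up the single residue missed by the non-negative range. Once $J$ and $\{\delta_\sigma\}$ are produced this way, an application of Lemma~\ref{lem:explicit semisimple crystalline lifts in niveau 2} then supplies the required potentially diagonalizable crystalline lift of Hodge type $\lambda$, which becomes diagonal upon restriction to $G_{K'}$.
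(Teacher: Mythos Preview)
Your overall strategy---reduce to Lemma~\ref{lem:explicit semisimple crystalline lifts in niveau 2}, define the normalised character $\chi$, use the determinant hypothesis to get $\chi\chi^c=1$ and hence $\chi\in T$, then exhaust $T$ combinatorially---is exactly the paper's approach. The gap is in the step where you ``modify $J$'' to gain signed exponents. When you swap $\sigma^+$ and $\sigma^-$, the right-hand side $\prod_\sigma(\omega_{\sigma^+}/\omega_{\sigma^-})^{\delta_\sigma}$ changes as you describe, but so does $\chi$ itself: the swap multiplies $\chi$ by $(\omega_{\sigma^+}/\omega_{\sigma^-})^{a_{\sigma,1}-a_{\sigma,2}+2-e}$. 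So matching the modified $\chi$ to the modified right-hand side does \emph{not} correspond to the exponent $d_\sigma$ simply ranging over $\{-(e-1),\dots,e-1\}$; the effective range at a swapped place is $[a_{\sigma,1}-a_{\sigma,2}-2e+3,\,a_{\sigma,1}-a_{\sigma,2}-e+2]$, which depends on $a$. Your combinatorial claim therefore does not settle the question as stated.

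What the paper does instead is keep $J$ fixed. With $\delta_\sigma\in[0,e-1]\supset[0,l-1]$ and $J$ fixed, the expressions $\prod_\sigma(\omega_{\sigma^+}/\omega_{\sigma^-})^{\delta_\sigma}$ already hit every element of $T\cong\Z/(l^f+1)$ except possibly the class of $-1$ (this is essentially your base-$l$ observation). The paper isolates this single exceptional case---phrased there as the case where a certain auxiliary character equals $1$, so that its $[0,l-1]$-expansion is not unique---and handles it not by changing $J$ but by twisting the auxiliary character by $(\omega_{\sigma^+_0}/\omega_{\sigma^-_0})^{-(a_{0,1}+1-a_{0,2})}$ at a single place and then reading off the $\delta_\sigma$'s from the new expansion (with $\delta_0$ given by a complementary formula). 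If you want to repair your argument along your own lines, the cleanest fix is likewise to keep $J$ fixed throughout and treat the one residual class separately, rather than invoking a $J$-swap whose effect on $\chi$ you have not tracked.
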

\begin{proof}
We can write $\rhobar\cong\Ind_{G_{K'}}^{G_{K}}\phi$ for some character
$\phi:G_{K'}\to\Fbar_l^\times$. The condition on the determinant of 
$\rhobar$ tells us that 
\[(\phi\phi^c)|_{I_{K'}}=\prod_{\sigma\in
    \Hom(k',\Flbar)}\omega_\sigma^{a_{\sigma,1}+a_{\sigma,2}+e},\]
where $c$ denotes the nontrivial element of $\Gal(K'/K)$ and $\phi^c$
denotes $\phi$ conjugated by $c$.

On the other hand, by Lemma \ref{lem:explicit semisimple crystalline lifts in niveau 2}
and the first line of its proof, we know that if we choose $J$ and 
$(\delta_\sigma)_{\sigma\in\Hom(k,\Fbar_l)}$ as in the statement of that lemma 
and write
\[\psi=\prod_{\sigma\in
      J}\omega_{\sigma}^{a_{\sigma,1}+1+\delta_{\sigma}}\prod_{\sigma\in
      J^c}\omega_\sigma^{a_{\sigma,2}+e-1-\delta_{\sigma}},\]
then for any 
$\lambda\in(\Z^2_+)_0^{\Hom(K,\Qlbar)}$ lifting $a$ and
any representation $G_K\to\GL_2(\Flbar)$ agreeing with
$\Ind_{I_{K'}}^{I_{K}}\psi$ on $I_K$, that representation
has a potentially diagonalizable crystalline lift of Hodge type
$\lambda$ which becomes diagonal when restricted to $G_{K'}$.
Thus to prove the present corollary it suffices to show that, for an 
appropriate choice of $J$ and $(\delta_\sigma)_{\sigma\in\Hom(k,\Fbar_l)}$,
we can arrange for $\psi$ to equal $\phi|_{I_{K'}}$.

Let $f=[k:\F_l]$, and let $\{\sigma_1,\dots,\sigma_{2f}\}$ denote the embeddings 
$k'\into\Flbar$.
We will take the labels mod $2f$, and we can and do choose 
the labelling such that
\begin{itemize}
\item $\omega_{\sigma_i}=\omega_{\sigma_{i+1}}^l$, and\item $\omega_{\sigma_{i+f}|_k}=\omega_{\sigma_{i}|_k}$. (In fact, this second point will follow
from the first.)
\end{itemize}
We will write $\omega_i$ for $\omega_{\sigma_i}$ 
(thus the $i$ here is taken mod $2f$); and we
will write $\delta_i$ for $\delta_{\sigma_i|k}$ and $a_{i,j}$ for $a_{\sigma_i|k,j}$ 
(thus the $i$s here are taken mod $f$).
We will choose $J=\{\sigma_1,\dots,\sigma_f\}$ , and we see that this contains, as is required,
exactly one embedding extending each element of $\Hom(k,\Fbar_l)$.

We let
\[\phi' := \phi|_{I_{K'}} \prod_{i=1}^f \omega_i^{-a_{i,1}-1}
           \prod_{i=f+1}^{2f}\omega_i^{-a_{i,2}-e+l},
\]
and we write 
$\phi'=\prod_{\sigma\in\Hom(k',\Fbar_l)}
\omega_\sigma^{\eta_\sigma}$, 
where $0\leq \eta_\sigma\leq l-1$ for each $\eta_\sigma$. This expression is unique
except that the special case where all the $\eta$ are 0 is indistinguishable
from the case when they are all $l-1$. Let us assume for the moment that we are
not in this special case and thus the expression is genuinely unique.

We write $\eta_i$ for $\eta_{\sigma_i}$. We then calculate that
\begin{align*}
\phi' (\phi')^c  &= (\phi \phi^c)|_{I_{K'}} \prod_{i=1}^{2f} \omega_i^{-a_{i,1}-1}
           \prod_{i=1}^{2f}\omega_i^{-a_{i,2}-e+l}
           =(\phi \phi^c)|_{I_{K'}} \prod_{i=1}^{2f}\omega_i^{-a_{i,1}-a_{i,2}-e+l-1}\\
           &=\prod_{i=1}^{2f} \omega_i^{a_{i,1}+a_{i,2}+e} \prod_{i=1}^{2f}\omega_i^{-a_{i,1}-a_{i,2}-e+l-1}
           =\prod_{i=1}^{2f}\omega_i^{a_{i,1}+a_{i,2}+e-a_{i,1}-a_{i,2}-e+l-1}\\
           &=\prod_{i=1}^{2f}\omega_i^{l-1}=1,
\end{align*}
so that
\begin{align*}
\phi' &= ((\phi')^c)^{-1}
=\left(\prod_{i=1}^{2f} \omega_i^{\eta_{i+f}}\right)^{-1} 
= \prod_{i=1}^{2f} \omega_i^{-\eta_{i+f}}= \prod_{i=1}^{2f} \omega_i^{l-1-\eta_{i+f}}.
\end{align*}
It follows from the uniqueness discussed above that $\eta_{i+f}=l-1-\eta_i$. 
For $i=1,\dots,f$, we let $\delta_i = \eta_i$. Then
we see that with this choice of $J$ and
$(\delta_\sigma)_{\sigma\in\Hom(k,\Fbar_l)}$, 
\begin{align*}
\psi&=\prod_{i=1}^f \omega_i^{a_{i,1}+1+\delta_{i}}
           \prod_{i=f+1}^{2f}\omega_i^{a_{i,2}+e-1-\delta_{i}}
      =\prod_{i=1}^f \omega_i^{a_{i,1}+1+\eta_i}
           \prod_{i=f+1}^{2f}\omega_i^{a_{i,2}+e-1-(l-1-\eta_i)}
           \\
      &=\prod_{i=1}^f \omega_i^{a_{i,1}+1} 
           \prod_{i=f+1}^{2f}\omega_i^{a_{i,2}+e-l}
           \prod_{i=1}^{2f}\omega_i^{\eta_i}
      =(\phi/\phi') \prod_{i=1}^{2f}\omega_i^{\eta_i}
\end{align*}
So $\psi = (\phi|_{I_{K'}}/\phi') \phi' = \phi|_{I_{K'}}$, as we
required.
Thus we are done apart from considering the special case we deferred
earlier, where $\phi'=1$.
Assume we are in this case, and put
\[\phi'' := \phi' \omega_0^{-a_{0,1}-1+a_{0,2}}\omega_f^{a_{0,1}+1-a_{0,2}}.
\]
We claim that $\phi''$ does not equal 1. To see this, since $\phi'=1$, we 
must show that $\phi''/\phi'\neq 1$.
We recall that 
$1\leq a_{0,1}+1-a_{0,2} \leq l$; since \[\phi''/\phi'=
\omega_0^{-a_{0,1}-1+a_{0,2}}\omega_f^{a_{0,1}+1-a_{0,2}}=\omega_0^{(l^f-1)(
  a_{0,1}+1-a_{0,2})}\]and $\omega_0$ has order $l^{2f}-1$, the claim follows.
Write $\phi''=\prod_{\sigma\in\Hom(k',\Fbar_l)}
\omega_\sigma^{\eta''_\sigma}$, where $0\leq \eta''_\sigma\leq l-1$
for each $\eta''_\sigma$. This expression is unique, since
$\phi''\neq1$.  Now, we calculate that \[(\phi'')(\phi'')^c = \phi'
(\phi')^c \omega_f^{a_{f,1}+1-a_{f,2}} \omega_0^{-a_{f,1}-1+a_{f,2}}
\omega_0^{a_{f,1}+1-a_{f,2}} \omega_f^{-a_{f,1}-1+a_{f,2}} = 1.\] We
conclude that $\eta''_i=l-1-\eta''_{i+f}$ in the same way as we saw
the corresponding fact for $\eta$ above.

We now take $J=\{0,\dots,f-1\}$, $\delta_i=\eta''_i$ for
$i=1,\dots,f-1$ and $\delta_0=e-1-\eta''_f$. Then
\begin{align*}
\psi&=\prod_{i=0}^{f-1} \omega_i^{a_{i,1}+1+\delta_{i}}
           \prod_{i=f}^{2f-1}\omega_i^{a_{i,2}+e-1-\delta_{i}} \\
       &=\omega_0^{a_{0,1}+1+e-1-\eta''_f} 
            \left(\prod_{i=1}^{f-1} \omega_i^{a_{i,1}+1+\eta''_{i}}\right)
            \omega_f^{a_{f,2}+e-1-(e-1-\eta''_f)} 
            \left(\prod_{i=f+1}^{2f-1}\omega_i^{a_{i,2}+e-1-(l-1-\eta''_{i})} \right)
            \\
            \displaybreak[1]
       &=\omega_0^{a_{0,1}+1-a_{0,2}} \omega_{2f}^{a_{0,2}+e-1-(l-1-\eta''_{2f})}  
            \left(\prod_{i=1}^{f-1} \omega_i^{a_{i,1}+1+\eta''_{i}}\right)
            \\&\hspace{3cm}
            \omega_f^{a_{0,2}-1-a_{0,1}} \omega_f^{a_{0,1}+1+\eta''_f} 
            \left(\prod_{i=f+1}^{2f-1}\omega_i^{a_{i,2}+e-1-(l-1-\eta''_{i})} \right)
            \\
            \displaybreak[1]
       &=\omega_0^{a_{0,1}+1-a_{0,2}} \omega_f^{a_{0,2}-1-a_{0,1}}
            \prod_{i=1}^{f} \omega_i^{a_{i,1}+1+\eta''_{i}}
            \prod_{i=f+1}^{2f}\omega_i^{a_{i,2}+e-1-(l-1-\eta''_{i})}
            \\
            \displaybreak[1]
       &=\omega_0^{a_{0,1}+1-a_{0,2}} \omega_f^{a_{0,2}-1-a_{0,1}}
           \prod_{i=1}^f \omega_i^{a_{i,1}+1} 
           \prod_{i=f+1}^{2f}\omega_i^{a_{i,2}+e-l}
           \prod_{i=1}^{2f}\omega_i^{\eta''_i}
           \\
       &=(\phi'/\phi'') (\phi|_{I_{K'}}/\phi') \prod_{i=1}^{2f}\omega_i^{\eta''_i}.
\end{align*}
So $\psi = (\phi'/\phi'') (\phi|_{I_{K'}}/\phi')\phi'' = \phi|_{I_{K'}}$, as we required.
\end{proof}

\begin{remark} Again, if $e=l-1$, it is no longer the case that for every $\rhobar$
with determinant $\prod_{\sigma\in
    \Hom(k,\Flbar)}\omega_\sigma^{a_{\sigma,1}+a_{\sigma,2}+e}$ can 
we can apply Lemma \ref{lem:explicit semisimple crystalline lifts in niveau 2}
to find a crystalline diagonal lift.
For a counterexample,
take $l=7$, $[k:\F_l]=2$, and label the two embeddings $k\into \Fbar_l$ as $\sigma_1$
and $\sigma_2$. Then take 
\begin{align*}
a_{\sigma_1,1} &= l-1,  & a_{\sigma_2,1} &= 1, \\
a_{\sigma_1,2}&=a_{\sigma_2,2} = 0, & \rhobar &= \Ind_{G_K}^{G_{K'}}\psi
\end{align*}
where $\psi: G_{K'}\to\GL_2(\Fbar_l)$ has 
$$\psi|_{I_{K'}}=\omega_{\tilde{\sigma_2}}^{l^3(l-1)+l^2 4+l(l-1)+(l-4)}$$
for $\tilde{\sigma}_2:k'\to\Fbar_l$ an embedding extending $\sigma_2$.
\end{remark}

\begin{lem}
  \label{lem:GHS agrees with bdj agrees with Schein where both defined}If $K/\Ql$ is
  unramified and $\rhobar$ 
is semisimple, then
  $\WBDJ(\rhobar)=\WSch(\rhobar)$. Similarly, if $K/\Ql$ is
  unramified and $\rhobar$ is
  reducible, then $\WBDJ(\rhobar)=\WGHS(\rhobar)$, and if $K$ is
  arbitrary, $\rhobar$ is reducible and $\rhobar$ is
  semisimple, then $\WSch(\rhobar)=\WGHS(\rhobar)$.  
\end{lem}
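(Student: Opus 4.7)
The plan is to unwind the four definitions and check that they coincide in each case, relying on two inputs already established: Lemma~\ref{lem:existence of crystalline chars}, which determines the restriction to $I_K$ of a crystalline character from its Hodge--Tate weights, and Lemma~\ref{lem:explicit semisimple crystalline lifts in niveau 1}, which produces a diagonal (in particular upper triangular) crystalline lift of the correct Hodge type from a representation $\rhobar\cong \psi_1\oplus\psi_2$ whose restriction to $I_K$ has the Schein shape.

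For part~(3), where $K$ is arbitrary and $\rhobar$ is reducible and semisimple (so automatically $\rhobar|_{I_K}$ is semisimple), I would observe that the inertial shape displayed in Definition~\ref{defn: GHS niveau 1} is, character-by-character, identical to the one required in Definition~\ref{defn: Schein niveau 1}. Thus if $a\in\WGHS(\rhobar)$, the existence of the upper triangular crystalline lift together with Lemma~\ref{lem:existence of crystalline chars} applied to each diagonal entry forces $\rhobar|_{I_K}$ to take that shape with some $(J,\{\delta_\sigma\})$, so $a\in\WSch(\rhobar)$. Conversely, if $a\in\WSch(\rhobar)$, I apply Lemma~\ref{lem:explicit semisimple crystalline lifts in niveau 1} to $\rhobar\cong\psi_1\oplus\psi_2$ to produce a diagonal crystalline lift of Hodge type $\lambda$ lifting $a$; this diagonal lift is a fortiori upper triangular, so $a\in\WGHS(\rhobar)$.

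For part~(2), with $K/\Ql$ unramified and $\rhobar$ reducible, I would use that $e=1$ forces every $\delta_\sigma=0$ and that reduction modulo $l$ gives a bijection $\Hom(K,\Qlbar)\liso\Hom(k,\Flbar)$. Under this bijection, specifying the subset $J\subset\Hom(k,\Flbar)$ in Definition~\ref{defn: BDJ niveau 1} records exactly which of $\psi_1,\psi_2$ carries the Hodge--Tate weight $a_{\taubar,1}+1$ at each $\tau$, which is the same data as choosing $J\subset\Hom(k,\Flbar)$ and $\delta_\sigma=0$ in the GHS definition. Hence both sets are characterised by the existence of an upper triangular crystalline lift of the same prescribed Hodge type, and the equality is immediate.

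For part~(1), with $K/\Ql$ unramified and $\rhobar$ semisimple, I would treat the irreducible and reducible sub-cases separately. In both, setting $e=1$ and $\delta_\sigma=0$ collapses the relevant Schein formula into the BDJ formula, so the inertial conditions coincide. In the irreducible case this concludes matters, as both $\WBDJ$ and $\WSch$ are defined purely by the inertia shape. In the reducible case, the BDJ definition additionally requires an upper triangular crystalline lift: the inclusion $\WSch(\rhobar)\subset\WBDJ(\rhobar)$ then follows from Lemma~\ref{lem:explicit semisimple crystalline lifts in niveau 1} exactly as in part~(3), while $\WBDJ(\rhobar)\subset\WSch(\rhobar)$ follows from Lemma~\ref{lem:existence of crystalline chars} applied to the diagonal characters of the given upper triangular lift. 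The main obstacle, such as it is, is purely bookkeeping of exponents of the form $a_{\sigma,1}+1+\delta_\sigma$, $a_{\sigma,2}+e-1-\delta_\sigma$, etc., across the four definitions, and recording the simple observation that a diagonal lift produced by Lemma~\ref{lem:explicit semisimple crystalline lifts in niveau 1} serves as an upper triangular lift for both the BDJ and the GHS definitions.
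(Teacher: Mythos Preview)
Your proposal is correct and follows the same approach as the paper, which simply says the lemma follows immediately from Lemmas~\ref{lem:explicit semisimple crystalline lifts in niveau 1} and~\ref{lem:explicit semisimple crystalline lifts in niveau 2} together with the definitions. You have merely expanded this one-line justification into the explicit bookkeeping of exponents and the observation that $e=1$ forces $\delta_\sigma=0$, which is exactly what the paper's proof leaves implicit.
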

\begin{proof}This follows immediately from Lemmas \ref{lem:explicit
    semisimple crystalline lifts in niveau 1} and \ref{lem:explicit
    semisimple crystalline lifts in niveau 2}, together with the
  definitions of $\WBDJ(\rhobar)$, $\WSch(\rhobar)$ and $\WGHS(\rhobar)$.\end{proof}

This motivates
  the following definition.
\begin{defn}\label{defn: Wexplicit}
  Suppose that $K/\Ql$ is a finite extension, and that
  $\rhobar:G_K\to\GL_2(\Flbar)$ is a continuous representation. Then
  we define a set $\Wexplicit(\rhobar)$ of Serre weights as follows:
  \begin{itemize}

  \item If $\rhobar$ is irreducible, we set
    $\Wexplicit(\rhobar):=\WSch(\rhobar)$.
  \item If $\rhobar$ is reducible, we set
    $\Wexplicit(\rhobar):=\WGHS(\rhobar)$.
  \end{itemize}

\end{defn}
\begin{rem}
  It is an immediate consequence of Lemma \ref{lem:GHS agrees with bdj
    agrees with Schein where both defined} that if $\rhobar$ is
  semisimple then $\Wexplicit(\rhobar)=\WSch(\rhobar)$. 
\end{rem}
\begin{prop}
  \label{prop: GHS weights are pot diag weights}We have $\Wexplicit(\rhobar)\subset \Wdiag(\rhobar)$.
\end{prop}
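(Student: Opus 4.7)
The plan is to treat separately the two cases in the definition of $\Wexplicit(\rhobar)$. Suppose first that $\rhobar$ is irreducible, so $\Wexplicit(\rhobar)=\WSch(\rhobar)$. For $a\in\WSch(\rhobar)$, Definition \ref{defn: Schein  niveau 2} exhibits $\rhobar|_{I_K}$ in exactly the form required by the hypothesis of Lemma \ref{lem:explicit semisimple crystalline lifts in niveau 2}. Applying that lemma to any lift $\lambda$ of $a$ produces a potentially diagonalizable crystalline lift of Hodge type $\lambda$, so $a\in\Wdiag(\rhobar)$.

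Now suppose $\rhobar$ is reducible, so $\Wexplicit(\rhobar)=\WGHS(\rhobar)$. For $a\in\WGHS(\rhobar)$, Definition \ref{defn: GHS niveau 1} furnishes a lift $\lambda$ of $a$ and a crystalline lift $\rho:G_K\to\GL_2(\CO_{\Qlbar})$ of Hodge type $\lambda$ of the form $\begin{pmatrix}\psi_1 & * \\ 0 & \psi_2\end{pmatrix}$. To conclude I need only show $\rho$ is potentially diagonalizable. I choose a finite extension $K'/K$ such that $\rhobar|_{G_{K'}}$ is trivial; this is possible because $\rhobar$ has finite image. On the standard lattice $\CO_{\Qlbar}^2$, the upper triangular shape of $\rho$ gives a $G_K$-invariant filtration by $\CO_{\Qlbar}$-direct summands with sub $\psi_1$ and quotient $\psi_2$, both of which are automatically crystalline. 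Since $\bar\rho|_{G_{K'}}$ is now semisimple, property (\ref{semisimp}) of the $\sim$-relation from Section \ref{sec:A lifting theorem} applied to $\rho|_{G_{K'}}$ (equipped with the restricted filtration) gives $\rho|_{G_{K'}} \sim \psi_1|_{G_{K'}} \oplus \psi_2|_{G_{K'}}$. The right-hand side is diagonal, so $\rho|_{G_{K'}}$ is diagonalizable; hence $\rho$ is potentially diagonalizable and $a\in\Wdiag(\rhobar)$.

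No real obstacle is encountered. The irreducible case is a direct appeal to the constructive Lemma \ref{lem:explicit semisimple crystalline lifts in niveau 2}, and the reducible case rests on the observation that an upper-triangular crystalline representation becomes $\sim$-equivalent to the direct sum of its diagonal characters after a finite base change that trivialises the mod $l$ reduction.
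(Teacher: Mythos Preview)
Your proof is correct and follows the same approach as the paper. The paper's own proof is terser---it simply cites Lemma \ref{lem:explicit semisimple crystalline lifts in niveau 2} in the irreducible case and the definition of $\WGHS(\rhobar)$ together with property (\ref{semisimp}) in the reducible case---but your explicit base change to a $K'$ over which $\rhobar$ becomes trivial is exactly the step one needs to justify applying property (\ref{semisimp}) when $\rhobar$ is reducible but not semisimple.
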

\begin{proof}If $\rhobar$ is irreducible, this follows from Lemma
  \ref{lem:explicit semisimple crystalline lifts in niveau 2}. If
  $\rhobar$ is reducible, then this follows from the definition of
  $\WGHS(\rhobar)$, together with point (\ref{semisimp}) of the list of
  properties of $\sim$ in section \ref{sec:A lifting theorem}.
  \end{proof}

Having obtained a lower bound on $\Wdiag(\rhobar)$, we now consider
whether there are any obvious upper bounds. Here our results are
rather less complete. Firstly, we have the following conjecture.
\begin{conj}
  (\cite{GHS}) $\Wcris(\rhobar)=\Wexplicit(\rhobar)$.
\end{conj}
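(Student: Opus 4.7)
The conjecture asserts that the only upper bound for $\Wcris(\rhobar)$ we need is the explicitly-defined set $\Wexplicit(\rhobar)$; since we already have $\Wexplicit(\rhobar)\subset\Wdiag(\rhobar)\subset\Wcris(\rhobar)$, the content is the reverse inclusion $\Wcris(\rhobar)\subset\Wexplicit(\rhobar)$. I would attack this separately in the reducible and irreducible cases, both times by extracting from the existence of a crystalline lift of a given Hodge type a much more rigid piece of structural information about $\rhobar|_{I_K}$.

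In the reducible case the goal is: if $\rho:G_K\to\GL_2(\Qlbar)$ is crystalline of Hodge type $\lambda$ with $\lambda$ lifting $a$ and $\rhobar$ reducible, produce a (possibly different) crystalline lift which is upper triangular and whose diagonal characters have the Hodge-Tate weights prescribed in Definition~\ref{defn: GHS niveau 1}. The natural route is to analyze the integral model of $\rho$ via Kisin modules or Breuil modules; reducibility of $\rhobar$ translates into the existence of a rank-one sub-object of $\rho\otimes\cO_{\Qlbar}/l$, which one then lifts to characteristic zero using the smoothness properties of the crystalline deformation ring. The resulting upper-triangular lift will have its diagonal characters forced into the form $\prod_\sigma\omega_\sigma^{a_{\sigma,1}+1+\delta_\sigma}$ and $\prod_\sigma\omega_\sigma^{a_{\sigma,2}+e-\delta_\sigma-1}$ simply because the possible Hodge-Tate weights of a crystalline character are rigid (Lemma~\ref{lem:existence of crystalline chars}), and the partitioning of Hodge-Tate weights between the two lines is controlled by the choice of $J$ and the $\delta_\sigma$.

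In the irreducible case, $\rhobar\cong\Ind_{G_{K'}}^{G_K}\psibar$ for $K'$ the unramified quadratic extension of $K$, and one must show the exponents of $\psibar|_{I_{K'}}$ have the form dictated by Definition~\ref{defn: Schein  niveau 2}. By restricting $\rho$ to $G_{K'}$ one obtains a crystalline representation there whose mod $l$ reduction is $\psibar\oplus\psibar^c$; after an induction/restriction analysis this reduces to the reducible problem over $K'$, subject to the additional Galois-equivariance that swaps the two summands. The key combinatorial fact to verify is that every way of distributing the Hodge-Tate weights of $\rho$ consistent with the induced structure produces characters of the prescribed inertial form, with the parameter $\delta_\sigma$ recording precisely the number of ``non-distinguished'' embeddings sent to the dominant line.

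The principal obstacle in both cases is the fine integral $p$-adic Hodge theory: while the characteristic-zero theory rigidly pins down Hodge-Tate weights, extracting inertial weights on the mod $l$ reduction requires descending to Breuil--Kisin modules with $e$-torsion and tracking how filtrations interact with the Frobenius over a ramified base. This is exactly the calculation the introduction describes as ``lengthy and unenlightening,'' and one expects it to be accessible in principle but tedious in the ramified case with $e<l$. Once $e\ge l$ the analysis collapses, as Corollaries~\ref{cor: if e is big then all weights are explicit, niveau 1} and~\ref{cor: if e is big then all weights are explicit, niveau 2} together with the determinant identification force every Serre weight in $\Wcris(\rhobar)$ into the explicit list, so the conjecture should at minimum be provable in that regime with only the arguments already assembled.
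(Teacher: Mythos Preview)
The statement is a \emph{conjecture}, not a theorem; the paper does not prove it and does not claim to. Immediately after stating the conjecture the paper says: ``By Proposition~\ref{prop: GHS weights are pot diag weights} we have $\Wexplicit(\rhobar)\subset \Wcris(\rhobar)$, so to prove this conjecture it would be enough to show that $\Wcris(\rhobar)\subset\Wexplicit(\rhobar)$. This is presumably accessible to the techniques of integral $l$-adic Hodge theory, but in the absence of any further insight we suspect that an attempt to prove the result would result in extensive unpleasant computation.'' So there is nothing to compare your proposal to: the paper offers no proof, only partial results (Lemmas~\ref{lem: all weights are explicit if e is big enough} and~\ref{lem: regular weights are explicit if K is unramified}) in special regimes.

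What you have written is a strategy sketch, and to your credit you essentially acknowledge this in the final paragraph. But there is a genuine gap in the reducible-case outline that you should be aware of. You write that reducibility of $\rhobar$ gives a rank-one sub-object of $\rho\otimes\cO_{\Qlbar}/l$, ``which one then lifts to characteristic zero using the smoothness properties of the crystalline deformation ring.'' This step does not follow: smoothness of the crystalline deformation ring says nothing about lifting a sub-line of the reduction to a crystalline sub-line of $\rho$. Indeed the given crystalline lift $\rho$ may well be irreducible even though $\rhobar$ is reducible. The content of the conjecture is precisely that \emph{some other} crystalline lift of the same Hodge type is upper-triangular, and producing such a lift from the mere existence of $\rho$ is exactly the integral $p$-adic Hodge theory computation the paper declines to carry out. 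Your observation that the $e\ge l$ semisimple case is already settled is correct and is recorded in the paper as Lemma~\ref{lem: all weights are explicit if e is big enough}.
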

By Proposition \ref{prop: GHS weights are pot diag weights} we have
$\Wexplicit(\rhobar)\subset \Wcris(\rhobar)$, so to prove this conjecture
it would be enough to show that $\Wcris(\rhobar)\subset
\Wexplicit(\rhobar)$. This is presumably accessible to the
techniques of integral $l$-adic Hodge theory, but in the absence of
any further insight we suspect that an attempt to prove the result
would result in extensive unpleasant computation. In lieu of such
calculations, we recall what is known in the case that $K/\Ql$ is
unramified or highly ramified.

\begin{lem}
  \label{lem: crystalline of given weights implies det} Suppose that $\rhobar:G_K\to\GL_2(\Flbar)$ is a
  continuous irreducible representation, and that
  $a\in \Wcris(\rhobar)$ is a Serre weight. Then 
\[\det\rhobar|_{I_K}=\prod_{\sigma\in
    \Hom(k,\Flbar)}\omega_\sigma^{a_{\sigma,1}+a_{\sigma,2}+e}.\]
\end{lem}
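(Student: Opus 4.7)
My plan is to unwind the definitions and use the Hodge--Tate weight computation provided by Lemma \ref{lem:existence of crystalline chars} applied to $\det \rho$ for a suitable crystalline lift $\rho$ of $\rhobar$.

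By the definition of $\Wcris(\rhobar)$, the hypothesis $a \in \Wcris(\rhobar)$ furnishes a crystalline lift $\rho : G_K \to \GL_2(\Qlbar)$ of $\rhobar$ having Hodge type $\lambda$ for some lift $\lambda \in (\Z^2_+)^{\Hom(K,\Qlbar)}$ of $a$. Taking determinants, $\det \rho$ is a crystalline character of $G_K$ whose Hodge--Tate weight at each embedding $\tau : K \into \Qlbar$ is the sum of the two weights of $\rho$, namely
\[
\HT_\tau(\det \rho) = (\lambda_{\tau,1} + 1) + \lambda_{\tau,2}.
\]

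Next I would apply Lemma \ref{lem:existence of crystalline chars} with $a_\tau := \lambda_{\tau,1} + 1 + \lambda_{\tau,2}$ to conclude that
\[
\overline{\det \rho}\big|_{I_K} = \prod_{\sigma \in \Hom(k,\Flbar)} \omega_\sigma^{b_\sigma}, \qquad b_\sigma = \sum_{\tau : \bar\tau = \sigma} \bigl( \lambda_{\tau,1} + 1 + \lambda_{\tau,2} \bigr).
\]
The key calculation is now purely bookkeeping: since $\lambda$ is a lift of $a$, for each $\sigma$ there is a unique distinguished $\tilde\sigma \in \Hom(K,\Qlbar)$ above $\sigma$ with $\lambda_{\tilde\sigma, i} = a_{\sigma,i}$, while all other $\tau$ above $\sigma$ satisfy $\lambda_{\tau,i} = 0$. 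Because $[K : \Q_l] = ef$ and $[k : \F_l] = f$, there are exactly $e$ embeddings $\tau$ above each $\sigma$. Therefore
\[
b_\sigma = \bigl( a_{\sigma,1} + 1 + a_{\sigma,2} \bigr) + (e-1) \cdot 1 = a_{\sigma,1} + a_{\sigma,2} + e.
\]

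Finally, since $\rho$ reduces to $\rhobar$, we have $\det \rhobar = \overline{\det \rho}$, and restricting to inertia gives the claimed formula. There is no real obstacle here — the argument is essentially a definition-chase, and the irreducibility hypothesis on $\rhobar$ is not actually used; the same proof applies to any $\rhobar$ admitting a crystalline lift of Hodge type $\lambda$ for some lift $\lambda$ of $a$.
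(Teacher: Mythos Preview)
Your proof is correct and is precisely the argument the paper intends: the paper's proof reads ``This follows immediately from the definition of $\Wcris(\rhobar)$ and Lemma \ref{lem:existence of crystalline chars},'' and you have simply unwound that immediate deduction. Your observation that irreducibility plays no role is also accurate.
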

\begin{proof}
  This follows immediately from the definition of $\Wcris(\rhobar)$ and
  Lemma \ref{lem:existence of crystalline chars}.
\end{proof}
\begin{lem}\label{lem: all weights are explicit if e is big enough}
  Suppose that $K$ has absolute ramification index $e\ge l$, and that
  $\rhobar$ is semisimple. 
Then $\Wcris(\rhobar)\subset
  \Wexplicit(\rhobar)$, so that $\Wcris(\rhobar)=\Wdiag(\rhobar)=\Wexplicit(\rhobar)$,
  and all three sets consist of precisely the set of Serre weights $a$
  with \[\det\rhobar|_{I_K}=\prod_{\sigma\in
    \Hom(k,\Flbar)}\omega_\sigma^{a_{\sigma,1}+a_{\sigma,2}+e}.\]
\end{lem}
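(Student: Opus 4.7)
The plan is to close the chain of inclusions $\Wexplicit(\rhobar)\subset\Wdiag(\rhobar)\subset\Wcris(\rhobar)$ by showing that under the hypotheses $\Wcris(\rhobar)\subset\Wexplicit(\rhobar)$, and to check simultaneously that all three sets coincide with the set of Serre weights $a$ satisfying the determinant identity. So let $a\in\Wcris(\rhobar)$.

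First I would establish the determinant identity
\[\det\rhobar|_{I_K}=\prod_{\sigma\in\Hom(k,\Flbar)}\omega_\sigma^{a_{\sigma,1}+a_{\sigma,2}+e}\]
for any $a\in\Wcris(\rhobar)$ without an irreducibility hypothesis: picking a crystalline lift $\rho$ of Hodge type $\lambda$ for some lift $\lambda$ of $a$, the character $\det\rho$ is crystalline with $\HT_\tau(\det\rho)=\lambda_{\tau,1}+\lambda_{\tau,2}+1$; Lemma \ref{lem:existence of crystalline chars} applied to $\det\rho$, together with the fact that $\lambda$ is a lift of $a$ (so exactly one $\tau$ above each $\sigma$ contributes $a_{\sigma,1}+a_{\sigma,2}+1$ and the remaining $e-1$ embeddings contribute $1$ each), yields the displayed formula. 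This is essentially the proof of Lemma \ref{lem: crystalline of given weights implies det}, extended to the reducible case.

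Next, since $\rhobar$ is semisimple, it is either a direct sum of two characters or irreducible. In the reducible case, Corollary \ref{cor: if e is big then all weights are explicit, niveau 1} applies using the hypothesis $e\ge l$ and the determinant identity just established: its proof explicitly produces a decomposition $\Hom(k,\Flbar)=J\coprod J^c$ and integers $0\le\delta_\sigma\le e-1$ such that $\rhobar|_{I_K}$ takes the form required by Lemma \ref{lem:explicit semisimple crystalline lifts in niveau 1}, which is precisely the form appearing in Definition \ref{defn: Schein niveau 1}. Hence $a\in\WSch(\rhobar)$. In the irreducible case, the analogous Corollary \ref{cor: if e is big then all weights are explicit, niveau 2} does the same thing on the unramified quadratic extension $K'/K$: one writes $\rhobar\cong\Ind_{G_{K'}}^{G_K}\phi$, and the proof of that corollary exhibits $J\subset\Hom(k',\Flbar)$ (containing exactly one embedding extending each element of $\Hom(k,\Flbar)$) and integers $\delta_\sigma$ matching Definition \ref{defn: Schein niveau 2}, so again $a\in\WSch(\rhobar)$.

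Finally, by the remark following Definition \ref{defn: Wexplicit}, $\Wexplicit(\rhobar)=\WSch(\rhobar)$ when $\rhobar$ is semisimple, so $a\in\Wexplicit(\rhobar)$. Combining with the inclusions $\Wexplicit(\rhobar)\subset\Wdiag(\rhobar)\subset\Wcris(\rhobar)$ from Proposition \ref{prop: GHS weights are pot diag weights} and the trivial inclusion, all three sets agree. The set of weights satisfying the determinant identity is contained in each of them by Corollaries \ref{cor: if e is big then all weights are explicit, niveau 1} and \ref{cor: if e is big then all weights are explicit, niveau 2} (producing a (potentially) diagonalizable crystalline lift of every allowed Hodge type), and contains $\Wcris(\rhobar)$ by the first paragraph. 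The only real content is the extraction of $J$ and the $\delta_\sigma$ from the raw determinant condition, which is exactly what the two corollaries already do; no further obstacle remains.
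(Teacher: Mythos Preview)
Your proof is correct and follows essentially the same approach as the paper, which simply cites Lemma \ref{lem: crystalline of given weights implies det} together with Corollaries \ref{cor: if e is big then all weights are explicit, niveau 1} and \ref{cor: if e is big then all weights are explicit, niveau 2}. You are right to note that Lemma \ref{lem: crystalline of given weights implies det} is stated only for irreducible $\rhobar$, and your extension of it to the reducible case (via $\det\rho$ and Lemma \ref{lem:existence of crystalline chars}) is the correct and straightforward fix; the paper's proof of that lemma does not in fact use irreducibility.
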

\begin{proof}
  This is an immediate consequence of Lemma \ref{lem: crystalline of
    given weights implies det}, and Corollaries \ref{cor: if e is big then all weights are explicit,
    niveau 1} and \ref{cor: if e is big then all weights are explicit,
    niveau 2}.\end{proof}

\begin{defn}\label{defn:regular weight for FL} 
  We say that a Serre weight $a\in(\Z^2_+)^{\Hom(k,\Flbar)}$ is
  \emph{regular} if $a_{\sigma,1}-a_{\sigma,2}\le l-3$ for all
  $\sigma\in\Hom(k,\Flbar)$.

\end{defn}

\begin{lem}\label{lem: regular weights are explicit if K is unramified}If $K$ is absolutely unramified and
  $a\in(\Z^2_+)^{\Hom(k,\Flbar)}$ is a regular Serre weight, then if
  $a\in \Wcris(\rhobar)$ then $a\in \Wexplicit(\rhobar)$.
  
\end{lem}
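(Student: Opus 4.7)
The plan is to use Fontaine-Laffaille theory to read off the structure of $\rhobar|_{I_K}$ from any crystalline lift of type $\lambda$, and then either verify the conditions defining $\Wexplicit$ directly on inertia or produce the required upper-triangular lift via the lemmas already proved. Since $K/\Ql$ is unramified and $a$ is regular, each pair of Hodge-Tate weights $\{a_{\sigma,1}+1, a_{\sigma,2}\}$ of a lift $\rho$ of type $\lambda$ has gap at most $l-2$, placing it inside the Fontaine-Laffaille range per embedding. After twisting by a suitable crystalline character with trivial reduction (which changes neither $\Wcris$ nor $\Wexplicit$), or equivalently by working with the embedding-indexed version of Fontaine-Laffaille, we may attach to $\rho$ an integral Fontaine-Laffaille module $\tilde M$ over $W(k)\otimes_\Z \CO$ whose reduction $\bar M$ gives $\rhobar$.

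From the explicit structure of $\bar M$, one reads off $\rhobar^{ss}|_{I_K}$: it is a sum of two fundamental characters, determined by a choice of partition of the filtration jumps at each embedding. Concretely, there is a subset $J$ of $\Hom(k,\Flbar)$ (respectively of $\Hom(k',\Flbar)$, in the niveau-two case) such that
\[
\rhobar^{ss}|_{I_K} \cong \prod_{\sigma \in J}\omega_\sigma^{a_{\sigma,1}+1}\prod_{\sigma \in J^c}\omega_\sigma^{a_{\sigma,2}} \;\oplus\; \prod_{\sigma \in J^c}\omega_\sigma^{a_{\sigma,1}+1}\prod_{\sigma \in J}\omega_\sigma^{a_{\sigma,2}}.
\]
Since $e=1$, every $\delta_\sigma$ in Definitions \ref{defn: Schein niveau 2}, \ref{defn: Schein niveau 1}, and \ref{defn: GHS niveau 1} is forced to be zero, and the form above is exactly what these definitions demand.

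If $\rhobar$ is irreducible, then $\rhobar|_{I_K} = \rhobar^{ss}|_{I_K}$ satisfies Definition \ref{defn: BDJ niveau 2}, so $a \in \WBDJ(\rhobar) = \WSch(\rhobar) = \Wexplicit(\rhobar)$ by Lemma \ref{lem:GHS agrees with bdj agrees with Schein where both defined}. If $\rhobar$ is reducible and semisimple, the inertial form above matches the hypothesis of Lemma \ref{lem:explicit semisimple crystalline lifts in niveau 1} (with $\delta_\sigma = 0$), which produces a diagonal crystalline lift of the correct Hodge type and hence $a \in \WGHS(\rhobar) = \Wexplicit(\rhobar)$. If $\rhobar$ is reducible but non-split, then $\bar M$ itself is a non-split extension $0 \to \bar N \to \bar M \to \bar N' \to 0$ of rank-one Fontaine-Laffaille modules with prescribed filtration jumps, and we need a crystalline upper-triangular lift; for this we lift $\bar N$ and $\bar N'$ to characteristic-zero rank-one FL modules with the corresponding jumps and invoke surjectivity of the reduction map $\Ext^1_{FL}(N',N) \twoheadrightarrow \Ext^1_{FL}(\bar N', \bar N)$ in the regular range, producing an integral extension whose associated Galois representation is an upper-triangular crystalline lift of $\rhobar$ of Hodge type $\lambda$, so $a \in \WGHS(\rhobar) = \Wexplicit(\rhobar)$.

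The main technical obstacle is the Fontaine-Laffaille Ext-surjectivity in the non-split reducible case: when the weight of the twist character approaches the Fontaine-Laffaille boundary $l-2$, one must check carefully that no obstruction to lifting the extension class arises. This is a standard but delicate integral $p$-adic Hodge theory computation, which either follows by a direct resolution argument in the FL category or by comparison with the explicit Breuil-Kisin / Fontaine-Laffaille equivalence in the regular weight range.
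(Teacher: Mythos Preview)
Your approach is correct in outline and is essentially what lies behind the citations the paper uses. The paper does not prove this lemma from scratch: in the reducible case it simply invokes Lemma~1.4.2 of \cite{BLGGT} (and the discussion before it), and in the irreducible case it invokes Theorem~E of \cite{0807.1078}. Both of these references carry out exactly the Fontaine--Laffaille analysis you sketch: regularity forces the Hodge--Tate gaps into the range $[0,l-2]$, so integral FL theory applies, the inertial semisimplification is read off from the filtration jumps embedding-by-embedding, and (in the reducible case) the existence of a reducible crystalline lift follows from the compatibility of the FL equivalence with sub/quotients together with surjectivity of reduction on $\Ext^1$ groups in the FL category.

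So there is no genuine difference in strategy; you have unpacked what the paper outsources. Two comments on execution. First, in the irreducible case you should say a word about why the rank-two mod~$l$ FL module with no rank-one sub gives niveau-two fundamental characters on inertia (i.e.\ why the partition $J \subset \Hom(k',\Flbar)$ rather than $\Hom(k,\Flbar)$ appears); this is the content of the classification of simple objects in mod~$l$ FL theory, and is precisely what Theorem~E of \cite{0807.1078} records. Second, in the non-split reducible case you correctly flag the $\Ext^1$-surjectivity as the crux, but your justification (``standard but delicate\ldots direct resolution argument'') is not a proof. The clean way to see it, and what \cite{BLGGT} uses, is that FL modules with weights in $[0,l-2]$ form an abelian category equivalent to a category of torsion-crystalline $G_K$-representations, and this equivalence is exact and compatible with reduction mod $l$; surjectivity on $\Ext^1$ then follows formally from the vanishing of the relevant $\Ext^2$ (or equivalently from the fact that the integral FL category has enough injectives/projectives in the required range). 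If you want a self-contained argument you should state this rather than gesture at Breuil--Kisin theory, which is not needed here.
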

\begin{proof}In the reducible case, this is a special case (the case $n=2$) of Lemma
  1.4.2 
of \cite{BLGGT} and the discussion immediately preceding it. In the
  irreducible case it is an immediate consequence of Theorem E of \cite{0807.1078}.\end{proof}

\begin{rem}

It is also possible to argue globally to obtain bounds on the set of
Serre weights by considering lifts of weight 0 and nontrivial type, as
was done in \cite{geebdj} and \cite{MR2430440}. In \cite{geeliusavitt}
these methods are combined with the results of this paper to
completely determine the set of Serre weights in the totally ramified
case; see Theorem \ref{thm: the main result of GLS - iff in totally
  ramified case} below.
\begin{defn} Let $e$ be a positive integer.
  We say that a Serre weight $a\in(\Z^2_+)^{\Hom(k,\Flbar)}$ is
  $e$-\emph{regular} if $a_{\sigma,1}-a_{\sigma,2}\le l-1-e$ for all
  $\sigma\in\Hom(k,\Flbar)$.\end{defn}
The arguments of
\cite{MR2430440} can presumably be carried over to the present setting
to prove the following analogue of Theorem 3.4 of
\cite{MR2430440}.Suppose that $\rbar:G_F\to\GL_2(\Flbar)$ is irreducible and modular of some Serre
  weight $a\in(\Z^2_+)_0^{\coprod_{v|l}\Hom(k_v,\Flbar)}$. Let $v|l$ be a
  place of $F$ such that $\rbar|_{G_{F_v}}$ is irreducible and the
  corresponding weight $a_v\in(\Z^2_+)^{\Hom(k_v,\Flbar)}$ is
  $e$-regular. Then $a_v\in \Wexplicit(\rbar|_{G_{F_v}})$.
\end{rem}

\section{The main theorems}\label{sec:main theorem}\subsection{}We now combine the
results of the previous sections to prove a variety of concrete theorems.

Fix an imaginary CM field $F$ with maximal totally real subfield
$F^+$, such that \begin{itemize}
\item $F/F^+$ is unramified at all finite places.
\item Every place $v|l$ of $F^+$ splits in $F$.
\item $[F^+:\Q]$ is even.
\end{itemize}
Let $\rbar:G_F\to\GL_2(\Flbar)$ be a continuous irreducible
representation which is modular in the sense of Definition \ref{defn:
  modular of some Serre weight}. In particular, $\rbar$ has split
ramification in the sense of Definition \ref{defn: galois split
  ramification}, and $\rbar^c\cong\rbar^\vee\epsilonbar_l^{-1}$. We
define sets $\WBDJ(\rbar)$ and $\Wexplicit(\rbar)$ of Serre weights as follows
(cf. Definition \ref{defn: serre weights global}). The set $\WBDJ(\rbar)$ is
only defined if $l$ is unramified in $F$.
\begin{defn}
  \label{defn: global sets of explicit weights}$\Wexplicit(\rbar)$
  (respectively $\WBDJ(\rbar)$) is
  the set of Serre weights
  $a\in(\Z^2_+)_0^{\coprod_{w|l}\Hom(k_w,\Flbar)}$ such that for each
  place $w|l$, the corresponding Serre weight
  $a_w\in(\Z^2_+)^{\Hom(k_w,\Flbar)}$ is an element of
  $\Wexplicit(\rbar|_{G_{F_w}})$ (respectively $\WBDJ(\rbar|_{G_{F_w}})$).
\end{defn}
\begin{rem}
  In fact $\WBDJ(\rbar)=\Wexplicit(\rbar)$ when both are defined, but as
  the definition of $\WBDJ(\rbar)$ is perhaps more familiar to the
  reader, we prefer to separate them.
\end{rem}
\begin{thm}
  \label{thm: explicit local lifts implies Serre
    weight}Let $F$ be an imaginary CM field with maximal totally real subfield
  $F^+$. Assume that $\zeta_l\notin F$, that $F/F^+$ is unramified at all finite places,
  that every place of $F^+$ dividing $l$ splits completely in $F$,
  and that $[F^+:\Q]$ is even. Suppose that $l>2$, and that
  $\rbar:G_F\to\GL_2(\Flbar)$ is an irreducible modular
  representation with split ramification. Assume that $\rbar(G_{F(\zeta_l)})$ is adequate.

 Let $a\in(\Z^2_+)_0^{\coprod_{w|l}\Hom(k_w,\Flbar)}$ be a
  Serre weight. Assume that $a\in \Wexplicit(\rbar)$. Then $\rbar$ is
  modular of weight $a$. 
\end{thm}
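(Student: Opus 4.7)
The plan is to deduce this theorem almost immediately from results already established in the paper. The key observation is that Theorem \ref{thm: potentially diagonalizable local lifts implies Serre weight} already gives the analogous modularity statement under the hypothesis $a\in\Wdiag(\rbar)$, so it suffices to show the inclusion $\Wexplicit(\rbar)\subset \Wdiag(\rbar)$, and then invoke that theorem directly. All the hard work (producing an appropriate RACSDC lift via the lifting theorem of Section \ref{sec:A lifting theorem}) is already contained in the proof of Theorem \ref{thm: potentially diagonalizable local lifts implies Serre weight}.

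First I would unpack the global definitions. By Definition \ref{defn: global sets of explicit weights}, the assumption $a\in\Wexplicit(\rbar)$ means that for each place $w\mid l$ of $F$, the local weight $a_w\in(\Z^2_+)^{\Hom(k_w,\Flbar)}$ lies in $\Wexplicit(\rbar|_{G_{F_w}})$. By Proposition \ref{prop: GHS weights are pot diag weights} applied at each such place (which in turn rests on Lemma \ref{lem:explicit semisimple crystalline lifts in niveau 2} in the irreducible case and on property (\ref{semisimp}) of $\sim$ together with the definition of $\WGHS$ in the reducible case), we have $\Wexplicit(\rbar|_{G_{F_w}})\subset \Wdiag(\rbar|_{G_{F_w}})$. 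Thus $a_w\in\Wdiag(\rbar|_{G_{F_w}})$ for every $w\mid l$, which by Definition \ref{defn: serre weights global} is precisely the statement that $a\in\Wdiag(\rbar)$.

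With this inclusion in hand, the hypotheses of Theorem \ref{thm: potentially diagonalizable local lifts implies Serre weight} are all satisfied: $F/F^+$ is unramified at finite places with $[F^+:\Q]$ even, every $v\mid l$ in $F^+$ splits in $F$, $\zeta_l\notin F$, $l>2$, $\rbar$ is irreducible and modular with split ramification, $\rbar(G_{F(\zeta_l)})$ is adequate, and $a\in\Wdiag(\rbar)$. Applying that theorem then yields that $\rbar$ is modular of weight $a$, which is the desired conclusion.

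There is essentially no obstacle here, since the entire content has already been isolated: the local-to-global comparison is tautological given the definitions, and the local inclusion $\Wexplicit\subset\Wdiag$ is exactly Proposition \ref{prop: GHS weights are pot diag weights}. The proof is thus a one-line citation: combine Proposition \ref{prop: GHS weights are pot diag weights} with Theorem \ref{thm: potentially diagonalizable local lifts implies Serre weight}.
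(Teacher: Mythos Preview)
Your proposal is correct and follows exactly the same approach as the paper: the paper's proof is simply ``By Proposition \ref{prop: GHS weights are pot diag weights}, $a\in\Wdiag(\rbar)$, so the result follows from Theorem \ref{thm: potentially diagonalizable local lifts implies Serre weight}.'' Your additional unpacking of the global definitions to reduce to the local Proposition \ref{prop: GHS weights are pot diag weights} is a harmless elaboration of the same argument.
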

\begin{proof}
  By Proposition \ref{prop: GHS weights are pot diag weights},
  $a\in\Wdiag(\rbar)$, so the result follows from Theorem \ref{thm: potentially diagonalizable local lifts implies Serre
    weight}.\end{proof} We can make this result particularly explicit in the cases
where $l$ is either unramified or highly ramified in $F$. We say that
a Serre weight $a\in(\Z^2_+)_0^{\coprod_{w|l}\Hom(k_w,\Flbar)}$ is
\emph{regular} if for each $w|l$ the corresponding Serre weight $a_w$
is regular in the sense of Definition \ref{defn:regular weight for FL}.
\begin{thm}
  \label{thm:explicit BDJ thm}Let $F$ be an imaginary CM field with
  maximal totally real subfield $F^+$. Assume that $\zeta_l\notin F$,
  that $F/F^+$ is unramified at all finite places, that every place of
  $F^+$ dividing $l$ splits completely in $F$, and that $[F^+:\Q]$ is
  even. Assume that $l$ is unramified in $F$. Suppose that $l>2$,
  and that $\rbar:G_F\to\GL_2(\Flbar)$ is an irreducible modular
  representation with split ramification. Assume that
  $\rbar(G_{F(\zeta_l)})$ is adequate.

 Let $a\in(\Z^2_+)_0^{\coprod_{w|l}\Hom(k_w,\Flbar)}$ be a
  Serre weight. Assume that $a\in \WBDJ(\rbar)$. Then $\rbar$ is
  modular of weight $a$. Conversely, if $a$ is regular and $\rbar$ is
  modular of weight $a$, then $a\in\WBDJ(\rbar)$. 
\end{thm}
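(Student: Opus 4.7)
The plan is to deduce both directions of the theorem by combining Theorem \ref{thm: explicit local lifts implies Serre weight} with the local comparison results already established in Section \ref{sec:Conjectures}. Since $l$ is unramified in $F$, for every place $w|l$ the completion $F_w/\Q_l$ is unramified, so Lemma \ref{lem:GHS agrees with bdj agrees with Schein where both defined} applies to $\rbar|_{G_{F_w}}$ regardless of whether it is reducible or irreducible: in the reducible case $\WBDJ(\rbar|_{G_{F_w}}) = \WGHS(\rbar|_{G_{F_w}})$, and in the irreducible case (which is automatically semisimple) $\WBDJ(\rbar|_{G_{F_w}}) = \WSch(\rbar|_{G_{F_w}})$. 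Comparing with Definition \ref{defn: Wexplicit}, this gives $\WBDJ(\rbar|_{G_{F_w}}) = \Wexplicit(\rbar|_{G_{F_w}})$ for each $w|l$, and hence the global equality $\WBDJ(\rbar) = \Wexplicit(\rbar)$ by Definitions \ref{defn: global sets of explicit weights} and \ref{defn: serre weights global}.

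For the forward implication, I would then invoke Theorem \ref{thm: explicit local lifts implies Serre weight}: the hypotheses on $F$, $\rbar$, and the adequacy of $\rbar(G_{F(\zeta_l)})$ carry over directly from the statement of the present theorem, and we have just observed that the assumption $a \in \WBDJ(\rbar)$ is equivalent to $a \in \Wexplicit(\rbar)$. This immediately yields that $\rbar$ is modular of weight $a$.

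For the converse, suppose $a$ is regular and $\rbar$ is modular of weight $a$. By Corollary \ref{cor: modular of some weight implies crystalline lifts exist}, $a \in \Wcris(\rbar)$, i.e.\ $a_w \in \Wcris(\rbar|_{G_{F_w}})$ for every place $w|l$. Each local field $F_w$ is absolutely unramified and each $a_w$ is regular in the sense of Definition \ref{defn:regular weight for FL}, so Lemma \ref{lem: regular weights are explicit if K is unramified} upgrades this to $a_w \in \Wexplicit(\rbar|_{G_{F_w}})$. Applying the equality $\Wexplicit(\rbar|_{G_{F_w}}) = \WBDJ(\rbar|_{G_{F_w}})$ from the first paragraph, we conclude $a_w \in \WBDJ(\rbar|_{G_{F_w}})$ for all $w|l$, hence $a \in \WBDJ(\rbar)$.

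There is no real obstacle here: the theorem is essentially a repackaging of Theorem \ref{thm: explicit local lifts implies Serre weight}, together with the identification of $\WBDJ$ with $\Wexplicit$ in the unramified case and the upper bound on $\Wcris$ for regular weights supplied by Lemma \ref{lem: regular weights are explicit if K is unramified}. The only thing to be careful about is to verify that Lemma \ref{lem:GHS agrees with bdj agrees with Schein where both defined} covers both the reducible and the (semisimple) irreducible local cases at each $w|l$, which it does.
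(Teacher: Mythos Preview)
Your proof is correct and follows essentially the same approach as the paper's own proof: you establish $\WBDJ(\rbar)=\Wexplicit(\rbar)$ via Lemma \ref{lem:GHS agrees with bdj agrees with Schein where both defined} and Definition \ref{defn: Wexplicit}, then invoke Theorem \ref{thm: explicit local lifts implies Serre weight} for the forward direction and Corollary \ref{cor: modular of some weight implies crystalline lifts exist} together with Lemma \ref{lem: regular weights are explicit if K is unramified} for the converse. Your version simply spells out the case analysis (reducible versus irreducible at each $w$) in more detail than the paper does.
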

\begin{proof}
   By Definition \ref{defn: Wexplicit} and Lemma
  \ref{lem:GHS agrees with bdj agrees with Schein where both defined},
  $\WBDJ(\rbar)=\Wexplicit(\rbar)$. The result now follows from
  Theorem \ref{thm: explicit local lifts implies Serre
    weight}, Corollary \ref{cor: modular of some weight implies
    crystalline lifts exist} and Lemma \ref{lem: regular weights are explicit if K is unramified}.
\end{proof}

\begin{thm}
  \label{thm:explicit highly ramified}Let $F$ be an imaginary CM field with maximal totally real subfield
  $F^+$. Assume that $\zeta_l\notin F$, that $F/F^+$ is unramified at all finite places,
  that every place of $F^+$ dividing $l$ splits completely in $F$,
  and that $[F^+:\Q]$ is even. Assume that for each place $w|l$ of $F$
  the absolute ramification index of $F_w$ is at least $l$, and that
  $\rbar|_{G_{F_w}}$ is semisimple. Suppose that $l>2$, and that
  $\rbar:G_F\to\GL_2(\Flbar)$ is an irreducible modular
  representation with split ramification. Assume that $\rbar(G_{F(\zeta_l)})$ is adequate.

 Let $a\in(\Z^2_+)_0^{\coprod_{w|l}\Hom(k_w,\Flbar)}$ be a
  Serre weight. Then $\rbar$ is
  modular of weight $a$ if and only if for each $w|l$, 
\[\det\rbar|_{I_{F_w}}=\prod_{\sigma\in
    \Hom(k_w,\Flbar)}\omega_\sigma^{a_{\sigma,1}+a_{\sigma,2}+e}.\]
\end{thm}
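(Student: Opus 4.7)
The plan is to combine the global modularity results of Theorem \ref{thm: explicit local lifts implies Serre weight} and Corollary \ref{cor: modular of some weight implies crystalline lifts exist} with the purely local computation of Lemma \ref{lem: all weights are explicit if e is big enough}, which is perfectly adapted to the hypotheses $e \geq l$ and semisimplicity. The strategy is completely local-global: the hypotheses allow us to translate the global condition of being modular of weight $a$ into a product of local conditions at each $w \mid l$, and then Lemma \ref{lem: all weights are explicit if e is big enough} reduces each local condition to the stated formula for $\det \rbar|_{I_{F_w}}$.

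For the forward direction, assume $\rbar$ is modular of weight $a$. By Corollary \ref{cor: modular of some weight implies crystalline lifts exist}, $a \in \Wcris(\rbar)$, so by Definition \ref{defn: serre weights global}, for each place $w \mid l$ of $F$ we have $a_w \in \Wcris(\rbar|_{G_{F_w}})$. Since the absolute ramification index of $F_w$ is at least $l$ and $\rbar|_{G_{F_w}}$ is semisimple, Lemma \ref{lem: all weights are explicit if e is big enough} describes $\Wcris(\rbar|_{G_{F_w}})$ explicitly as the set of Serre weights whose sum of components, taken in the inertial characters, recovers $\det \rbar|_{I_{F_w}}$. This gives the desired determinantal equality at each $w$.

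For the reverse direction, suppose the determinant condition holds at each $w \mid l$. Applying Lemma \ref{lem: all weights are explicit if e is big enough} in the opposite direction, we obtain $a_w \in \Wexplicit(\rbar|_{G_{F_w}})$ for each such $w$. By Definition \ref{defn: global sets of explicit weights} this means $a \in \Wexplicit(\rbar)$, and Theorem \ref{thm: explicit local lifts implies Serre weight} then yields that $\rbar$ is modular of weight $a$, completing the proof.

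There is no real obstacle here; the work has all been done in the earlier sections. The only thing to double-check is that the hypotheses of Theorem \ref{thm: explicit local lifts implies Serre weight} (namely $\zeta_l \notin F$, $F/F^+$ unramified at finite places, $l$-adic places of $F^+$ split in $F$, $[F^+:\Q]$ even, $l>2$, $\rbar$ irreducible and modular with split ramification, and $\rbar(G_{F(\zeta_l)})$ adequate) and of Lemma \ref{lem: all weights are explicit if e is big enough} (namely $e \geq l$ and $\rbar|_{G_{F_w}}$ semisimple at each $w \mid l$) are all directly assumed in the statement of Theorem \ref{thm:explicit highly ramified}, so the two results slot together without any further hypothesis-chasing.
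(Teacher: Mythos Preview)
Your proof is correct and follows essentially the same approach as the paper: necessity via Corollary \ref{cor: modular of some weight implies crystalline lifts exist} and Lemma \ref{lem: all weights are explicit if e is big enough}, sufficiency via Lemma \ref{lem: all weights are explicit if e is big enough} and Theorem \ref{thm: explicit local lifts implies Serre weight}. Your additional hypothesis-check is a welcome bit of care but the argument is otherwise identical to the paper's.
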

\begin{proof}
  The necessity of the given condition follows from Corollary
  \ref{cor: modular of some weight implies crystalline lifts exist}
  and Lemma \ref{lem: all weights are explicit if e is big enough}, and
  the sufficiency from Theorem \ref{thm: explicit local lifts implies Serre
    weight} and Lemma \ref{lem: all weights are explicit if e is big
    enough} again.
\end{proof}
Finally, using the results of this paper together with potential
automorphy techniques and calculations with Breuil modules, the
following theorem is proved in \cite{geeliusavitt}.\begin{thm}
  \label{thm: the main result of GLS - iff in totally ramified case}Let
  $F$ be an imaginary CM field with maximal totally real subfield
  $F^+$, and suppose that $F/F^+$ is unramified at all finite places,
  that every place of $F^+$ dividing $l$ splits completely in $F$,
  that $\zeta_l\notin F$, and that $[F^+:\Q]$ is even. Suppose that
  $l>2$, and that $\rbar:G_F\to\GL_2(\Flbar)$ is an irreducible
  modular representation with split ramification such that
  $\rbar(G_{F(\zeta_l)})$ is adequate. Assume that for each place $w|l$
  of $F$, $F_w/\Ql$ is totally ramified.

 Let  $a\in(\Z^2_+)_0^{\coprod_{w|l}\Hom(k_w,\Flbar)}$ be a Serre weight. Then
 $a\in\Wexplicit(\rbar)$ if and only if $\rbar$ is modular of
 weight $a$.
\end{thm}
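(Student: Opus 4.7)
The theorem has two directions. The $(\Leftarrow)$ direction, that $a \in \Wexplicit(\rbar)$ implies $\rbar$ is modular of weight $a$, is immediate: it is precisely Theorem \ref{thm: explicit local lifts implies Serre weight} applied in the present setting, which requires no additional input beyond what is already in this paper. So all the work lies in the $(\Rightarrow)$ direction, the \emph{necessity} of the explicit description.

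For necessity, suppose $\rbar$ is modular of weight $a$. By Corollary \ref{cor: modular of some weight implies crystalline lifts exist} we immediately get $a \in \Wcris(\rbar)$, i.e., for each place $w\mid l$ of $F$ the representation $\rbar|_{G_{F_w}}$ admits a crystalline lift of some Hodge type lifting $a_w$. Since each $F_w/\Q_l$ is totally ramified, the residue field $k_w = \F_l$ and $\rbar|_{G_{F_w}}$ is a two-dimensional representation over a totally ramified base — the setting in which integral $l$-adic Hodge theory (Breuil modules, or equivalently Kisin modules) gives a reasonably explicit classification of crystalline $\cO_{\Qlbar}$-lifts with small Hodge--Tate weights. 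The plan is therefore to prove the local statement: for $K/\Q_l$ totally ramified with $k = \F_l$, any semisimple $\rhobar \colon G_K \to \GL_2(\Flbar)$ (or general $\rhobar$, in the reducible case) with $a \in \Wcris(\rhobar)$ automatically has $a \in \Wexplicit(\rhobar)$.

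I would prove this local statement as follows. First, split into the irreducible and reducible cases for $\rhobar$. In the irreducible case, any crystalline lift is automatically (after restriction to $G_{K'}$) an induction from the unramified quadratic extension, and Breuil module calculations over the totally ramified base give tight constraints on the tame inertial characters $\omega_\sigma^{a_{\sigma,1}+1+\delta_\sigma}$ appearing in $\rhobar|_{I_K}$, forcing them into the form of Definition \ref{defn: Schein niveau 2} for some choice of $J$ and $\{\delta_\sigma\}$. In the reducible case one classifies the possible extension classes of rank-one Breuil submodules, which in the totally ramified setting produces a lift of the specific upper-triangular form required by Definition \ref{defn: GHS niveau 1}; here the fact that $K/\Q_l$ is totally ramified prevents the extra complications that arise over unramified extensions. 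To handle regularity defects (i.e.\ weights where $a_{\sigma,1} - a_{\sigma,2}$ can be as large as $l-1$) and to handle the non-semisimple reducible case, one supplements the purely local Breuil module argument with a global input: apply potential automorphy to pass to a suitably chosen CM extension where $\rbar|_{G_{F'}}$ becomes modular of a weight that already lies in $\Wexplicit$ (for instance by twisting and making use of Theorem \ref{thm: explicit local lifts implies Serre weight} together with known cases such as the regular case of Lemma \ref{lem: regular weights are explicit if K is unramified}), and descend.

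The main obstacle will be the Breuil module bookkeeping for non-semisimple reducible $\rhobar$ with weights that are not regular, where one must rule out ``spurious'' crystalline lifts whose reductions give weights outside the list in Definition \ref{defn: GHS niveau 1}. Concretely, one must show that the obstructions to lifting a given extension class to a crystalline representation with the wrong choice of $(J, \{\delta_\sigma\})$ are always nonzero — this is where the totally ramified hypothesis enters essentially, since it rigidifies the Breuil module in exactly the way needed for these obstructions to be computable. Once this local analysis is complete, combining with Corollary \ref{cor: modular of some weight implies crystalline lifts exist} place-by-place gives $a \in \Wexplicit(\rbar)$, completing the $(\Rightarrow)$ direction.
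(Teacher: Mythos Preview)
The paper does not actually prove this theorem: immediately before the statement it says ``the following theorem is proved in \cite{geeliusavitt}'', using ``the results of this paper together with potential automorphy techniques and calculations with Breuil modules''. So there is no proof in the paper to compare your proposal against; the theorem is stated here only as an announcement of a result established elsewhere.

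That said, your identification of the two directions is correct, and the easy direction (that $a\in\Wexplicit(\rbar)$ implies modularity of weight $a$) is indeed exactly Theorem \ref{thm: explicit local lifts implies Serre weight}. Your list of ingredients for the hard direction --- Breuil module computations in the totally ramified setting, combined with potential automorphy --- matches what the paper says is used in \cite{geeliusavitt}.

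However, your sketch of the hard direction contains some inaccuracies and gaps you should be aware of. First, it is not true that ``any crystalline lift is automatically (after restriction to $G_{K'}$) an induction from the unramified quadratic extension'' in the irreducible case: a crystalline lift of an irreducible $\rhobar$ need not itself be induced, so one cannot read off the structure so directly. Second, you invoke Lemma \ref{lem: regular weights are explicit if K is unramified}, but that lemma is for \emph{unramified} $K$, which is the opposite of the present totally ramified hypothesis. Third, the ``pass to a CM extension and descend'' strategy you outline is too vague to be a proof: descent of Serre weight information through solvable base change is not automatic, and the actual argument in \cite{geeliusavitt} requires substantial and careful Breuil module analysis specific to the totally ramified case rather than a reduction to known unramified results. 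In short, you have the right keywords but not yet a proof; the genuine content is in the local $p$-adic Hodge theory computations, which are lengthy and are the subject of the cited paper.
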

\appendix
\section{Adequacy}\label{app:adequacy}
\subsection{The definition}
\begin{defn}\label{defn:adequate} We call a finite
subgroup $H \subset \GL_n(\barFF_l)$ {\em adequate} if the following
conditions are satisfied.
\begin{enumerate}
\item $H$ has no non-trivial quotient of $l$-power order (i.e. $H^1(H,\barFF_l)=(0)$).
\item $l \ndiv n$.
\item The elements of $H$ with order coprime to $l$ span $M_{n \times
    n}(\barFF_l)$ over $\barFF_l$. (This implies that $\barFF_l^n$ is an irreducible representation of $H$.)
\item $H^1(H,\gothgl_n(\barFF_l))=(0)$.
\end{enumerate}
\end{defn}
(The notion of adequacy was introduced in \cite{jack}. The formulation above is as in \cite{BLGGT},
and while it is not identical to that in \cite{jack}, it is equivalent
to it by the discussion following the definition of adequacy in
Section 2.1 of \cite{BLGGT}.) 

\begin{remark} \label{coprime order and adequacy}
Note that if $l \ndiv \# H$ and $H$ acts irreducibly, then $H$ will be adequate,
as we now explain. The 
first statement in the definition of adequacy is trivial. 
For the second, observe that because $l \ndiv \# H$,
the tautological representation $H\to\GL_n(\barFF_l)$ will lift to characteristic zero,
and hence $n$ is the dimension of an irreducible characteristic zero representation of $H$ and
so divides $\#H$. It follows $l\ndiv n$. For the third, we see that elements of $H$ with 
order coprime to $l$ will just be all the elements of $H$ and will span  
$M_{n \times n}(\barFF_l)$ over $\barFF_l$ since $H$ acts irreducibly. For the fourth, we 
use Corollary 1 of section VIII.2 of \cite{MR554237}.
\end{remark}

A small point of notation: throughout this section, we will be considering 
subgroups of $\GL_n(\barFF_l)$ for some $n$, and we will often find it useful
to write $V$ for the vector space $\barFF_l^n$, especially considered as 
a representation of some subgroup of $\GL_n(\barFF_l)$ which should be clear 
from context.

The following lemmas will be useful. They were proved in the related context of bigness 
by Snowden and Wiles (see Propositions 2.1 and 2.2 of \cite{snw}), and the proofs 
generalize very straightforwardly. 
\begin{lem}\label{normal subgroups and adequacy}
Suppose $H \subset \GL_n(\barFF_l)$ is a finite subgroup, and $N \triangleleft H$
is a normal subgroup which is adequate and has $[H:N]$ prime to $l$. 
Then $H$ is adequate.
\end{lem}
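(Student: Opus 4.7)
The plan is to verify the four conditions of Definition \ref{defn:adequate} for $H$, noting that most of them transfer routinely from $N$ to $H$ thanks to the hypothesis that $[H:N]$ is prime to $l$.

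Condition (2) is immediate: $l \nmid n$ is part of the assumed adequacy of $N$. For condition (3), observe that $N \subset H$, so the elements of $N$ of order coprime to $l$ are in particular elements of $H$ of order coprime to $l$; since these span $M_{n\times n}(\barFF_l)$ by the adequacy of $N$, they \emph{a fortiori} span when viewed inside $H$.

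The main step is conditions (1) and (4), which I would handle uniformly via the inflation-restriction exact sequence. For any $\barFF_l[H]$-module $M$ one has
$$0 \to H^1(H/N, M^N) \to H^1(H, M) \to H^1(N, M)^{H/N}.$$
Apply this with $M = \barFF_l$ (trivial action) and $M = \gothgl_n(\barFF_l)$ (conjugation action). In both cases the right-hand term $H^1(N,M)^{H/N}$ vanishes because $N$ is adequate. The left-hand term $H^1(H/N, M^N)$ vanishes because $H/N$ has order prime to $l$ while $M^N$ is an $\barFF_l$-vector space: cohomology of a finite group of order prime to the characteristic of the coefficient field vanishes in positive degree (the standard averaging/transfer argument, or Corollary~1 of section~VIII.2 of \cite{MR554237} as invoked in Remark~\ref{coprime order and adequacy}). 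Hence $H^1(H,M) = 0$ in both cases, giving conditions (1) and (4).

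No step is really an obstacle; the only thing to be slightly careful about is to check that the two modules $\barFF_l$ and $\gothgl_n(\barFF_l)$ genuinely arise as $\barFF_l[H]$-modules (they do, the former trivially and the latter via the conjugation action of $H \subset \GL_n(\barFF_l)$), so that inflation-restriction applies. This completes the verification that $H$ is adequate.
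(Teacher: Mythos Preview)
Your proof is correct and follows essentially the same approach as the paper: both verify conditions (1)--(4) directly, with the key step being the inflation--restriction sequence for condition (4). Your treatment is in fact slightly more streamlined, since you kill the left-hand term $H^1(H/N, M^N)$ directly from $l \nmid [H:N]$ without first identifying $\gothgl_n(\barFF_l)^N$ as $\barFF_l \mathbf{1}$, which is what the paper does (using irreducibility and $l \nmid n$) before concluding.
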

\begin{proof} Points (1), (2) and (3) are trivial. There is an exact sequence
\[
 H^1(H/N,\gothgl_n(\barFF_l)^N) \lra H^1(H,\gothgl_n(\barFF_l)) \lra H^1(N, \gothgl_n(\barFF_l))^{G/H}
\]
Since $N$ is adequate, $H^1(N, \gothgl_n(\barFF_l))$ is trivial and so the right term vanishes.

Since $N$ is adequate, the standard representation of $N$ is irreducible (by condition (3)),
and thus $\gothgl_n(\barFF_l)^N=\barFF_l \mathbf{1}$ (this uses $l\ndiv n$). 
Then the left term in the exact sequence is just $H^1(H/N,\barFF_l)$ and vanishes since
$H/N$ has order prime to $l$ and hence no $l$ power quotients. Thus the middle term 
in the exact sequence vanishes, establishing (4).
\end{proof}

\begin{lem}\label{scalars and adequacy}
Suppose $H \subset \GL_n(\barFF_l)$ is a finite subgroup, and $k$ is a finite extension of $\F_l$. 
Then $H$ is adequate if and only if $k^\times H$ is adequate.
\end{lem}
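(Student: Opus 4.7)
The plan is to handle the two directions separately, exploiting throughout that $k^\times \cdot \mathbf{1}$ is central in $\GL_n(\barFF_l)$ and that $|k^\times|$ is prime to $l$ (since $|k|$ is a power of $l$).

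For the easy direction, suppose $H$ is adequate. Since $k^\times \cdot \mathbf{1}$ is central, $H$ is a normal subgroup of $k^\times H$, and the quotient $k^\times H / H \cong k^\times / (k^\times \cap H)$ is a quotient of $k^\times$, hence has order prime to $l$. Therefore $H \triangleleft k^\times H$ satisfies the hypotheses of Lemma \ref{normal subgroups and adequacy}, and $k^\times H$ is adequate.

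For the reverse direction, assume $k^\times H$ is adequate; I verify the four conditions for $H$. Condition (2) is immediate since $n$ is unchanged. For condition (3), suppose $g = \lambda h \in k^\times H$ has order $N$ coprime to $l$, with $\lambda \in k^\times$ and $h \in H$. Then $h^N = \lambda^{-N}\mathbf{1}$ is a scalar, and since $\lambda^{-N}$ has order dividing $|k^\times|$ which is prime to $l$, there is an integer $M$ coprime to $l$ with $h^{NM}=\mathbf{1}$; hence $h$ itself has order coprime to $l$. Now choose $n^2$ elements $g_1,\dots,g_{n^2} \in k^\times H$ of order coprime to $l$ that form an $\barFF_l$-basis of $M_{n\times n}(\barFF_l)$, and write $g_i = \lambda_i h_i$. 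The scalars $\lambda_i\in\barFF_l^\times$ are nonzero, so the $h_i$ are also linearly independent and hence span; by the observation above each $h_i$ has order coprime to $l$, verifying condition (3) for $H$.

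Conditions (1) and (4) follow from a Lyndon--Hochschild--Serre argument applied to $1 \to H \to k^\times H \to G \to 1$, where $G := k^\times H/H$ has order prime to $l$. The conjugation action of $G$ on $H$ is trivial since $k^\times \cdot\mathbf{1}$ is central, so $G$ acts trivially on $H^i(H,M)$ for $M = \barFF_l$ and $M = \gothgl_n(\barFF_l)$ (both with trivial, respectively adjoint, action — and $k^\times$ acts trivially on $\gothgl_n(\barFF_l)$ via conjugation). Since $|G|$ is prime to $l$, the groups $H^p(G, -)$ vanish for $p>0$ on any $\barFF_l$-vector space, so the spectral sequence collapses to give
\[
H^1(k^\times H, M) \cong H^1(H, M)^G = H^1(H, M)
\]
for both choices of $M$. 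Thus the vanishing of $H^1(k^\times H, \barFF_l)$ and $H^1(k^\times H, \gothgl_n(\barFF_l))$ directly yields conditions (1) and (4) for $H$. No step presents a genuine obstacle; the main thing to be careful about is the order computation in condition (3), which is where the centrality of $k^\times$ is used most essentially.
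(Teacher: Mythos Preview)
Your proof is correct and follows essentially the same strategy as the paper. Both use Lemma~\ref{normal subgroups and adequacy} for the forward direction; for the reverse direction the paper treats condition~(1) by a direct group-theoretic argument and condition~(4) via the inflation--restriction sequence with $\gothsl_n$ coefficients, whereas you handle both uniformly with the Lyndon--Hochschild--Serre spectral sequence and the prime-to-$l$ order of $G=k^\times H/H$---but this is the same mechanism (indeed the paper notes your observation parenthetically).
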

\begin{proof} Since $H$ is a normal subgroup of $k^\times H$ of prime-to-$l$ index, the `only if'
part follows from the previous lemma. We now prove the other direction, assuming 
$k^\times H$ is adequate, and showing $H$ is adequate. Point (2) is trivial. For point (1),
let $K$ be a $l$-power order quotient of $H$. Since $k^\times \cap H$ has order prime to $l$,
it has trivial image in $K$. Thus $K$ is a quotient of the group $H/(H\cap k^\times) = k^\times H/ k^\times$. By assumption, $k^\times H$ has no nontrivial $l$-power quotient so $K$ is trivial and we have point (1). For point (3) note that the elements of $k^\times H$  
of prime-to-$l$ order will have the same $\Fbar_l$ span in $M_{n \times n}(\barFF_l)$ 
as those of $H$. 

For point (4), note that it will be enough to establish $H^1(H,\gothsl_n(\Fbar_l))=(0)$
(see the discussion immediately after the definition), and we may similarly assume 
$H^1(k^\times H,\gothsl_n(\Fbar_l))=(0)$. We have an exact sequence
\[ 1 \lra H \lra k^\times H \lra G \lra 1\]
for some quotient $G$ of $k^\times$. We therefore have an exact sequence
\[
H^1(k^\times H,\gothsl_n(\Fbar_l)) \lra H^1(H,\gothsl_n(\Fbar_l))^G \lra H^2(G, (\gothsl_n(\Fbar_l))^H).
\]
The left-hand group vanishes by our assumption that $k^\times H$ is adequate. 
Since $k^\times H$ is adequate,it acts irreducibly (by condition 3), and so (since $l \ndiv n$) 
we have that $(\gothsl_n(\Fbar_l))^H$ is trivial, thus the right hand group in the exact sequence
vanishes. It follows that the middle term vanishes (alternatively, it
vanishes because $G$ has order prime to $p$). One easily checks
that $G$ acts trivially on $ H^1(H,\gothsl_n(\Fbar_l))$, so we are done.
\end{proof}

\subsection{Adequacy for $\GL_2$} 

The aim of this subsection is to explicate the notion of adequacy for subgroups of $\GL_2$.
Theorem 9 of \cite{jackapp} already tells us that in characteristic greater than 5, `adequate' 
simply means `acts irreducibly', but we would like to have results for
characteristics $3$ and $5$. 
We prove that subgroups acting irreducibly are adequate apart from some explicit exceptions. 
More precisely, we prove the following proposition:

\begin{prop} 
\label{prop:adequacy for n=2} Suppose that $l>2$ is a prime, and 
that $G\leq \GL_2(\Fbar_l)$ is a finite subgroup which acts irreducibly on $\Fbar_l^2$. 
Then precisely one of the following is true:
\begin{itemize}
\item We have $l=3$, and the image of $G$ in $\PGL_2(\Fbar_3)$ is  conjugate to
$\PSL_2(\F_3)$.
\item We have $l=5$, and the image of $G$ in $\PGL_2(\Fbar_5)$ is  conjugate to
 $\PGL_2(\F_5)$ or $\PSL_2(\F_5)$.
\item $G$ is adequate.
\end{itemize}
\end{prop}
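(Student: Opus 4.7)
The plan is to handle $l \geq 7$ via Theorem 9 of \cite{jackapp} (which asserts that in characteristic $> 5$ every irreducible subgroup of $\GL_2(\Fbar_l)$ is adequate), and then apply Dickson's classification of finite subgroups of $\PGL_2(\Fbar_l)$ to dispose of the cases $l = 3$ and $l = 5$. Let $\bar G \leq \PGL_2(\Fbar_l)$ be the image of $G$. Since $\Fbar_l^\times$ has no $l$-torsion, $l \mid |G|$ if and only if $l \mid |\bar G|$, and when $l \nmid |G|$ the group $G$ is adequate by Remark \ref{coprime order and adequacy}. So I would reduce to $l \in \{3, 5\}$ and $l \mid |\bar G|$.

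Dickson's theorem lists the possibilities for $\bar G$: cyclic; dihedral of order $2m$ with $\gcd(m,l)=1$; contained in a Borel; isomorphic to $A_4$, $S_4$, or $A_5$; or sandwiched between $\PSL_2(\F_{l^r})$ and $\PGL_2(\F_{l^r})$ for some $r \geq 1$. Irreducibility of $G$ on $\Fbar_l^2$ rules out the cyclic and Borel possibilities (both preserve a line), and the dihedral case has order coprime to $l$. Using the exceptional isomorphisms $A_4 \cong \PSL_2(\F_3)$, $S_4 \cong \PGL_2(\F_3)$, $A_5 \cong \PSL_2(\F_5)$, and $S_5 \cong \PGL_2(\F_5)$, the remaining possibilities partition into (a) a large case, where $\bar G$ lies between $\PSL_2(\F_{l^r})$ and $\PGL_2(\F_{l^r})$ with $l^r \geq 9$, and (b) a short list: $\bar G \in \{\PSL_2(\F_3), \PGL_2(\F_3), A_5\}$ at $l = 3$ (the $A_5$ being realised via the two-dimensional representation of $\SL_2(\F_5)$ over $\Fbar_3$) and $\bar G \in \{\PSL_2(\F_5), \PGL_2(\F_5)\}$ at $l = 5$.

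For case (a), adequacy of $\PSL_2(\F_{l^r})$ in its natural two-dimensional representation would be verified by a direct application of the four conditions of Definition \ref{defn:adequate}, using the well-understood module structure of $\PSL_2(\F_{l^r})$; this would then propagate to the sandwich cases via Lemma \ref{normal subgroups and adequacy} (the index is $1$ or $2$, prime to the odd prime $l$), with Lemma \ref{scalars and adequacy} used to pass freely between $G$ and a preimage in $\GL_2(\Fbar_l)$ containing scalars. For case (b) the strategy is to verify (or disprove) the conditions of Definition \ref{defn:adequate} by hand for each of the finitely many central extensions $G \leq \GL_2(\Fbar_l)$ of the relevant $\bar G$: condition (2) is automatic since $l$ is odd; condition (1) reduces to a computation of the abelianization; and the core work is condition (3), that elements of order prime to $l$ span $M_2(\Fbar_l)$, together with condition (4), that $H^1(G, \gothgl_2(\Fbar_l)) = 0$. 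One expects, and must verify, the dichotomy that precisely in $\PSL_2(\F_3)$ at $l = 3$ and in $\PSL_2(\F_5)$ and $\PGL_2(\F_5)$ at $l = 5$ the supply of semisimple classes is too meagre to span $M_2(\Fbar_l)$, with $H^1$ correspondingly nonzero, while for $\PGL_2(\F_3)$ and for $A_5$ at $l = 3$ adequacy holds.

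The main obstacle is the explicit verification of conditions (3) and (4) for the small groups in case (b). The cohomology in (4) can be organized via inflation-restriction applied to the central extensions $G \to \bar G$, but the matrix computations underlying (3) are the most delicate step, since one must both exhibit explicit failures for the three exceptional groups and confirm the spanning property for the remaining small cases.
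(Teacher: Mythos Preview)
Your overall architecture matches the paper's: reduce to $l \in \{3,5\}$ via Theorem~9 of \cite{jackapp}, invoke Dickson's classification of $\bar G$, dispose of the prime-to-$l$ cases by Remark~\ref{coprime order and adequacy}, and use Lemmas~\ref{normal subgroups and adequacy} and~\ref{scalars and adequacy} to move between $G$ and convenient preimages. The paper's proof is exactly this case analysis, carried out in full.

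However, your diagnosis of \emph{which} adequacy conditions fail in the exceptional cases is wrong, and this would derail the verification. You predict that for $\bar G = \PSL_2(\F_3)$ at $l=3$ and for $\bar G \in \{\PSL_2(\F_5),\PGL_2(\F_5)\}$ at $l=5$ the semisimple elements fail to span $M_2(\Fbar_l)$. In fact condition~(3) \emph{holds} in all these cases: already inside $\SL_2(\F_l)$ one finds four elements of order prime to $l$ spanning $M_2$ (e.g.\ for $l=3$ the paper exhibits them explicitly; for $l=5$ the identity, $\diag(2,3)$, the two obvious antidiagonal matrices work). The actual obstructions are different. For $\bar G = \PSL_2(\F_3) \cong A_4$ at $l=3$, it is condition~(1) that fails: $A_4$ surjects onto $\Z/3\Z$, so $G$ has a nontrivial quotient of $3$-power order. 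For $\bar G = \PSL_2(\F_5)$ or $\PGL_2(\F_5)$ at $l=5$, it is condition~(4): the paper cites Table~4.5 of \cite{cps} to see $H^1(\SL_2(\F_5),\gothgl_2(\Fbar_5))$ is one-dimensional, and then \cite[2.3(g)]{cps} propagates this to the $\PGL_2$ case.

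On the positive side, your outline for the $\bar G = \PGL_2(\F_3)$ case at $l=3$ and the large $\PSL_2(\F_{l^r})$ cases is correct in spirit; the paper reduces to $\GL_2(\F_3)$ (respectively $\SL_2(k)$) and verifies all four conditions directly, using Lemma~2.48 of \cite{ddt} for the $H^1$ vanishing. The $A_5$ case at $l=3$ requires more: one must first lift the projective embedding $A_5 \hookrightarrow \PGL_2(\Fbar_3)$ to a genuine representation of the binary icosahedral group $2.A_5$ (the paper does this via the theory of $l$-representation groups), identify the two possible $2$-dimensional mod-$3$ representations from the Brauer tables, and then check $H^1(2.A_5,\ad^0) = 0$ by recognising $\ad^0$ as a projective $\Fbar_3[2.A_5]$-module. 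Your proposal does not anticipate this step.
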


\begin{rem} For any $G$ as in the theorem,
its image in $\PGL_2(\Fbar_l)$, which we will call $\bar{G}$,
either must be isomorphic to one of $A_5$, $S_4$, 
$A_4$, or a dihedral group of order coprime to $l$, or must be 
conjugate to $\PSL_2(k)$ or $\PGL_2(k)$ for some finite 
extension $k$ of $\Fbar_l$ (see Theorem 2.47 (b) of \cite{ddt}).
  We show in the course of the proof that if $l=3$ (resp.\ $l=5$) and
  if $\bar G$ is isomorphic to $A_4$ (resp.\ $A_5$) then
  in fact, $\bar G$ is conjugate to $\PSL_2(\F_3)$ (resp.\
  $\PSL_2(\F_5)$). 
\end{rem}

\begin{proof}
The proof will be a very straightforward case analysis. On the one
hand, we have the list of possibilities for $\bar G$ recalled in the
previous remark.
We divide into cases according to which of these is 
true, further subdividing the $\PSL_2(k)$ and $\PGL_2(k)$ cases into the subcase 
where $|k|=l$ and
the subcase where $|k|>l$. On the other hand, we divide into cases according to
the value of $l$, considering the cases $l=3$, $l=5$ and $l\geq 7$. The resulting
`two dimensional' collection of cases is depicted in Figure \ref{tbl: adequacy cases}.
We will often give arguments which treat several cases in this collection 
at once, and the reader may find it useful to refer to Figure \ref{tbl: adequacy cases}
which summarizes which argument is used in which case. 
We will number the various points of the argument to make them easier to refer to.

But before we move into the detailed consideration of the cases, it will
be useful to discuss in a little more detail the cases where $\bar{G}$ 
is isomorphic to $A_4$ and $A_5$. Specifically, it will be important to us to establish
\begin{sublem} Let us write $2.A_4$ (resp $2.A_5$) for the binary
tetrahedral group (resp binary icosahedral group).
(Thus if we consider $A_5$ as the group of symmetries of an 
icosahedron, a subgroup of $\SO(3)$, then $2.A_5$ is the inverse image of $A_5$ under
the natural 2-to-1 map $\SU(2)\to\SO(3)$; and similarly for $A_4$ and the group of
symmetries of the tetrahedron.) 

Now suppose that $\bar{G}$ is isomorphic to $A_k$ for $k\in\{4,5\}$. 
Then we can find some representation $\widetilde{\phi}:2.A_k\to\FL_2(\Flbar)$
such that 
$\Fbar_l^\times \widetilde{\phi}(2.A_k) = \Fbar_l^\times G$.
\end{sublem}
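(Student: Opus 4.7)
The plan is to lift the projective representation $2.A_k \twoheadrightarrow A_k \xrightarrow{\sim} \bar G \hookrightarrow \PGL_2(\Flbar)$ to a linear representation of $2.A_k$. Observe first that since $l$ is odd and $\Flbar$ is algebraically closed, every element of $\GL_2(\Flbar)$ can be rescaled by a square root of its determinant to lie in $\SL_2(\Flbar)$, so the projection $\GL_2(\Flbar) \twoheadrightarrow \PGL_2(\Flbar)$ has kernel precisely $\Flbar^\times$. Once I produce any group homomorphism $\widetilde{\phi}:2.A_k \to \GL_2(\Flbar)$ lifting the projective representation above, its image projects onto $\bar G$ in $\PGL_2(\Flbar)$, and hence $\Flbar^\times \widetilde{\phi}(2.A_k)$ is the preimage of $\bar G$ in $\GL_2(\Flbar)$, which equals $\Flbar^\times G$ by definition of $\bar G$.

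The obstruction to such a lift lies in $H^2(2.A_k,\Flbar^\times)$, obtained by pulling back the central extension $1 \to \Flbar^\times \to \GL_2(\Flbar) \to \PGL_2(\Flbar) \to 1$ along the projective representation. My plan is to show that this cohomology group vanishes. By the universal coefficient theorem for group cohomology (with trivial action), there is an exact sequence
\[
0 \to \Ext((2.A_k)^{\ab},\Flbar^\times) \to H^2(2.A_k,\Flbar^\times) \to \Hom(H_2(2.A_k,\Z),\Flbar^\times) \to 0.
\]
The right-hand term vanishes because, for $k \in \{4,5\}$, the binary group $2.A_k$ coincides with the Schur cover of $A_k$ (the universal central extension $\SL_2(\F_3) \to A_4$ or $\SL_2(\F_5) \to A_5$), and therefore has trivial Schur multiplier $H_2(2.A_k,\Z) = 0$. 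The left-hand term vanishes because $\Flbar^\times$ is a divisible (hence injective) abelian group in any characteristic, so $\Ext(-,\Flbar^\times) = 0$. Thus $H^2(2.A_k,\Flbar^\times) = 0$, the obstruction vanishes, and the desired $\widetilde{\phi}$ exists.

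The main input is the vanishing of $H^2(2.A_k,\Flbar^\times)$; granted this, the sublemma follows immediately from the observation of the first paragraph. The one point that might appear worrisome in small characteristic --- namely the absence of nontrivial $l$-th roots of unity in $\Flbar^\times$ --- does not actually enter, since $\Flbar^\times$ remains divisible as an abstract abelian group even in characteristic $l$ (the Frobenius provides unique $l$-th roots). The argument is therefore uniform in $l$ and requires no case distinction, and in particular no faithfulness assertion about $\widetilde{\phi}$ is needed: it may well factor through the quotient $2.A_k \twoheadrightarrow A_k$, but the image modulo scalars is $\bar G$ in either event.
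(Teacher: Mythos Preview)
Your argument is correct and takes a cleaner route than the paper. The paper invokes the classical theory of $l$-representation groups: it shows that $2.A_k$ is an $l$-representation group of $A_k$ (using that the Schur multiplier of $A_k$ is $\Z/2$, hence equal to its prime-to-$l$ part for odd $l$), then cites the general lifting property of such groups. You instead compute directly that $H^2(2.A_k,\Flbar^\times)=0$ via universal coefficients, using divisibility of $\Flbar^\times$ and vanishing of $H_2(2.A_k,\Z)$. Your approach is more self-contained and avoids the somewhat obscure references the paper needs; the paper's approach has the advantage of making explicit the conceptual reason (the obstruction class on $A_k$ dies when pulled back to its representation group), though both are really the same phenomenon viewed differently.

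One imprecision worth tightening: your justification that $H_2(2.A_k,\Z)=0$ ``because $2.A_k$ is the Schur cover of $A_k$'' is only a valid implication for perfect groups. The group $A_5$ is perfect and $2.A_5=\SL_2(\F_5)$ is its universal central extension, so the conclusion follows. But $A_4$ is not perfect (it has a quotient of order $3$), so there is no universal central extension, and Schur covers of non-perfect groups need not have trivial Schur multiplier (e.g.\ $D_8$ is a Schur cover of $\Z/2\times\Z/2$ with $H_2(D_8,\Z)=\Z/2$). The fact $H_2(\SL_2(\F_3),\Z)=0$ is nonetheless true; you can verify it directly by noting that the Sylow subgroups of $\SL_2(\F_3)$ are $Q_8$ and $\Z/3$, both of which have trivial Schur multiplier, and the restriction to a Sylow $p$-subgroup is injective on the $p$-part of $H_2$. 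Also, the opening remark about square roots of the determinant is a red herring: the kernel of $\GL_2(\Flbar)\to\PGL_2(\Flbar)$ is always the scalars, with no hypothesis on $l$ needed.
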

\begin{proof}
Before we can begin the proof proper, we must 
recall some general facts from the theory of projective
modular representations of finite groups. Given any finite group $H$ and prime $l$, 
we call a group $\widetilde{H}$ an $l$-\emph{representation group}
of $H$, if \emph{(a)} $\widetilde{H}$ has a central subgroup $A$ contained in the 
commutator subgroup $\widetilde{H}'$ of $\widetilde{H}$, 
\emph{(b)} $\widetilde{H}/A\cong H$ and \emph{(c)}
$A\cong H^2(H,\Fbar_l^\times)$. 
We have the following facts.
\emph{(1.)} There always exists such a group (not necessarily unique).
\emph{(2.)} Given any such group $\widetilde{H}$, 
and given any homomorphism $\phi: H\to\PGL_n(\Fbar_l)$, there is a homomorphism $\widetilde{\phi}:\widetilde{H}\to\GL_n(\Fbar_l)$ such that the maps
$\widetilde{H} \onto H \to \PGL_n(\Fbar_l)$ and $\widetilde{H} \to \GL_n(\Fbar_l) \onto \PGL_n(\Fbar_l)$
agree. \emph{(3.)} Finally, the group $H^2(H,\Fbar_l^\times)$ is just
the prime-to-$l$ part of 
$H^2(H,\Qbar^\times)$, the \emph{Schur multiplier} of $H$.
[The original reference for these three facts is \cite{asano-osima-takahasi}, although the first
two have older proofs in characteristic 0 which essentially go over unchanged to 
characteristic $l$. The authors found a more accessible `reference' for the first (resp second) 
of these facts was to read the proof of Theorem 1.2 (resp 1.3) of \cite{hoffman1992projective},
which proves these results in characteristic 0, and observe that the proof goes through
in characteristic $l$. The third fact is \cite[Satz 1]{asano-osima-takahasi}.]

We wish to apply these facts in the case where $H$ is isomorphic to $A_n$ for $n\geq 4$. 
By the last sentence
of chapter 2 of \cite{hoffman1992projective} (on p23, just after
the unnumbered remark after Theorem 2.12) we see the construction of a group,
called there $\widetilde{A}_n$, which is a `representation group' for $A_n$. 
[This means---see the definition at the bottom of \cite[p6]{hoffman1992projective}---a 
group satisfying the properties \emph{(a--c)} of the previous paragraph,
except with $H^2(H,\Qbar^\times)$ replacing $H^2(H,\Fbar_l^\times)$.] 
Given the construction there\footnote{Specifically, a group $\tilde{S}_n$ is 
constructed---see Theorem 2.8 of \cite{hoffman1992projective}---
which is a double cover of $S_n$; $\widetilde{A}_n$ is defined as 
the inverse image of $S_n$ under this map.}, $\widetilde{A}_n$ is a double cover of $A_n$, and 
we conclude that $H^2(A_n,\Qbar^\times)=\Z/2\Z$. But then
$H^2(A_n,\Qbar^\times)\cong H^2(A_n,\Fbar_l^\times)$ (because 
$H^2(A_n,\Qbar^\times)=\Z/2\Z$, $l>2$ and using the fact (3) above) so 
$\widetilde{A}_n$ satisfies properties \emph{(a--c)} of the previous paragraph.
Thus $\widetilde{A}_n$ is in fact also an $l$-representation group of $A_n$, for $l>2$.

Now we begin to the proof proper, and imagine that $\bar{G}$ is, as in the statement
of the sublemma, isomorphic to $A_k$ for $k\in\{4,5\}$. 
By the discussion of the previous paragraph $\widetilde{A}_k$ is an $l$-representation 
group of $\bar G$, and so by fact (2) above applied with $\phi$ the natural
inclusion $\bar G\into\PGL_2(\Fbar_l)$, there is a map 
$\widetilde{\phi}:\widetilde{A}_k\to\GL_2(\Fbar_l)$
such that $\widetilde{A}_k \onto A_k \isoto \bar{G} \into \PGL_2(\Fbar_l)$ and 
$\widetilde{A}_k \overset{\widetilde{\phi}}{\to} \GL_2(\Fbar_l) \onto \PGL_2(\Fbar_l)$
agree, which means that $\Fbar_l^\times \widetilde{\phi}(\widetilde{A}_k) = \Fbar_l^\times G$.

This gives us everything we need, apart from checking this group $\widetilde{A}_k$ 
defined in \cite{hoffman1992projective},
is isomorphic to the group $2.A_k$ as defined in the statement of the sublemma.
To check this, observe $\widetilde{A}_n$ is defined in \cite{hoffman1992projective} 
as a certain subgroup of a certain group $\tilde{S}_n$,
which is given a presentation just before Theorem 2.8 of \emph{loc.~cit.}, on p18.
Comparing this presentation to the discussion in \S2.7.2 of \cite{wilsonfinitesimple},
we see that $\widetilde{A}_n$ is the same group as the group called $2.A_n$ in \cite{wilsonfinitesimple}. Examining the discussion in \S5.6.8 and \S5.6.2
of \cite{wilsonfinitesimple}, we see that the groups that book calls 
$2.A_5$ and $2.A_4$ are indeed respectively the 
binary icosahedral and tetrahedral groups.
\end{proof}

We are now ready to move on to the case analysis that is the proof proper.

\begin{figure}
\begin{tabular}{c|ccc}
		&$l=3$				&$l=5$				&$l\geq 7$\\
\hline
dihedral 	&\checkmark 1			& \checkmark 1		&\checkmark 0 \\
$S_4$	&$-$ 7			& \checkmark 1 	&\checkmark 0 \\
$A_4$	&$-$ 7			& \checkmark 1 	&\checkmark 0 \\
$A_5$	&\checkmark 4			&$-$ 7 		&\checkmark 0 \\
$\PSL_2(k)$, $|k|=l$&$\times$ 6	&$\times$ 5		&\checkmark 0 \\
$\PGL_2(k)$, $|k|=l$&\checkmark 3	&$\times$ 5		&\checkmark 0 \\
$\PSL_2(k)$, $|k|\geq l^2$&\checkmark 2&\checkmark 2	&\checkmark 0 \\
$\PGL_2(k)$, $|k|\geq l^2$&\checkmark 2&\checkmark 2	&\checkmark 0 \\
\end{tabular}
\bigskip

\begin{tabular}{r|p{8cm}}
\checkmark 0 & Always adequate by appeal to Theorem 9 of \cite{jackapp} (see point 0)\\
\checkmark $n$ & Always adequate; see point $n$.\\
$\times$ $n$ & Never adequate;  see point $n$. \\
$-$ $7$ & This case is already included in other cases, and hence needs not 
be considered in its own right. See point 7.
\end{tabular}
\caption{The various cases for the proof of Proposition \ref{prop:adequacy for n=2}
\label{tbl: adequacy cases}}
\end{figure} 

\emph{Point 0.} The majority of cases are handled by an appeal to 
Theorem 9 of \cite{jackapp}. In our present
notation, this asserts \emph{inter alia} 
that if we write $G^0$ for the subgroup of $G$ generated by elements
of $l$-power order and $d$ for the maximal dimension of an irreducible $G^0$-submodule of $\Flbar^2$,
then $G$ is adequate so long as $l\geq
2(d+1)$. 
Since clearly $d\leq 2$, we immediately see that $G$ is automatically adequate 
in any case with $l \geq 7$.   

\emph{Point 1.} Now we consider the case where either
\begin{itemize}
\item $l=5$ and $\bar{G}$ is isomorphic to $S_4$ or $A_4$. 
\item $l=3$ or $l=5$ and $\bar{G}$ is a dihedral group of prime-to-$l$ order
\end{itemize} 
In either of these cases, the projective image of $G$ has order coprime to $l$,
whence $G$ has order coprime to $l$, which is enough by Remark 
\ref{coprime order and adequacy}.

\emph{Point 2.} Next we consider the case where $l=3$ or 5 and the projective image of $G$
is $\PSL_2(k)$ or $\PGL_2(k)$ for some $k$ with $|k|\geq l^2$. 
We claim that $G$ is adequate in this case.

If the projective image of $G$ is $\PSL_2(k)$, then
by applying Lemma \ref{scalars and adequacy} we can replace $G$ with 
$(k)^\times G=k^\times \SL_2(k)$, and by applying Lemma  \ref{scalars and adequacy} again
we can replace $G$ with $\SL_2(k)$. If the projective image of $G$ is $\PGL_2(k)$, then
by a similar argument we can replace $G$ with $\GL_2(k)$ and then by applying 
Lemma \ref{normal subgroups and adequacy} we can again replace $G$ with $\SL_2(k)$.
Thus in either case we may assume that $G=\SL_2(k)$.

Let us verify the conditions for adequacy in turn:
\begin{itemize}
\item We see that $G$ has no non-trivial quotient of $l$ power order since the simplicity of 
$\PSL_2(\F_{3^n})$ and $\PSL_2(\F_{5^n})$ for $n\geq 2$ tells us $G$ in fact has no Jordan H\"older
constituent of $l$-power order.
\item The fact that $l\notdiv n=2$ is trivial.
\item Certainly the elements
  of $\SL_2(k)$ of order prime to $l$ span $M_{2\times 2}(\Flbar)$ as an
  $\Flbar$-vector space 
(one may use the matrices   $\begin{pmatrix}
    1&0\\0&1
  \end{pmatrix}$, $
  \begin{pmatrix}
    \alpha & 0\\ 0 &\alpha^{-1}
  \end{pmatrix}$, $
  \begin{pmatrix}
    0&1\\-1&0
  \end{pmatrix}$, $
  \begin{pmatrix}
    0&\alpha\\-\alpha^{-1}&0
  \end{pmatrix}$ for any $\alpha\in k^\times$, $\alpha\neq \pm 1$).

\item To verify the fourth condition it will suffice to check $H^1(G,\gothsl_n(\Fbar_l))=(0)$.
Since $G=\SL_2(k)$, this is just $H^1(\SL_2(k),\gothsl_n(\Fbar_l))=(0)$, which
follows, under our present assumptions, from Lemma 2.48 of
\cite{ddt}. 
\end{itemize}

\emph{Point 3.} 
We now turn to the case where $l=3$ and $\bar G$ is conjugate to
$\PGL_2(\F_3)$. We claim that $G$ is adequate in this case. Applying
Lemma \ref{scalars and adequacy} twice, we may assume that $G=\GL_2(\F_3)$. Since
$\PGL_2(\F_3)\cong S_4$, we see that $G$ has no quotients of $3$-power
order. Indeed, $S_4$ has 3 subgroups of index 3 and they are all
conjugate, being 2-Sylow subgroups. Thus the first condition for
adequacy holds. The second condition holds trivially. For the third
condition, we note that the elements 
  $\begin{pmatrix}
    1&0\\0&1
  \end{pmatrix}$, $
  \begin{pmatrix}
    0&-1\\1&0
  \end{pmatrix}$, $
  \begin{pmatrix}
    1&1\\1&-1
  \end{pmatrix}$, and $
  \begin{pmatrix}
    -1&1\\1&1
  \end{pmatrix}$ of $\SL_2(\F_3)$ are semi-simple and span $M_{2\times
    2}(\Fbar_3)$ as an $\Fbar_3$-vector space.
To verify the fourth condition, we
  think of $\SL_2(\F_3)$ as a normal subgroup of $\GL_2(\F_3)$ with quotient $Q$
  of order 2, giving us an exact sequence
\[
 H^1(Q,\gothgl_2(\Fbar_3)^{\SL_2(\F_3)}) \lra H^1(\GL_2(\F_3),\gothgl_2(\Fbar_3)) \lra H^1(\SL_2(\F_3), \gothgl_2(\Fbar_3))^{Q}.
\]
The right term vanishes by appeal to Lemma 2.48 of \cite{ddt},
  which tells us that $H^1(\SL_2(\F_3),\gothsl_2(\Fbar_3))$ is trivial. On the other hand
  $\gothgl_2(\Fbar_3)^{\SL_2(\F_3)} =\gothsl_2(\Fbar_3)^{\SL_2(\F_3)}\oplus (\mathbf{1}\Fbar_3)^{\SL_2(\F_3)}=\mathbf{1}\Fbar_3$ (since $\gothsl_2(\Fbar_3)$
  is irreducible and nontrivial under the action of $\SL_2(\F_3)$); and 
  $H^1(Q,\mathbf{1}\Fbar_3)=(0)$. So the left term vanishes too.
    Thus $H^1(\GL_2(\F_3),\gothgl_2(\Fbar_3))=(0)$; that is,  
  $H^1(G,\gothgl_2(\Fbar_3))=(0)$, as required.

\emph{Point 4.} 
We now treat the case where $l=3$ and $\bar G\cong A_5$. We claim $G$ is
adequate in this case. Applying the sublemma 
we can find some irreducible two-dimensional $\Flbar$-representation $\widetilde{\phi}$ of $2.A_5$ such that 
$\Fbar_l^\times \widetilde{\phi}(2.A_5) = \Fbar_l^\times G$.
Having done this, by applying Lemma \ref{scalars and adequacy} twice, we see
that to show $G$ adequate it suffices to show $\widetilde{\phi}(2.A_5)$ adequate.
By consulting \cite[p2]{jansen1995atlas}, we see that $2.A_5$ has only
two $2$-dimensional irreducible
mod 3 representations, corresponding to the Brauer characters $\phi_5$ and $\phi_6$ 
there.
By comparing with \cite[p2]{conway-atlas} we see
that these Brauer characters each come from characteristic 0 characters, viz
the characters called $\chi_6$ and $\chi_7$ in \cite[p2]{conway-atlas},
the first of which corresponds to $\rho_{\text{nat},2.A_5}$, the natural representation we get
by thinking of $2.A_5$ as the binary icosahedral group, and the second to 
$\rho^{(12)}_{\text{nat},2.A_5}$. This means that $\widetilde{\phi}$ is either the reduction mod 3
of $\rho_{\text{nat},2.A_5}$ or of $\rho^{(12)}_{\text{nat},2.A_5}$. We will write
$\bar\rho_{\text{nat},2.A_5}$ and $\bar\rho^{(12)}_{\text{nat},2.A_5}$ for these reductions.
 
 We shall now verify that $\widetilde{\phi}(2.A_5)$ is adequate, verifying the conditions in turn.
 \begin{itemize}
 \item The first condition (no $l$-power order quotients) follows immediately from the
 simplicity of $A_5$, which shows $\widetilde{\phi}(2.A_5)$ can have no $l$-power order 
 Jordan H\"older constituents.
 \item The second condition, $l\ndiv n$, is trivial.
 \item Examining \cite[p2]{jansen1995atlas}, we see that 
 the character $\phi_5$ is real, so the dual representation of 
$\bar\rho_{\text{nat},2.A_5}$ has the same character
and $\ad^0\bar\rho_{\text{nat},2.A_5}$ has character $\phi_5^2-1$.
We recognize this character as $\phi_2$ from the table. Thus $\ad^0\bar\rho_{\text{nat}}$
is irreducible. Similarly $\ad^0\bar\rho_{\text{nat},2.A_5}^{(12)}$ has character $\phi_3$, which
is irreducible. It follows that $\ad^0V = \gothsl_2 \Flbar$ is irreducible. 
Choose $g\in \widetilde{\phi}(2.A_5)$ to be the image under $\widetilde{\phi}$ of some
non-central element of $2.A_5$ of order prime to 3. 
Then $g$ is not a scalar and acts semisimply, so 
is conjugate to $\diag(\alpha,\beta)$ where $\alpha\neq\beta$. Then it is easy to check that
$\pi_{g,\alpha} \gothsl_2 (\Flbar) \iota_{h,\alpha}\neq (0)$. Thus we see
that condition (C) of \cite{jackapp} holds, which is equivalent to the third condition for
adequacy by Lemma 1 of \cite{jackapp}. 
\item To verify the fourth condition it will suffice to check
  $H^1(\widetilde{\phi}(2.A_5),\gothsl_2(\Fbar_l))=(0)$.  Recall that
  $\widetilde{\phi}$ is $\bar\rho_{\text{nat},2.A_5}$ or
  $\bar\rho^{(12)}_{\text{nat},2.A_5}$, both of which are easily seen
  to be injective. Thus we must show $H^1(2.A_5,\ad^0
  \widetilde{\phi})=(0)$ for
  $\widetilde{\phi}=\bar{\rho}_{\text{nat},2.A_5}$ and
  $\widetilde{\phi}=\bar{\rho}^{(12)}_{\text{nat},2.A_5}$. We give the
  argument for $\widetilde{\phi}=\bar{\rho}_{\text{nat},2.A_5}$, the
  other case being entirely analogous. It is easy to see that
  $\ad^0\bar{\rho}_{\text{nat},2.A_5}$ is the natural 3D
  representation $\bar{\rho}_{3,2.A_5}$ we get by mapping to $A_5$,
  realizing $A_5$ as the symmetries of a icosahedron, then reducing
  mod 3. By Proposition 46 in Section 16.4 of \cite{MR0450380}, we see
  that   $\ad^0\bar{\rho}_{\text{nat},2.A_5}$ is a projective
  $\Fbar_3[2.A_5]$-module, so it is the only simple module in its
  block, and in particular any extension of the trivial representation
  by  $\ad^0\bar{\rho}_{\text{nat},2.A_5}$ splits, as required. (We
  thank Florian Herzig for supplying us with this argument.)

\end{itemize}

\emph{Point 5.} Next we consider the case where $l=5$ and $\bar G$
is $\PSL_2(\F_5)$ or $\PGL_2(\F_5)$. $G$ is adequate in neither case.
In the case where $\bar{G}$ is $\PSL_2(\F_5)$,
Table 4.5 of \cite{cps} tells us that $H^1(G,\gothgl_2(\barFF_l))$
is
one dimensional, violating the fourth condition in the definition of adequacy. Thus in this case $G$ 
will fail to be adequate.  The case where $\bar{G}$ is $\PGL_2(\F_5)$ 
will then also have $H^1(G,\gothgl_n(\barFF_l))\neq(0)$ by \cite[2.3 (g)]{cps}, and again
$G$ will fail to be adequate.

\emph{Point 6.} Next we consider the case where $l=3$ and $\bar G$
is conjugate to $\PSL_2(\F_3)$.We claim that $G$ is not adequate in this case.
Since
$\PSL_2(\F_3)\cong A_4$, it suffices to
note that $A_4$ has a quotient of order $3$, so that
$G$ must also have a quotient of order $3$.
This violates the first condition for adequacy.

\emph{Point 7.} We now treat the remaining cases. We start with the case where
where $l=5$ and
$\bar G\cong A_5$.
It is obvious that this case \emph{includes} the case already 
considered where we have (up to conjugation) an equality 
$\bar G = \PSL_2(\F_5)$ 
(rather than a mere isomorphism), since $A_5\cong \PSL_2(\F_5)$.
But we will show that in fact whenever 
$\bar G\cong A_5$ we must indeed have $\bar G = \PSL_2(\F_5)$ up to conjugation, 
thus reducing this case
to a case we have already considered.

Applying the sublemma 
we can find some irreducible mod 5 representation $\widetilde{\phi}$ of $2.A_5$ such that 
$\Fbar_l^\times \widetilde{\phi}(2.A_5) = \Fbar_l^\times G$.
Having done this, by applying Lemma \ref{scalars and adequacy} twice, we see
that to show $G$ inadequate it suffices to show $\widetilde{\phi}(2.A_5)$ inadequate.
By consulting \cite[p2]{jansen1995atlas}, we see that $2.A_5$ has only one mod 5
Brauer character of dimension 2. But $2.A_5\isoto \SL_2(\F_5)\into\GL_2(\F_5)$
(see \cite[p2]{conway-atlas}) is clearly an irreducible representation mod 5 of
dimension 2, so we deduce that $\widetilde{\phi}$ must be exactly this map. This reduces us to the
case $\bar G=\PSL_2(\F_5)$.

Similar arguments allow us to see that 
the (apparently more general) 
case where $l=3$ and $\bar G\cong A_4$ is actually included in 
the case that
$\bar G$ is conjugate to $\PSL_2(\F_3)$. 

Finally, again using similar arguments, we can 
reduce the case where $l=3$ and $\bar G\cong S_4$ to the case where 
$\bar G$ is conjugate to $\PGL_2(\F_3)$.

\end{proof}

\subsection{Adequacy for tensor products}We would like to thank
Richard Taylor for allowing us to include the following lemma here; it
was originally proved by him during the writing of \cite{BLGGT}.

\begin{lem}\label{lem:adequacy for tensor products} Suppose that $\Gamma$ is a group and that $r_i:\Gamma \ra \GL_{n_i}(\barFF_l)$ is a representation of $\Gamma$ for $i=1,2$. Suppose moreover that $r_1(\Gamma)$ is adequate, that $r_2|_{\ker r_1}$ is irreducible and that $r_2(\Gamma)$ has order prime to $l$. Then $(r_1 \otimes r_2)(\Gamma)$ is adequate. \end{lem}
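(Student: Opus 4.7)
The plan is to verify the four conditions of adequacy for $H := (r_1\otimes r_2)(\Gamma)$ directly, with the restriction $\phi(N) := (r_1\otimes r_2)(\ker r_1)$ playing a central role; note that this restriction is isomorphic to $r_2(\ker r_1)$ and sits as $1\otimes r_2(N)$ inside $H$ via $\gamma \mapsto 1\otimes r_2(\gamma)$.

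For condition (2), $l\nmid n_1$ is part of the adequacy of $r_1(\Gamma)$; and since $r_2|_{\ker r_1}$ is irreducible, $r_2$ itself is irreducible on $\Gamma$, and $r_2(\Gamma)$ has order prime to $l$, so $r_2$ lifts to characteristic zero and $n_2 \mid |r_2(\Gamma)|$, giving $l\nmid n_2$. Condition (1) is handled by pulling back any homomorphism $\chi : H\to\overline{\F}_l$ along $r_1\otimes r_2$ to $\Gamma$: on $\ker r_1$ the composite factors through $r_2(\ker r_1)$, which has order prime to $l$, so the composite is trivial on $\ker r_1$ and hence factors through $r_1(\Gamma)$, which has no nontrivial $l$-power quotient by hypothesis.

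For condition (3), let $S\subset r_1(\Gamma)$ be the set of prime-to-$l$-order elements; by adequacy of $r_1(\Gamma)$, $S$ spans $M_{n_1\times n_1}(\overline{\F}_l)$. For each $g\in S$, choose a lift $\gamma_g\in\Gamma$; then every element $r_1(\gamma_g n)\otimes r_2(\gamma_g n) = g\otimes r_2(\gamma_g)r_2(n)$ (as $n$ ranges over $\ker r_1$) has order prime to $l$, because both tensor factors do ($g$ by construction and $r_2(\gamma_g n)$ because $r_2(\Gamma)$ has order prime to $l$). As $n$ varies, $r_2(\gamma_g)r_2(n)$ ranges over the coset $r_2(\gamma_g)r_2(\ker r_1)$; since $r_2|_{\ker r_1}$ is irreducible, Burnside gives that $r_2(\ker r_1)$ spans $M_{n_2\times n_2}(\overline{\F}_l)$, so this coset also spans $M_{n_2\times n_2}(\overline{\F}_l)$. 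Therefore these prime-to-$l$-order elements of $H$ span $\overline{\F}_l\langle S\rangle\otimes M_{n_2\times n_2}(\overline{\F}_l) = M_{n_1n_2\times n_1n_2}(\overline{\F}_l)$.

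The most interesting step is condition (4), which I would attack using the inflation–restriction sequence for the normal subgroup $\phi(N)\trianglelefteq H$ with $V = \mathfrak{gl}_{n_1n_2}(\overline{\F}_l) = \mathfrak{gl}_{n_1}(\overline{\F}_l)\otimes\mathfrak{gl}_{n_2}(\overline{\F}_l)$. Since $\phi(N)\cong r_2(\ker r_1)$ has order prime to $l$, $H^1(\phi(N),V)=0$, killing the restriction term. By Schur's lemma applied to the irreducible $\phi(N)$-representation on $\overline{\F}_l^{n_2}$, $V^{\phi(N)} = \mathfrak{gl}_{n_1}(\overline{\F}_l)\otimes\overline{\F}_l\cdot I_{n_2} \cong\mathfrak{gl}_{n_1}(\overline{\F}_l)$, with the induced $H/\phi(N)$-action factoring through $\mathrm{Ad}\circ r_1$. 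The quotient $H/\phi(N)$ is naturally a quotient of $r_1(\Gamma)$ (because $\ker r_1 \subseteq (r_1\otimes r_2)^{-1}(\phi(N))$), so by inflation $H^1(H/\phi(N),\mathfrak{gl}_{n_1})\hookrightarrow H^1(r_1(\Gamma),\mathfrak{gl}_{n_1})$, and the latter vanishes by adequacy of $r_1(\Gamma)$. Putting these together via inflation–restriction gives $H^1(H,V)=0$, completing the verification. The main technical subtlety is the identification of $V^{\phi(N)}$ and the verification that the action of $H/\phi(N)$ on it factors through the $r_1$-image, so that the adequacy of $r_1(\Gamma)$ can be invoked directly.
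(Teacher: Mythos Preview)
Your proof is correct and follows essentially the same strategy as the paper's: both arguments use the normal subgroup $(r_1\otimes r_2)(\ker r_1)\cong r_2(\ker r_1)$ (which the paper calls $K_2$) and Hochschild--Serre/inflation--restriction for condition (4), and both verify condition (3) by combining the spanning property of prime-to-$l$ elements in $r_1(\Gamma)$ with Burnside for $r_2|_{\ker r_1}$. The only cosmetic difference is that the paper identifies $H/K_2\cong H_1/Z$ via an explicit exact sequence (where $Z$ consists of scalars $z$ with $r_1(\gamma)=z$, $r_2(\gamma)=z^{-1}$), whereas you bypass this by noting directly that $r_1(\Gamma)\twoheadrightarrow H/\phi(N)$ and invoking inflation; both routes land on $H^1(r_1(\Gamma),\mathfrak{gl}_{n_1})=0$.
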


\begin{proof} Write $H_i$ for the image of $r_i$ and $H$ for the image
  of $r_1 \otimes r_2$. Write $K_i$ for $r_i(\ker r_{3-i})$. Write $Z$
  for the set of $z \in \barFF_l^\times$ for which there exists
  $\gamma \in \Gamma$ with $r_1(\gamma)=z$ and
  $r_2(\gamma)=z^{-1}$. Then there is a natural identification
\[ H_1/K_1=\Gamma/(\ker r_1).(\ker r_2)=H_2/K_2 \]
and an exact sequence
\[ \{ 1\} \lra Z \lra \{ (h_1,h_2) \in H_1 \times H_2: \,\, h_1 \bmod K_1 = h_2 \bmod K_2\} \lra H  \lra \{ 1\}. \]
In particular there is an exact sequence
\[ \{ 1\} \lra Z \lra H_1 \lra H/K_2  \lra \{ 1\}. \]

It is easy to check the first two conditions for $H$ to be
adequate. (Note that $\dim r_2 | \# H_2$, so that $l\nmid\dim r_2$,
and that any $l$-power order quotient of $H$ would yield an $l$-power
order quotient of $H/K_2\cong H_1/Z$ and thus of $H_1$, a
contradiction.) 
To check the third
condition, suppose that $A_i \in M_{n_i \times n_i}(\barFF_l)$. We can
write
\[ A_1 = \sum_i a_i r_1(\gamma_i) \]
for some $a_i \in \barFF_l$ and $\gamma_i \in \Gamma$ with $r_1(\gamma_i)$ semi-simple. We can also write
\[ r_2(\gamma_i^{-1})A_2 = \sum_j b_{ij} r_2(\delta_{ij}) \]
for some $b_{ij} \in \barFF_l$ and some $\delta_{ij} \in \ker r_1$. Then
\[ \begin{array}{rl} &\sum_{i,j} a_i b_{ij} (r_1\otimes r_2)(\gamma_i\delta_{ij}) \\ = & \sum_i a_i r_1(\gamma_i) \otimes (r_2(\gamma_i) \sum_j b_{ij} r_2(\delta_{ij})) \\ = & \sum_i a_i r_1(\gamma_i) \otimes A_2 \\ = & A_1 \otimes A_2. \end{array} \]
Moreover each $r_1(\gamma_i\delta_{ij})=r_1(\gamma_i)$ is semi-simple by assumption and each $r_2(\gamma_i\delta_{ij})$ is semi-simple as $H_2$ has order prime to $l$. Thus $H$ satisfies the third condition to be adequate.

To check the fourth condition it suffices by the Hochschild-Serre
spectral sequence to check that $H^1(H/K_2, \ad (r_1 \otimes r_2)^{K_2})=(0)$ and
$H^1(K_2,\ad(r_1 \otimes r_2))^H=(0)$. However
\[ H^1(H/K_2, \ad (r_1 \otimes r_2)^{K_2})=H^1(H/K_2, \ad
r_1)=H^1(H_1/Z,\ad r_1)=H^1(H_1, \ad r_1)=(0) \]
and
\[ H^1(K_2,\ad(r_1 \otimes r_2))^H= ((\ad r_1) \otimes H^1(K_2, \ad
r_2))^H=(0) \](since $K_2$ has order prime to $l$).
The lemma follows.
\end{proof}

\subsection{An improvement to a lifting result of \cite{BLGGT}}We now
prove a slight variant of Theorem 4.3.1 of \cite{BLGGT}. At the
expense of assuming that the representation $\rbar$ admits a
potentially automorphic lift, we are able to weaken the assumption on
the prime $l$.
We will follow the proof of Theorem 4.3.1 of
\cite{BLGGT}, and in particular we refer to \cite{BLGGT}
for any notation not already defined in the present paper.

\begin{thm}\label{diaglift} Let $n$ be a positive integer and $l$ an odd  prime. Suppose that $F$ is a CM field not containing $\zeta_l$ and with maximal totally real subfield $F^+$.
  Let $S$ be a finite set of finite places of $F^+$ which split in $F$
  and suppose that $S$ includes all places above $l$. For each $v \in
  S$ choose a prime $\tv$ of $F$ above $v$.

Let $\mu:G_{F^+} \ra \barQQ_l^\times$ be a continuous, totally odd, de
Rham character unramified outside $S$. Also let
\[ \barr: G_{F^+} \lra \CG_n(\barFF_l) \] be a continuous
representation unramified outside $S$ with $\nu \circ \barr = \barmu$
and $\barr^{-1}\CG^0_n(\barFF_l)=G_F$.  Suppose that
$\breve{\barr}|_{G_{F(\zeta_l)}}$ is irreducible, and that
$\breve{\rbar}(G_{F(\zeta_l)})$ is adequate.

For $v \in S$, let $\rho_v:G_{F_\tv} \ra \GL_n(\CO_{\barQQ_l})$ denote
a lift of $\breve{\barr}|_{G_{F_\tv}}$. If $v|l$ we assume that
$\rho_v$ is potentially diagonalizable and that, for all
$\tau:F_\tv\into\Qlbar$, the multiset $\HT_\tau(\rho_v)$ consists of
$n$ distinct integers.

Assume further that there is a finite extension of CM fields $F'/F$ and
a RAECSDC automorphic representation $(\pi',\chi')$ of $\GL_n(\A_{F'})$ such
that
\begin{itemize}
\item $F'$ does not contain $\zeta_l$,
\item $(\pi',\chi')$ is unramified outside the set of primes above $S$,
\item $(\rbar_{l,\imath}(\pi'),\rbar_{l,\imath}(\chibar'))\cong
  (\rbar|_{G_{F'}},\mubar|_{G_{F'}})$, 
\item for all places $w|l$ of $F'$, $r_{l,\imath}(\pi')|_{G_{F'_w}}$
  is potentially diagonalizable, and
\item $\breve{\rbar}(G_{F'(\zeta_l)})$ is adequate.
\end{itemize}

Then there
is a lift
\[ r:G_{F^+} \lra \CG_n(\CO_{\barQQ_l}) \]
of $\barr$ such that
\begin{enumerate}
\item $\nu \circ r = \mu$;
\item if $v \in S$ then $\breve{r}|_{G_{F_\tv}} \sim \rho_v$;
\item $r$ is unramified outside $S$;
\item $r|_{G_{F'}}$ is automorphic of level potentially prime to $l$.
\end{enumerate} \end{thm}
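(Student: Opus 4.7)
The plan is to follow the strategy of the proof of Theorem 4.3.1 of \cite{BLGGT}, but to use the hypothesized potentially automorphic lift $(\pi',\chi')$ in place of the potential automorphy step, which is what allowed the restrictive assumption on $l$ in \emph{loc.\ cit.} First I would enlarge $F'/F$ by a further solvable CM extension (disjoint from $\bar F^{\ker \bar r}(\zeta_l)$) to arrange that $F'/F'{}^+$ is unramified at all finite places, that every place of $F'{}^+$ above $l$ splits in $F'$, that $[F'{}^+:\Q]$ is even, and that $\pi'$ and all the chosen $\rho_v|_{G_{F'_{w}}}$ are in the ``nice'' local situation needed to apply the automorphy lifting theorems of \cite{BLGGT} (split ramification, unramified outside the places above $S$, crystalline of the appropriate Hodge--Tate type at $l$). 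Adequacy of $\breve{\bar r}(G_{F'(\zeta_l)})$ is already assumed, so the Taylor--Wiles hypothesis is available over $F'$.

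Next I would set up a global Galois deformation problem $\CS$ for $\bar r$ in the sense of \cite{cht}/\cite{BLGGT}, with multiplier $\mu$, ramification allowed only at $S$, and, at each $v\in S$, local deformation condition given by the irreducible component of $\Spec R_{\bar r|_{G_{F_{\tv}}},\cO}^{\Box}[1/l]$ on which $\rho_v$ lies. For $v|l$ these are components of the crystalline framed deformation ring with the prescribed Hodge--Tate weights, and by hypothesis they contain potentially diagonalizable points. The corresponding universal deformation ring $R^{\univ}_{\CS}$ is then finite over $\cO$. The key step is to show that every $\Qlbar$-valued point of $R^{\univ}_{\CS}$ is automorphic (of level potentially prime to $l$ when restricted to $F'$). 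To do this I would apply the automorphy lifting theorem of \cite{BLGGT} (Theorem 2.3.1 of \emph{loc.\ cit.}, in its ``potentially diagonalizable'' form) over $F'$: the hypothesis that $r_{l,\imath}(\pi')|_{G_{F'_w}}$ is potentially diagonalizable for all $w|l$, combined with the fact that $\bar r|_{G_{F'}}$ is automorphic with adequate image over $F'(\zeta_l)$, gives that any lift $r:G_{F^+}\to\CG_n(\CO_{\Qlbar})$ in $\CS$ is such that $r|_{G_{F'}}$ is automorphic of level potentially prime to $l$, once one knows the component of the local lifting ring at each $w|l$ is connected to a diagonalizable point, which is automatic for points of our deformation problem.

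It then remains to produce a characteristic zero point of $R^{\univ}_{\CS}$ lying on the prescribed local components at each $v\in S$. For this I would use the standard ``change of component'' argument from \cite{blggord}/\cite{BLGGT}: since $\pi'$ produces such a point at some (possibly different) component at each place above $l$, and since potentially diagonalizable representations at distinct components of the crystalline lifting ring with the same Hodge--Tate weights are connected in the sense recalled in Section \ref{sec:A lifting theorem}, one transfers automorphy between components. Concretely, I would invoke the lifting argument of Theorem 4.3.1 of \cite{BLGGT} verbatim, simply replacing the appeal to potential automorphy with our direct use of $(\pi',\chi')$; the solvable descent from $F'$ to $F^+$ (which is applicable because $\breve{\bar r}|_{G_{F(\zeta_l)}}$ remains irreducible with adequate image) then yields a global lift $r$ satisfying (1)--(4).

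The main obstacle I expect is the second step: arranging the deformation problem so that, at each $v\in S$ with $v|l$, the specified local component containing $\rho_v$ actually meets the component carrying the automorphic lift coming from $\pi'$ (after base change to $F'$). This is exactly the use of potential diagonalizability of both $\rho_v$ and $r_{l,\imath}(\pi')|_{G_{F'_w}}$, via the connectedness property $\sim$, and is where the added hypothesis of the theorem does genuine work compared with Theorem 4.3.1 of \cite{BLGGT}. Once this matching is in place, the rest of the argument is essentially formal given the results of \cite{BLGGT}.
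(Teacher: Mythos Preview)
Your plan has the right overall shape but contains a genuine gap at the heart of the argument. You assert that ``the corresponding universal deformation ring $R^{\univ}_{\CS}$ is then finite over $\cO$'' and that Theorem 2.3.1 of \cite{BLGGT} applied over $F'$ shows every $\Qlbar$-point is automorphic. Neither of these follows directly from your setup. The automorphic input $(\pi',\chi')$ lives over $F'$, not $F^+$, so it does not give a point of $R^{\univ}_{\CS}$; and more seriously, the Hodge--Tate weights of $r_{l,\imath}(\pi')|_{G_{F'_w}}$ need have nothing to do with those of $\rho_v|_{G_{F'_w}}$. Your sentence ``potentially diagonalizable representations at distinct components of the crystalline lifting ring \emph{with the same Hodge--Tate weights} are connected'' quietly assumes away the main difficulty: the weights are not the same, so $\pi'$ does not witness automorphy for the deformation problem $\CS'$ over $F'$ obtained by restricting $\CS$. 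Consequently neither Theorem 2.2.2 (finiteness) nor Theorem 2.3.1 (automorphy) of \cite{BLGGT} applies to $\CS$ or $\CS'$ as stated.

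The paper resolves this via the Harris tensor product trick, and in fact has to run it twice. The point is that in \cite{BLGGT} the input replacing your $(\pi',\chi')$ is the output of Proposition 3.3.1 of \emph{loc.\ cit.}, which does more than establish potential automorphy: it produces an \emph{ordinary} automorphic lift over some $F_1$ with prescribed local behaviour at all $v\nmid l$. So the paper first passes to a solvable $F_1/F'$, constructs a degree-$n$ cyclic CM extension $M/F_1$ and characters $\theta,\theta',\theta''$ of $G_M$, and tensors $r_{l,\imath}(\pi')$ with $\Ind_{G_M}^{G_{F_1}}\theta$ to land in an auxiliary deformation problem $\CS_2$ over $F_2$ to which Theorem 2.2.2 applies; this yields finiteness of an ordinary deformation ring $R^{\univ}_{\CS_1}$ over $F_1$, hence a $\Qlbar$-point $r_1$, which is then shown to be ordinary automorphic. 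The \cite{gg}-type results (Theorems 2.3.1, 2.3.2 of \cite{BLGGT}) then let one replace $r_1$ by an ordinary automorphic $\pi_1'$ with $r_{l,\imath}(\pi_1')|_{G_{F_{1,\tu}}}\sim \rho_v|_{G_{F_{1,\tu}}}$ for $u\nmid l$. Only now, with $\pi_1'$ in hand, does the paper run the tensor trick a second time (using $\theta'$) to prove finiteness of $R^{\univ}_{\CS}$ itself and extract the desired lift $r$. Your ``change of component'' step is not a minor bookkeeping matter; it is this entire two-pass construction, and your plan does not contain it.
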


\begin{proof}
We begin the proof with some brief remarks that may help to orient the
reader. In comparison to Theorem 4.3.1 of \cite{BLGGT}, we have
weakened the hypothesis that $l\ge 2(d+1)$, where $d$ is the maximal
dimension of an irreducible subrepresentation for the subgroup of
$\rbar(G_{F(\zeta_l)})$ generated by elements of order $l$, to
the hypothesis that $\rbar(G_{F(\zeta_l)})$ is adequate (this condition is implied
by the assumption that $l\ge 2(d+1)$ by Theorem 9 of
\cite{jackapp}). On the other hand, we have had to add the hypothesis
that $\rbar|_{G_{F'}}$ is automorphic. In the proof of Theorem 4.3.1
of \cite{BLGGT}, an appeal is made to Proposition 3.3.1 of
\emph{op. cit.}, which proves that $\rbar$ is potentially
automorphic. We do not know whether Proposition 3.3.1 can be proved
using only the condition that $\rbar(G_{F(\zeta_l)})$ is adequate,
rather than the condition that $l\ge 2(d+1)$; the difficulty lies in
establishing when the induction of an adequate representation is
adequate. 

The proof below is essentially a combination of the proofs of Theorem
4.3.1 and Proposition 3.3.1 of \cite{BLGGT}. The reason that we need
to incorporate details of the proof of Proposition 3.3.1 of
\cite{BLGGT} is that in addition to proving the potential automorphy
of $\rbar$, the Proposition also shows that $\rbar$ potentially admits
an ordinary automorphic lift with prescribed behaviour at places not
dividing $l$. In order to carry out the rest of the proof of Theorem
4.3.1 of \cite{BLGGT} in our setting, we need to produce such a lift
of $\rbar|_{G_{F'}}$, possibly after making a further solvable base
change. We can do this using the techniques of \cite{BLGGT}.

In outline, we do the following: we choose a solvable CM extension
$F_1/F'$ with various helpful local properties. We then use the
methods of \cite{BLGGT} to produce an ordinary automorphic lift $r_1$ of
$\rbar|_{G_{F_1}}$. The arguments of \cite{gg}, as refined in
\cite{jack} and \cite{BLGGT}, allow us to replace this with an
ordinary automorphic lift $r_{l,\imath}(\pi'_1)$ which has the
behaviour prescribed for $r$ at places not dividing $l$. The
techniques of \cite{BLGGT} then allow us to produce the representation
$r$, and the automorphicity of $r|_{G_{F'}}$ follows as a byproduct of
the construction.

We now begin the proof proper. We may suppose that for $v \in S$ with $v \ndiv l$ the
  representation $\rho_v$ is robustly smooth (see Lemma 1.3.2 of \cite{BLGGT}) and
  hence lies on a unique component $\CC_v$ of
  $R^\Box_{\breve{\barr}|_{G_{F_\tv}}} \otimes \barQQ_l$.  If $v|l$ is
  a place of $F^+$ then choose a finite extension $K_v/F_\tv$ over
  which $\rho_v$ becomes crystalline, and let $\CC_v$ denote the
  unique component of $R^\Box_{\breve{\barr}|_{G_{F_\tv}}, \{ \HT_\tau(\rho_v)
    \}, K_v-\cris} \otimes \barQQ_l$ on which $\rho_v$ lies.

Let $\tmu$ denote the Teichmuller lift of $\barmu$. Choose a
  positive integer $m$ which is greater than one plus the difference
  of every two Hodge-Tate numbers of $\rho_v$ and of
  $r_{l,\imath}(\pi')|_{G_{F'_w}}$ for every place $v|l$ of $F$ and
  every place $w|l$ of $F'$.
 
  Choose a finite, soluble, Galois, CM extension $F_1/F'$ which is
  linearly disjoint from $\barF^{\ker \barr|_{G_{F'}}}(\zeta_l)$ over
  $F'$ such that
\begin{itemize}
\item for all $u$ lying above $S$ we have $\barr(G_{F_{1,u}})=\{ 1\}$;
\item for all $u|l$ we have $\zeta_l \in F_{1,u}$;
\item $\mu|_{G_{F_1^+}}$ is crystalline above $l$;
\item $r_{l,\imath}(\chi')|_{G_{F_1^+}}$ is crystalline above $l$;
\item if $u|\tv|l$ with $v \in S$ then $\rho_v|_{G_{F_{1,u}}}$ is
  crystalline and $\rho_v|_{G_{F_{1,u}}} \sim \psi_{1}^{(u)} \oplus
  \dots \oplus \psi_{n}^{(u)}$ with each $\psi_{i}^{(u)}$ a
  crystalline character;
\item if $u|l$  then $r_{l,\imath}(\pi')|_{G_{F_{1,u}}}$ is
  crystalline and $r_{l,\imath}(\pi')|_{G_{F_{1,u}}} \sim \phi_{1}^{(u)} \oplus
  \dots \oplus \phi_{n}^{(u)}$ with each $\phi_{i}^{(u)}$ a
  crystalline character.
\end{itemize}
We can and do assume that
$(\phi_{i}^{(cu)})^c\phi_{i}^{(u)}=r_{l,\imath}(\chi')\epsilon^{1-n}|_{G_{F_1,u}}$. If
$u|\tv|l$ with $v \in S$, then for $i=1,\ldots,n$, we define
$\psi_{i}^{(cu)} : G_{F_{1,cu}}\ra \Qlbar^\times$ by
$(\psi_{i}^{(cu)})^c\psi_{i}^{(u)}=\mu|_{G_{F_1,u}}$.

Choose a CM extension $M/F_1$ such that
\begin{itemize}
\item $M/F_1$ is cyclic of  degree $n$;
\item $M$ is linearly disjoint from $\barF^{\ker
    \barr|_{G_{F'}}}(\zeta_l)$ over $F'$;
\item and all primes of $F_1$ above $l$ split completely in $M$.
\end{itemize}
Choose a prime $u_q$ of $F_1$ above a rational prime $q$ such that
\begin{itemize}
\item $q \neq l$ and $q$ splits completely in $M$;
\item $\barr$ is unramified above $q$.
\end{itemize}
If $v|ql$ is a prime of $F_1$ we label the primes of $M$ above $v$ as
$v_{M,1},\dots,v_{M,n}$ so that $(cv)_{M,i}=c(v_{M,i})$. Choose continuous characters
\[ \theta,\theta',\theta'':G_M \lra \barQQ_l^\times \]
such that
\begin{itemize}
\item the reductions $\bartheta$, $\thetabar'$ and $\thetabar''$ are equal;
\item $\theta\theta^c=\tmu \omega_l^{(n-1)m}\epsilon_l^{(1-n)m}$, $\theta'(\theta')^c=\mu$, and
  $\theta''(\theta'')^c=r_{l,\imath}(\chi')\epsilon_l^{1-n}$;
\item $\theta$, $\theta'$ and $\theta''$ are de Rham;
\item if $\tau:M \into \barQQ_l$ lies above a place $v_{M,i}|l$ of $M$
  then $\HT_\tau(\theta)=\{(i-1)m\}$ ,
  $\HT_{\tau}(\theta')=\HT_{\tau|_{F_1}}(\psi_{i}^{(v_{M,i}|_{F_1})})$
  and $\HT_\tau(\theta'')=\HT_{\tau|_{F_1}}(\phi_{i}^{(v_{M,i}|_{F_1})})$;
\item $\theta$, $\theta'$ and $\theta''$ are unramified at $u_{q,M,i}$
  for $i>1$, but $q$ divides $\#\theta(I_{M_{u_{q,M,1}}})$,
  $\#\theta'(I_{M_{u_{q,M,1}}})$ and
$\#\theta''(I_{M_{u_{q,M,1}}})$.
\end{itemize}
(Use Lemma 4.1.6 of \cite{cht}.)

Note the following:
\begin{itemize}
\item If $u|l$ is a place of $F_1$ and if $K/F_{1,u}$ is a finite extension over which $\theta$, $\theta'$ and $\theta''$ become crystalline and $\bartheta=\bartheta'=\thetabar''$ become trivial, then 
\[ (\Ind_{G_M}^{G_{F_1}} \theta)|_{G_{K}} \sim 1 \oplus \epsilon_l^{-m} \oplus \dots\oplus \epsilon_l^{(1-n)m}, \]
\[ (\Ind_{G_M}^{G_{F_1}} \theta')|_{G_{K}} \sim
\psi_{1}^{(u|_{F_1})}|_{G_K} \oplus \dots \oplus
\psi_{n}^{(u|_{F_1})}|_{G_K}, \] and
\[ (\Ind_{G_M}^{G_{F_1}} \theta'')|_{G_{K}} \sim \phi_{1}^{(u|_{F_1})}|_{G_K} \oplus \dots \oplus \phi_{n}^{(u|_{F_1})}|_{G_K}. \]

\item $(\Ind_{G_M}^{G_{F_1}} \theta)^c \cong (\Ind_{G_M}^{G_{F_1}}
  \theta)^\vee \otimes \tmu \omega_l^{(n-1)m}\epsilon_l^{(1-n)m}$,
  $(\Ind_{G_M}^{G_{F_1}} \theta')^c \cong (\Ind_{G_M}^{G_{F_1}}
  \theta')^\vee \otimes \mu$ and $(\Ind_{G_M}^{G_{F_1}} \theta'')^c \cong (\Ind_{G_M}^{G_{F_1}} \theta'')^\vee \otimes r_{l,\imath}(\chi') \epsilon_l^{1-n}$.

\item The representation 
\[ (\Ind_{G_M}^{G_{F_1}} \bartheta )|_{\ker\breve{\rbar}|_{G_{F_1(\zeta_l)}}}\]
is irreducible, and hence by Lemma \ref{lem:adequacy for tensor products}
\[  (\breve{\barr}|_{G_{F_1}} \otimes (\Ind_{G_M}^{G_{F_1}} \bartheta ))(G_{F_1(\zeta_l)}) \]
is adequate.

 [That $(\Ind_{G_M}^{G_{F_1}} \bartheta
 )|_{\ker\breve{\rbar}|_{G_{F_1(\zeta_l)}}}$ is irreducible follows from looking at
 ramification above $u_q$, and noting that $\rbar$ is unramified at
 $q$, so that $u_q$ is unramified in $\overline{F}^{\ker\breve{\rbar}|_{G_{F_1(\zeta_l)}}}$.]

\end{itemize}

Let $F_2/F_1$ be a finite, soluble, Galois, CM extension linearly
disjoint from $\barF_1^{\ker \Ind_{G_M}^{G_{F_1}} \bartheta} \barF^{\ker \breve{\barr}|_{G_{F_1}}}(\zeta_l)$
over $F_1$ such that
\begin{itemize}
\item $\theta|_{G_{F_2M}}$, $\theta'|_{G_{F_2M}}$ and $\theta''|_{G_{F_2M}}$ are crystalline above $l$ and unramified away from $l$;
\item $MF_2/F_2$ is unramified everywhere.\end{itemize}

Then there is a RAECSDC automorphic representation $(\pi_2,\chi_2)$ of $GL_{n^2}(\A_{F_2})$ such
that
\begin{itemize}
\item $r_{l,\imath}(\pi_2) \cong (r_{l,\imath}(\pi')|_{G_{F_1}} \otimes \Ind_{G_{M}}^{G_{F_1}} \theta)|_{G_{F_2}}$;
\item $r_{l,\imath}(\chi_2)= \tmu \omega_l^{(n-1)m}\epsilon_l^{(n-1)(n-m)}r_{l,\imath}(\chi')\delta_{F_2/F_2^+}$;
\item $\pi_2$ is unramified above $l$ and outside $S$.
\end{itemize}
[The representation $\pi_2$ is the automorphic induction of
$(\pi')_{MF_2}\otimes (\phi |\,\,|^{n(n-1)/2}\circ \det)$
to $F_2$, where $r_{l,\imath}(\phi)= \theta|_{G_{F_2M}}$. The first
two properties are clear.
The third property follows by the choice of $F_2$ and local-global
compatibility (\cite{ana}, \cite{blggtlocalglobalII}).]

Let $\tS$ denote the set of $\tv$ as $v$ runs over $S$, let $S_1$
(resp. $S_2$)
denote the primes of $F_1^+$ (resp. $F_2^+$) above $S$ and
$\tS_1$ (resp. $\tS_2$) the primes of $F_1$ (resp. $F_2$)
above $\tS$. If $v\in S_1$ (resp. $S_2$), let $\tv$ denote the
element of $\tS_1$ (resp. $\tS_2$)
lying above it. For $v\in S_1$ with $v \ndiv l$ (resp. $v|l$) let
$\CC_{1,v}$ denote the unique component of
$R^\Box_{\breve{\barr}|_{G_{F_{1,\tv}}}} \otimes \barQQ_l$
(resp. $R^\Box_{\breve{\barr}|_{G_{F_{1,\tv}}},
  \{0,m,2m,\dots,(n-1)m)\}, \cris} \otimes
\barQQ_l$) containing $r_{l,\imath}(\pi')|_{G_{F_{1,\tv}}}$
(resp. $1\oplus \epsilon_l^{-m}\oplus\cdots\oplus\epsilon_l^{(1-n)m}$).  For $v \in S_2$ with $v \ndiv l$ (resp. $v|l$) let
$\CC_{2,v}$ denote the unique component of
$R^\Box_{\barr_{l,\imath}(\pi_2)|_{G_{F_{2,\tv}}}} \otimes \barQQ_l$
(resp. $R^\Box_{\barr_{l,\imath}(\pi_2)|_{G_{F_{2,\tv}}},
  \{\HT_\tau(r_{l,\imath}(\pi_2)|_{G_{F_{2,\tv}}})\}, \cris} \otimes
\barQQ_l$) containing $r_{l,\imath}(\pi_2)|_{G_{F_{2,\tv}}}$. Choose a
finite extension $L/\Q_l$ in $\barQQ_l$ such that
\begin{itemize}
\item $L$ contains the image of each embedding $F_2 \into \barQQ_l$;
\item $L$ contains the image of $\theta$;
\item $r_{l,\imath}(\pi_2)$ is defined over $L$;
\item each of the components $\CC_{1,v}$ for $v \in S_1$ and $\CC_{2,v}$ for $v \in S_2$ is defined over $L$.
\end{itemize}
Set
\[ s = \Ind_{G_{M^+},G_{M}}^{G_{F_1^+},G_{F_1}, r_{l,\imath}(\chi')\epsilon_l^{1-n}} (\theta'', r_{l,\imath}(\chi') \epsilon_l^{1-n}): G_{F_1^+} \lra \CG_n(\CO_L) \]
in the notation of section 1.1 of \cite{BLGGT} and section 2.1 of \cite{cht}. Thus $\nu \circ s = r_{l,\imath}(\chi')\epsilon_l^{1-n}$.
For $v \in S_1$ (resp. $v \in S_2$) let $\calD_{1,v}$ (resp. $\calD_{2,v}$) denote the deformation problem for $\breve{\barr}|_{G_{F_{1,\tv}}}$ (resp. $\barr_{l,\imath}(\pi_2)|_{G_{F_{2,\tv}}}$) over $\CO_L$ 
corresponding to $\CC_{1,v}$ (resp. $\CC_{2,v}$). Also let 
\[ \CS_1=(F_1/F_1^+,S_1,\tS_1, \CO_L,\barr|_{G_{F_1^+}},\tmu|_{G_{F_2^+}} \omega_l^{(n-1)m}\epsilon_l^{(1-n)m}, \{ \calD_{1,v}\}) \]
and
\[ \CS_2=(F_2/F_2^+,S_2,\tS_2, \CO_L, \tilde{\barr}_{l,\imath}(\pi_2),\tmu|_{G_{F_2^+}} \omega_l^{(n-1)m}\epsilon_l^{(1-n)(m+1)}r_{l,\imath}(\chi')\delta_{F_2/F_2^+}, \{ \calD_{2,v}\}). \]

There is a natural map
\[ R_{\CS_2}^\univ \lra R_{\CS_1}^\univ \]
induced by $r_{\CS_1}^\univ|_{G_{F_2^+}} \otimes s|_{G_{F_2^+}}$. [We must check that if $u \in S_2$ 
then $\breve{r}_\CS^\univ|_{G_{F_{2,\tu}}} \otimes
(\Ind_{G_M}^{G_{F_1}} \theta'')|_{G_{F_{2,\tu}}} \in \calD_{2,u}$. Let
$v=u|_{F^+}$ and let $\rho^\Box_{v,\CC_{1,v}}$ denote the universal lift
of $\barr|_{G_{F_{1,\tv}}}$ to $R_{\CO_L,\breve{\barr}|_{G_{F_{1,\tv}}},\CC_{1,v}}^\Box$.
It suffices to show that $\rho^\Box_{v,\CC_{1,v}}|_{G_{F_{2,\tu}}} \otimes
(\Ind_{G_M}^{G_{F_1}} \theta'')|_{G_{F_{2,\tu}}} \in \calD_{2,u}$. For
this, it suffices to show if $\rho : G_{F_{1,\tv}} \ra
\GL_n(\CO_{\Qlbar})$ is a lift of $\breve{\barr}|_{G_{F_{1,\tv}}}$ lying on
$\CC_{1,v}$, then $\rho|_{G_{F_{2,\tu}}} \otimes (\Ind_{G_M}^{G_{F_1}}
\theta'')|_{G_{F_{2,\tu}}}$ lies on $\CC_{2,u}$. 
If $u|l$, then $\rho|_{G_{F_{2,\tu}}} \sim
(\Ind_{G_M}^{G_{F_1}}\theta)|_{G_{F_{2,\tu}}}$ and $ (\Ind_{G_M}^{G_{F_1}}
\theta'')|_{G_{F_{2,\tu}}} \sim r_{l,\imath}(\pi')|_{G_{F_{2,\tu}}}$   and hence
\[  \rho|_{G_{F_{2,\tu}}} \otimes (\Ind_{G_M}^{G_{F_1}}
\theta'')|_{G_{F_{2,\tu}}} \sim (r_{l,\imath}(\pi'_{F_1})\otimes \Ind_{G_M}^{G_{F_1}}\theta)|_{G_{F_{2,\tu}}}\cong
r_{l,\imath}(\pi_2)|_{G_{F_{2,\tu}}}.\]
If $u\ndiv l$, then by definition $\rho|_{G_{F_{2,\tu}}} \sim
r_{l,\imath}(\pi')|_{G_{F_{2,\tu}}}$.
By the choice of $F_2$ we have
$(\Ind_{G_M}^{G_{F_1}}\theta)|_{G_{F_{2,\tu}}}\sim
(\Ind_{G_M}^{G_{F_1}}\theta'')|_{G_{F_{2,\tu}}}$. Hence
\[  \rho|_{G_{F_{2,\tu}}} \otimes (\Ind_{G_M}^{G_{F_1}}
\theta'')|_{G_{F_{2,\tu}}} \sim (r_{l,\imath}(\pi')\otimes \Ind_{G_M}^{G_{F_1}}\theta)|_{G_{F_{2,\tu}}}\cong
r_{l,\imath}(\pi_2)|_{G_{F_{2,\tu}}}\]
and we are done.]
It follows from
Lemma 1.2.2 of \cite{BLGGT} that this map makes $R_{\CS_1}^\univ$ a
finite $R_{\CS_2}^\univ$-module. By Theorem 2.2.2 of \cite{BLGGT}, $R_{\CS_2}^\univ$ is a finite $\CO_L$-module, and hence $R_{\CS_1}^\univ$ is a finite $\CO_L$-module. On the other hand by Proposition
1.5.1 of \cite{BLGGT}, $R_{\CS_1}^\univ$ has Krull dimension at least
$1$. Hence $\Spec R_{\CS_1}^\univ$ has a $\barQQ_l$-point. This point
gives rise to a lifting $r_1:G_{F_1}\to\GL_n(\Qlbar)$ of
$\rbar|_{G_{F_1}}$ with the following properties:
\begin{itemize}
\item $\nu\circ r_1=\tmu \omega_l^{(n-1)m}\epsilon_l^{(1-n)m}$,
\item $r_1$ is unramified outside $S$,
\item if $u|l$ then $r_1|_{G_{F_{1,u}}}\sim 1\oplus\epsilon_l^{-m}\oplus\cdots\oplus\epsilon_l^{(1-n)m}$.
\end{itemize}
By Theorem 2.2.1 of \cite{BLGGT}, Lemma 1.4 of \cite{blght} and the construction of $r_1$, we also
have that
\begin{itemize}
\item $r_1\otimes(\Ind_{G_M}^{G_{F_1}}\theta'')$ is automorphic of
  level prime to $l$.
\end{itemize}
It follows from Lemma 2.1.1 of \cite{BLGGT} that $r_1$ itself is
automorphic of level prime to $l$, say $r_1\cong r_{l,\imath}(\pi_1)$. By the main result of
\cite{ana}, $\pi_1$ is unramified outside of places lying over $S$,
and by the main result of \cite{blggtlocalglobalII} and Lemma 5.2.1 of
\cite{ger}, we see that $\pi_1$ is
$\imath$-ordinary of level prime to $l$. It then follows from Theorems 2.3.1 and 2.3.2 of
\cite{BLGGT}, which together strengthen Theorem 5.1.1 of \cite{gg},
that we may find a RAECSDC automorphic representation
$(\pi_1',\chi_1')$ of $\GL_n(\A_{F_1})$ such that
\begin{itemize}
\item $(\rbar_{l,\imath}(\pi_1'),r_{l,\imath}(\chibar_1'))\cong (\rbar|_{G_{F_1}},\mubar|_{G_{F_1}})$,
\item $\pi_1'$ is $\imath$-ordinary, unramified at places dividing
  $l$, and unramified outside $S$,
\item  if $u|l$ then $r_{l,\imath}(\pi'_1)|_{G_{F_{1,u}}}\sim 1\oplus\epsilon_l^{-m}\oplus\cdots\oplus\epsilon_l^{(1-n)m}$,
\item $r_{l,\imath}(\chi_1')=\tmu|_{G_{F_1^+}} \omega_l^{(n-1)m}\epsilon_l^{(1-n)(m-1)}$,
\item if $\tu\nmid l$ is a place in $\tS_1$ lying over $v\in S$, then
  $r_{l,\imath}(\pi'_1)|_{G_{F_{1,\tu}}}\sim \rho_v|_{G_{F_{1,\tu}}}$.
\end{itemize}

We now argue in a similar fashion to the above to construct the
sought-after representation $r$.

There is a RAECSDC automorphic representation $(\pi'_2,\chi'_2)$ of $\GL_{n^2}(\A_{F_2})$ such
that
\begin{itemize}
\item $r_{l,\imath}(\pi'_2) \cong (r_{l,\imath}(\pi'_1) \otimes \Ind_{G_{M}}^{G_{F_1}} \theta')|_{G_{F_2}}$;
\item $r_{l,\imath}(\chi'_2)=\mu\tmu \omega_l^{(n-1)m}\epsilon_l^{(1-n)(m+n+1)}\delta_{F_2/F_2^+}$;
\item $\pi'_2$ is unramified above $l$ and outside $S$.
\end{itemize}
[The representation $\pi'_2$ is the automorphic induction of
$(\pi'_1)_{MF_2}\otimes (\phi' |\,\,|^{n(n-1)/2}\circ \det)$
to $F_2$, where $r_{l,\imath}(\phi')= \theta'|_{G_{F_2M}}$. The first
two properties are clear.
The third property follows by the choice of $F_2$ and the fact that
$\pi'_1$ is unramified above $l$ and outside $S$.]

For $v \in S_2$ with $v \ndiv l$ (resp. $v|l$) let $\CC'_{2,v}$ denote
the unique component of
$R^\Box_{\barr_{l,\imath}(\pi'_2)|_{G_{F_{2,\tv}}}} \otimes \barQQ_l$
(resp. $R^\Box_{\barr_{l,\imath}(\pi'_2)|_{G_{F_{2,\tv}}},
  \{\HT_\tau(r_{l,\imath}(\pi'_2)|_{G_{F_{2,\tv}}})\}, \cris} \otimes
\barQQ_l$) containing $r_{l,\imath}(\pi'_2)|_{G_{F_{2,\tv}}}$. Extending $L$ if necessary we may further assume that
\begin{itemize}
\item $L$ contains the image of $\mu$;\item $r_{l,\imath}(\pi'_2)$ is defined over $L$;
\item each of the components $\CC_v$ for $v \in S$ and $\CC'_{2,v}$ for $v \in S_2$ is defined over $L$.
\end{itemize}
Set
\[ s' = \Ind_{G_{M^+},G_{M}}^{G_{F_1^+},G_{F_1},\tmu
  \omega_l^{(n-1)m}\epsilon_l^{(1-n)m}} (\theta,\tmu
\omega_l^{(n-1)m}\epsilon_l^{(1-n)m}): G_{F_1^+} \lra \CG_n(\CO_L) \]
in the notation of section 1.1 of this paper and section 2.1 of
\cite{cht}. Thus $\nu \circ s' =\tmu
\omega_l^{(n-1)m}\epsilon_l^{(1-n)m}$. For $v \in S$ (resp. $v \in
S_2$) let $\calD_v$ (resp. $\calD'_{2,v}$) denote the deformation
problem for $\breve{\barr}|_{G_{F_\tv}}$
(resp. $\barr_{l,\imath}(\pi_2)|_{G_{F_{2,\tv}}}$) over $\CO_L$
corresponding to $\CC_v$ (resp. $\CC'_{2,v}$). Also let
\[ \CS=(F/F^+,S,\tS, \CO_L,\barr, \mu, \{ \calD_v\}) \]
and
\[ \CS'_2=(F_2/F_2^+,S_2,\tS_2, \CO_L, \tilde{\barr}_{l,\imath}(\pi'_2), \mu\tmu|_{G_{F_1^+}} \omega_l^{(n-1)m}\epsilon_l^{(1-n)m} \delta_{F_2^+/F_2}, \{
\calD'_{2,v}\}). \]

As above, there is a natural map
\[ R_{\CS'_2}^\univ \lra R_\CS^\univ \] induced by
$r_\CS^\univ|_{G_{F_2^+}} \otimes s'|_{G_{F_2^+}}$. It follows from
Lemma 1.2.2 of \cite{BLGGT} that this map makes $R_{\CS}^\univ$ a
finite $R_{\CS'_2}^\univ$-module. By Theorem 2.2.2 of \cite{BLGGT},
$R_{\CS'_2}^\univ$ is a finite $\CO_L$-module, and hence
$R_{\CS}^\univ$ is a finite $\CO_L$-module. On the other hand by
Proposition 1.5.1 of \cite{BLGGT}, $R_\CS^\univ$ has Krull dimension
at least $1$. Hence $\Spec R_{\CS}^\univ$ has a
$\barQQ_l$-point. This point gives rise to the desired lifting $r$ of
$\barr$. [To see that $r|_{G_{F'}}$ is automorphic, note that by
Theorem 2.2.1 of \cite{BLGGT},
$(r|_{G_{F_1}}\otimes(\Ind_{G_M}^{G_{F_1}}\theta))|_{G_{F_2}}$ is
automorphic, so by Lemma 1.4 of \cite{blght}
$r|_{G_{F_1}}\otimes(\Ind_{G_M}^{G_{F_1}}\theta)$ is automorphic. It
follows from Lemma 2.1.1 of \cite{BLGGT} that $r|_{G_{F_1}}$ is automorphic, and a
further application of Lemma 1.4 of \cite{blght} shows that
$r|_{G_{F'}}$ is automorphic, as required.]
\end{proof}

\bibliographystyle{amsalpha}
\bibliography{barnetlambgeegeraghty}

\end{document}